\newif \ifJournal \Journalfalse
\DeclareSymbolFontAlphabet{\mathbb}{AMSb}
\DeclareSymbolFontAlphabet{\mathbbl}{bbold}
\DeclareMathAlphabet{\mathbbmsl}{U}{bbm}{m}{sl}
\patchcmd{\ttlh@hang}{\parindent\z@}{\parindent\z@\leavevmode}{}{}
\patchcmd{\ttlh@hang}{\noindent}{}{}{}
\title{Discrete Weber inequalities and related Maxwell compactness for hybrid spaces over polyhedral partitions of domains with general topology}
\author{Simon Lemaire}
\address{Inria, Univ.~Lille, CNRS\\ UMR 8524 -- Laboratoire Paul Painlev\'e\\ 59000 Lille, France\\\email{simon.lemaire@inria.fr}}
\author{Silvano Pitassi}
\address{Inria, Univ.~Lille, CNRS\\ UMR 8524 -- Laboratoire Paul Painlev\'e\\ 59000 Lille, France\\\email{silvano.pitassi@inria.fr}}
\newtheorem{theorem}{Theorem}
\newtheorem{lemma}[theorem]{Lemma}
\newtheorem{proposition}[theorem]{Proposition}
\newtheorem{remark}[theorem]{Remark}
\newcommand{\email}[1]{\href{mailto:#1}{#1}}
\title{Discrete Weber inequalities and related Maxwell compactness for hybrid spaces over polyhedral partitions of domains with general topology}
\author[1]{Simon Lemaire\footnote{\email{simon.lemaire@inria.fr} (corresponding author)}}
\affil[1]{Inria, Univ.~Lille, CNRS, UMR 8524 -- Laboratoire Paul Painlev\'e, 59000 Lille, France}
\author[1]{Silvano Pitassi\footnote{\email{silvano.pitassi@inria.fr}}}
\newlength{\arrow}
\newcommand{\defi}{\mathrel{\mathop:}=}
\newcommand{\ifed}{=\mathrel{\mathop:}}
\newcommand{\st}{~:~}
\renewcommand{\vec}[1]{\boldsymbol{#1}}
\newcommand{\normal}{\vec{n}}
\newcommand{\Natu}{\mathbb{N}}
\newcommand{\Real}{\mathbb{R}}
\newcommand{\Comp}{\mathbb{C}}
\DeclareMathOperator{\Div}{div}
\DeclareMathOperator{\Curl}{\bf curl}
\DeclareMathOperator{\Grad}{\bf grad}
\newcommand{\LL}[1][\Omega]{L^2(#1)}
\newcommand{\bLL}[1][\Omega]{\vec{L}^2(#1)}
\newcommand{\Hdiv}[1][\Omega]{\vec{H}(\Div_{\eta} ; #1)}
\newcommand{\Hzdiv}[1][\Omega]{\vec{H}_0(\Div_{\eta} ; #1)}
\newcommand{\Hdivz}[1][\Omega]{\vec{H}(\Div^0_{\eta} ; #1)}
\newcommand{\Hzdivz}[1][\Omega]{\vec{H}_0(\Div^0_{\eta} ; #1)}
\newcommand{\Hcurl}[1][\Omega]{\vec{H}(\Curl ; #1)}
\newcommand{\Hzcurl}[1][\Omega]{\vec{H}_{\vec{0}}(\Curl ; #1)}
\newcommand{\Hcurlz}[1][\Omega]{\vec{H}(\Curl^{\vec{0}} ; #1)}
\newcommand{\Hzcurlz}[1][\Omega]{\vec{H}_{\vec{0}}(\Curl^{\vec{0}} ; #1)}
\newcommand{\cweber}{\vec{X}(\Omega)}
\newcommand{\cfweber}{\vec{X}_{\vec{\tau}}(\Omega)}
\newcommand{\csweber}{\vec{X}_{n}(\Omega)}
\newcommand{\fharmonic}{\vec{H}_{\vec{\tau}}(\Omega)}
\newcommand{\sharmonic}{\vec{H}_{n}(\Omega)}
\newcommand{\Poly}{\mathcal{P}}
\newcommand{\bPoly}{\vec{\Poly}}
\newcommand{\pGT}[1]{\vec{\mathcal{G}}^{#1}(T)}
\newcommand{\kGT}[1]{\vec{\mathcal{G}}^{{\rm c},#1}(T)}
\newcommand{\pRT}[1]{\vec{\mathcal{R}}^{#1}(T)}
\newcommand{\kRT}[1]{\vec{\mathcal{R}}^{{\rm c},#1}(T)}
\newcommand{\pRF}[1]{\vec{\mathcal{R}}^{#1}(F)}
\newcommand{\kRF}[1]{\vec{\mathcal{R}}^{{\rm c},#1}(F)}
\newcommand{\qRF}[1]{\vec{\mathcal{Q}}^{#1}(F)}
\newcommand{\Hindex}{\mathcal{H}}
\renewcommand{\t}{\mathcal{T}}
\newcommand{\f}{\mathcal{F}}
\newcommand{\dweber}{\underline{\mathbbmsl{X}}_h^{\ell}}
\newcommand{\dweberT}{\underline{\mathbbmsl{X}}_T^{\ell}}
\newcommand{\dfweber}{\underline{\mathbbmsl{X}}_{h,\vec{\tau}}^{\ell}}
\newcommand{\dsweber}{\underline{\mathbbmsl{X}}_{h,n}^{\ell}}
\newcommand{\dweberc}{\underline{\vec{X}}_h^{\ell}}
\newcommand{\dfweberc}{\underline{\vec{X}}_{h,\vec{\tau}}^{\ell}}
\newcommand{\vh}{\underline{\mathbbmsl{v}}_h}
\newcommand{\vhc}{\underline{\vec{v}}_h}
\newcommand{\vTh}{\vec{v}_h}
\newcommand{\vTF}{\underline{\mathbbmsl{v}}_T}
\newcommand{\vT}{\vec{v}_T}
\newcommand{\vFt}{\vec{v}_{F,\vec{\tau}}}
\newcommand{\vFn}{[\eta v]_{F,n}}
\newcommand{\wTF}{\underline{\mathbbmsl{w}}_T}
\newcommand{\wFt}{\vec{w}_{F,\vec{\tau}}}
\newcommand{\wFn}{[\eta w]_{F,n}}
\newcommand{\DpT}{D_T^p}
\newcommand{\Dph}{D_h^p}
\newcommand{\CmT}{\vec{C}_T^m}
\newcommand{\Cmh}{\vec{C}_h^m}
\begin{document}

\maketitle

\ifJournal

\begin{history}
  \received{(26 October 2020)}
  \revised{(14 June 2021)}
  \accepted{(22 October 2021)}
  \comby{(JinChao Xu)}
\end{history}
\begin{abstract}
  We prove discrete versions of the first and second Weber inequalities on $\vec{H}(\Curl)\cap\vec{H}(\Div_{\eta})$-like hybrid spaces spanned by polynomials attached to the faces and to the cells of a polyhedral mesh. The proven hybrid Weber inequalities are optimal in the sense that (i) they are formulated in terms of $\vec{H}(\Curl)$- and $\vec{H}(\Div_{\eta})$-like hybrid semi-norms designed so as to embed optimally (polynomially) consistent face penalty terms, and (ii) they are valid for face polynomials in the smallest possible stability-compatible spaces. Our results are valid on domains with general, possibly non-trivial topology. In a second part we also prove, within a general topological setting, related discrete Maxwell compactness properties.
\end{abstract}
\keywords{Weber inequalities; Maxwell compactness; Hybrid polynomial spaces; Polyhedral meshes; de Rham cohomology; Topology; Maxwell's equations.}
\ccode{AMS Subject Classification 2020: 65N12, 14F40, 35Q60.}

\else

\begin{abstract}
  We prove discrete versions of the first and second Weber inequalities on $\vec{H}(\Curl)\cap\vec{H}(\Div_{\eta})$-like hybrid spaces spanned by polynomials attached to the faces and to the cells of a polyhedral mesh. The proven hybrid Weber inequalities are optimal in the sense that (i) they are formulated in terms of $\vec{H}(\Curl)$- and $\vec{H}(\Div_{\eta})$-like hybrid semi-norms designed so as to embed optimally (polynomially) consistent face penalty terms, and (ii) they are valid for face polynomials in the smallest possible stability-compatible spaces. Our results are valid on domains with general, possibly non-trivial topology. In a second part we also prove, within a general topological setting, related discrete Maxwell compactness properties.
  \medskip\\
  \textbf{Keywords:} Weber inequalities; Maxwell compactness; Hybrid polynomial spaces; Polyhedral meshes; de Rham cohomology; Topology; Maxwell's equations.
  \smallskip\\
  \textbf{AMS Subject Classification 2020:} 65N12, 14F40, 35Q60.
\end{abstract}

\fi

\section{Introduction}

Let $\Omega$ be a domain in $\Real^3$, i.e.~a bounded and connected Lipschitz open set of $\Real^3$.
Our main motivation in this work comes from the study of time-harmonic Maxwell's equations. We are interested in PDE models, posed in $\Omega$, for which the main (vector) unknown $\vec{u}$ lies in a subspace $\vec{X}_\star(\Omega)$ of the space
$$\cweber\defi\Hcurl\cap\Hdiv,$$
where $\eta$ is a physical parameter (possibly varying inside $\Omega$) and $\Div_\eta\defi\Div(\eta\,\cdot)$. In practice, the unknown $\vec{u}$ is the (vector) complex amplitude of a time-harmonic electromagnetic field, which may either be the electric field $\vec{e}$ (then $\eta$ is the electric permittivity $\varepsilon$), or the magnetic field $\vec{h}$ (then $\eta$ is the magnetic permeability $\mu$), or some vector potential $\vec{a}$ (then $\eta\equiv 1$ if one adopts the Coulomb gauge). Vector fields in $\vec{X}_\star(\Omega)$ bear, almost everywhere on the boundary of $\Omega$, either zero tangential or zero $\eta$-weighted normal trace. When zero tangential trace is imposed on the whole boundary of $\Omega$, we let $\vec{X}_\star(\Omega)\ifed\cfweber$, and we refer to this case as fully tangential. In the same vein, when zero $\eta$-weighted normal trace is imposed on the whole boundary of $\Omega$, we let $\vec{X}_\star(\Omega)\ifed\csweber$, and we refer to this case as fully normal. All other cases are referred to as mixed.

It is nowadays well-understood that the well-posedness of variational problems set in $\vec{X}_\star(\Omega)$ is topology-depending, and directly relates to the (co)homology of the underlying de Rham complex.
For standard (i.e.~either fully tangential, or fully normal) boundary conditions, the homology of the de Rham complex has been extensively studied in the literature, and the homology spaces, spanned by harmonic vector fields, have been fully characterized~\cite{GiRav:86,DaLio:90,ABDGi:98,GroKo:04}. Based on the latter characterizations, topologically general Poincar\'e-type inequalities, the so-called {\em Weber inequalities}, can be established. These inequalities are named after Christian Weber, in relation to his seminal contributions to the topic~\cite{Weber:80}. The first Weber inequality corresponds to the fully tangential case of vector fields in $\cfweber$, whereas the second corresponds to the fully normal case (of vector fields in $\csweber$). Whenever $\Omega$ has trivial topology (think of a contractible domain), letting $\|{\cdot}\|_0$ indifferently denote the $\{\LL,\bLL\}$-norm, the first and second Weber inequalities read:
\begin{equation} \label{eq:example}
  \|\vec{v}\|_0\leq C_{\eta,\star}\big(\|\Div(\eta\vec{v})\|_0+\|\Curl\vec{v}\|_0\big)\qquad\forall\vec{v}\in\vec{X}_\star(\Omega),
\end{equation}
for a real number $C_{\eta,\star}>0$ only depending on $\Omega$, $\eta$, and on the boundary conditions type.
Weber inequalities are closely related to the compact embedding of $\vec{X}_\star(\Omega)$ into $\bLL$, also known in the literature under the name of {\em Maxwell compactness} property (see e.g.~\cite{BaPaS:16}). Weber inequalities must not be confused with Gaffney inequalities, which are related to the continuous embedding of $\vec{X}_\star(\Omega)$ into $\vec{H}^1(\Omega)$, only valid for smooth $\eta$ and smooth or convex domains (see~\cite[Sect.~2.3]{ABDGi:98} for further details). On general Lipschitz domains $\Omega$, even under the additional assumption that $\Omega$ is a polyhedron, one can solely prove that $\vec{X}_\star(\Omega)$ is continuously embedded (i) into $\vec{H}^{\frac12+s}(\Omega)$ for some $s>0$ if $\eta$ is globally smooth (cf.~\cite{Costa:90} and~\cite[Prop.~3.7]{ABDGi:98}), or (ii) into $\vec{H}^s(\Omega)$ (still for some $s>0$) if $\eta$ is piecewise smooth (see~\cite{CoDaN:99,BGLud:13}). When departing from standard boundary conditions, the (co)homology of the de Rham complex can be more complicated. For mixed boundary conditions, the characterization of the homology spaces has been investigated in~\cite{FerGi:97}, $\vec{H}^s(\Omega)$-regularity for piecewise smooth $\eta$ proven in~\cite{Jochm:99}, and Maxwell compactness in~\cite{Jochm:97}.  

Let us briefly comment at this point on the terminology. We endow $\cweber$ with its standard norm $\|\vec{v}\|_{\vec{X}}^2\defi\left(\|\vec{v}\|_0^2+\|\Curl\vec{v}\|_0^2\right)+\left(\|\vec{v}\|_0^2+\|\Div(\eta\vec{v})\|_0^2\right)$.
From an historical perspective, what Weber actually proved in~\cite{Weber:80} (based on a preliminary work by Weck~\cite{Weck:74}) are not exactly inequalities of the form~\eqref{eq:example}, but rather Maxwell compactness properties in $\left(\vec{X}_\star(\Omega),\|{\cdot}\|_{\vec{X}}\right)$. Whereas the $\|{\cdot}\|_{\vec{X}}$-norm embeds an $\bLL$-norm control on the function, at the opposite, remark that such a control is absent from the right-hand side of~\eqref{eq:example}. To prove Poincar\'e-type inequalities of the form~\eqref{eq:example} from Maxwell compactness, we know (by Peetre--Tartar's lemma) that it is sufficient to identify, for some Banach space $\vec{\mathcal{Y}}$, an injective bounded linear mapping $\vec{\mathcal{A}}:\left(\vec{X}_\star(\Omega),\|{\cdot}\|_{\vec{X}}\right)\to\left(\vec{\mathcal{Y}},\|{\cdot}\|_{\vec{\mathcal{Y}}}\right)$ such that
$$\|\vec{v}\|_{\vec{X}}\leq C_1\big(\|\vec{\mathcal{A}}(\vec{v})\|_{\vec{\mathcal{Y}}}+\|\vec{v}\|_0\big)\qquad\forall\vec{v}\in\vec{X}_\star(\Omega).$$
It then follows that
$$\|\vec{v}\|_0\leq\|\vec{v}\|_{\vec{X}}\leq C_2\|\vec{\mathcal{A}}(\vec{v})\|_{\vec{\mathcal{Y}}}\qquad\forall\vec{v}\in\vec{X}_\star(\Omega).$$
For instance, when $\Omega$ has trivial topology, the mapping
$$\vec{\mathcal{A}}:\vec{X}_\star(\Omega)\to\vec{\mathcal{Y}}\defi\LL\times\bLL;\vec{v}\mapsto\big(\Div(\eta\vec{v}),\Curl\vec{v}\big)$$
satisfies all the required properties, whence the inequality~\eqref{eq:example}. Within a general topological setting, identifying $\vec{\mathcal{A}}$ crucially requires a precise characterization of the (co)homology spaces of the de Rham complex. Such a characterization has not been provided by Weber in his seminal work, but by different authors later on (cf.~e.g.~\cite{GiRav:86,DaLio:90,ABDGi:98,GroKo:04}). In this respect, naming Poincar\'e-type inequalities of the form~\eqref{eq:example} ``Weber inequalities'' is imperfect. However, considering that Weber was the first key player in their discovery, and following~\cite{ACJLa:18}, we will adopt this terminology throughout this article.

\medskip

The focus in this work is on (arbitrary-order) {\em skeletal methods} over polyhedral partitions. Contrary to plain vanilla Discontinuous Galerkin (DG) methods, skeletal methods attach unknowns to the mesh skeleton, thus allowing for the static condensation of (potential) cell unknowns (cf.~\cite{Lemai:21} for further insight).
In what follows, for $\mathcal{X}(\Omega)$ a given functional space on $\Omega$, we will term $\mathcal{X}(\Omega)$-conforming any skeletal method attaching unknowns to the boundary of the cells so as to mimic the ``continuity'' properties of the underlying continuous space $\mathcal{X}(\Omega)$. For instance, in the lowest-order case, $H^1(\Omega)$-conforming methods attach unknowns to the mesh vertices so as to emulate (full) continuity of traces, whereas $\Hcurl$-conforming methods attach unknowns to the mesh edges so as to reproduce tangential continuity of (vector) traces. As opposed, we will term non-conforming any other kind of skeletal method. In the polyhedral context, this dichotomy is particularly convenient to discriminate between the approaches, for which, anyhow at the end, only fully discontinuous polynomial proxies are computable.
We exclusively focus in this work on non-conforming (skeletal) methods, often referred to in the literature as {\em hybrid methods}. Hybrid methods only attach unknowns to the faces and to the cells of the mesh.
Examples of such approaches include the Hybridizable Discontinuous Galerkin (HDG)~\cite{CoGoL:09}, the Weak Galerkin (WG)~\cite{WanYe:13}, the Hybrid High-Order (HHO)~\cite{DPErn:15,DPELe:14}, or the non-conforming Virtual Element (nc-VE)~\cite{AdDLM:16} methods.
All these technologies share tight links, as first observed in~\cite{CDPEr:16}.
At the opposite side of the skeletal spectrum lie conforming polyhedral methods, whose salient examples include the conforming Virtual Element (c-VE)~\cite{BdVBC:13} or the Discrete De Rham (DDR)~\cite{DPDro:23} approaches, between which strong connections also exist.

In this work, we aim at deriving, within a general topological setting, discrete functional analysis tools for hybrid approximations of div-curl systems over polyhedral mesh families.
To our knowledge, hybrid methods have only been studied yet within a trivial topological setting.
In the HDG and WG contexts, these contributions respectively include~\cite{NPCoc:11,CQSSo:17,CCuXu:19} (cf.~also~\cite{DuSay:20}) and~\cite{MWYZh:15}. In the HHO context, a discrete (first) Weber inequality was proven in~\cite{CDPLe:22} for $\vec{H}(\Curl)$-like hybrid spaces satisfying a discrete divergence-free constraint in trivial domains, and leveraged to perform the analysis of HHO approximations of first- and second-order magnetostatics models.
In this paper, building on~\cite{CDPLe:22}, we prove discrete versions of the first and second Weber inequalities on generic $\vec{H}(\Curl)\cap\vec{H}(\Div_\eta)$-like hybrid spaces, which are valid for domains with general, possibly non-trivial topology.
We also establish the related discrete Maxwell compactness properties.
The results are valid for complex-valued fields and piecewise constant (over the polyhedral partition), real-valued, isotropic parameters $\eta$.
The results seamlessly extend to the case of complex-valued, Hermitian anisotropic parameters.
The hybrid Weber inequalities we prove are optimal in the following sense:
\begin{itemize}
  \item[(i)] they are formulated in terms of $\vec{H}(\Curl)$- and $\vec{H}(\Div_{\eta})$-like hybrid semi-norms designed so as to embed optimally (polynomially) consistent face penalty terms;
  \item[(ii)] they are valid for face polynomials in the smallest possible stability-compatible spaces.
\end{itemize}
The first property above guarantees that the corresponding stabilization Hermitian forms, of the so-called Lehrenfeld--Sch\"oberl type~\cite{LeSch:16} (cf.~also~\cite{CEPig:21} for further insight), shall provide superconvergence when used within hybrid numerical schemes (cf.~\cite{CQSSo:17,CDPLe:22}).
The second property, in turn, allows for substantial savings when considering the approximation of first-order systems, for which discrete differential operators need not be reconstructed (cf.~\cite[Sect.~3.1]{CDPLe:22}).
More generally, our work paves the way to the analysis of hybrid polyhedral approximations of general div-curl systems on domains with arbitrary topology, including non-linear (Hilbertian) problems as they may arise e.g.~in the modeling of ferromagnetic materials.

Let us finally relate our work to the so-called compatible approaches.
In the Finite Element (FE) context over matching tetrahedral partitions, N\'ed\'elec elements~\cite{Nedel:80,Nedel:86} can be used to define discrete (polynomial) de Rham complexes~\cite{Bossa:98,Monk:03}. Through the prism of Whitney forms~\cite{Whitn:57} and, more broadly, of the Finite Element Exterior Calculus (FEEC)~\cite{AFaWi:06,Arnol:18}, the notion of discrete de Rham complex can be extended from the framework of vector calculus to the one of exterior calculus over differential forms. The corresponding numerical methods then lie in an analytical setting which is referred to as {\em compatible}.
In a compatible FE setting, Poincar\'e-type inequalities are straightforward consequences of the (co)homological properties of the underlying polynomial de Rham complex. In the polyhedral context, following the DDR approach~\cite{DPDro:23,BDPDH:23}, discrete de Rham complexes can also be written and analyzed, but at the price of some complications. These complications include the use of discrete (reconstructed) differential operators over algebraic spaces of discrete unknowns, and the tedious tracking in the analysis of mesh-dependent quantities.
Notice that all the above-mentioned compatible approaches are related to conforming (FE or polyhedral) methods.
For non-conforming methods, the only contributions we are aware of towards the developement of a compatible analytical setting are~\cite{Licht:17,ChLic:20} (based on seminal ideas from~\cite{BraSc:08}), in which the convenient notion of discrete distributional differential form is introduced. The framework considered in~\cite{Licht:17,ChLic:20} is the one of DG spaces over matching simplicial meshes.
The subject thus still seems largely open for extending such ideas to hybrid spaces over polyhedral partitions.
We believe our work may constitute, in the language of vector calculus, a first step in this direction.

\medskip

The rest of the article is organized as follows. In Section~\ref{se:prelim}, after introducing both the topological and functional frameworks, we state and prove, based on ad hoc Helmholtz--Hodge decompositions, the continuous first and second Weber inequalities in $\Hcurl\cap\Hdiv$, as well as the related Maxwell compactness properties. In Section~\ref{se:dis.set} we introduce the discrete setting, and define the notions of polyhedral mesh family, face and cell polynomial decompositions, and $\vec{H}(\Curl)\cap\vec{H}(\Div_\eta)$-like hybrid spaces. Section~\ref{se:weber} is dedicated to the statements and proofs of the first and second hybrid Weber inequalities. The related discrete Maxwell compactness properties are, in turn, established in Section~\ref{se:maxcom}. Finally, Appendix~\ref{ap:nim} collects a connected technical result.  


\section{Preliminaries} \label{se:prelim}

\subsection{Topological framework}

Let $\Omega$ be a domain in $\Real^3$, i.e.~a bounded and connected (strongly) Lipschitz open set of $\Real^3$. We let $\Gamma\defi\partial\Omega$ denote the boundary of $\Omega$. By the Rademacher theorem, one can define almost everywhere on $\Gamma$ a unit vector field $\normal$ normal to $\Gamma$, which we further assume to point outward from $\Omega$. We recall that the Betti numbers $\beta_0$, $\beta_1$ and $\beta_2$ of $\Omega$ respectively denote the number of (maximally) connected components of $\Omega$ (here, $\beta_0=1$), the number of tunnels crossing through $\Omega$ ($\beta_1\in\Natu$), and the number of voids enclosed in $\Omega$ ($\beta_2\in\Natu$). For instance, for $\Omega$ simply-connected, $\beta_1=0$. In the same manner, when the boundary $\Gamma$ of $\Omega$ is connected, $\beta_2=0$. In the following, when both $\beta_1$ and $\beta_2$ are equal to zero, we say that $\Omega$ has {\em trivial topology}.

When $\beta_1>0$, we make the following classical assumption: there exist $\beta_1$ non-intersecting, orientable, two-dimensional manifolds $\Sigma_1,\ldots,\Sigma_{\beta_1}$ with boundary, called {\em cutting surfaces}, satisfying $\partial\Sigma_i\subset\Gamma$ for all $i\in\{1,\ldots,\beta_1\}$, such that the open set $\hat{\Omega}\defi\Omega\setminus\cup_{i\in\{1,\ldots,\beta_1\}}\Sigma_i$ is not crossed by any tunnel (its first Betti number is then zero). Remark that $\partial\hat{\Omega}=\Gamma\cup\cup_{i\in\{1,\ldots,\beta_1\}}\Sigma_i$. We will assume in what follows that $\hat{\Omega}$ is connected (which is generally the case starting from a connected domain $\Omega$), and that the cutting surfaces are sufficiently regular so that the set $\hat{\Omega}$ is pseudo-Lipschitz (cf.~e.g.~\cite[Def.~3.2.2]{ACJLa:18}). Since the cutting surfaces $\Sigma_i$ are orientable, one can then define almost everywhere on $\Sigma_i$, for any $i\in\{1,\ldots,\beta_1\}$, a unit vector field $\normal_{\Sigma_i}$ normal to $\Sigma_i$, whose orientation is arbitrary but prescribed once and for all.

When $\beta_2>0$, letting $\Gamma_0$ be the (connected) boundary of the only unbounded component of the exterior open set $\Real^3\setminus\overline{\Omega}$, there exist $\beta_2$ (maximally) connected components $\Gamma_1,\ldots,\Gamma_{\beta_2}$ of $\Gamma$ such that $\Gamma=\cup_{j\in\{0,\ldots,\beta_2\}}\Gamma_j$. When $\beta_2=0$, there holds $\Gamma=\Gamma_0$.

\subsection{Functional setting} \label{sse:fun.set}

Let $\eta:\Omega\to\Real$ be a given function satisfying, for real numbers $0<\eta_\flat\leq\eta_\sharp<\infty$,
\begin{equation} \label{eq:eta.c}
  \eta_\flat\leq\eta(\vec{x})\leq\eta_\sharp\qquad\text{for a.e.}~\vec{x}\in\Omega.
\end{equation}
We also introduce $\kappa_\eta\defi\eta_\sharp/\eta_\flat\geq 1$ the (global) heterogeneity ratio of the parameter $\eta$. 

For $m\in\{2,3\}$, and for $X$ an $m$-dimensional, (relatively) open Lipschitz subset of $\overline{\Omega}$, we let $\LL[X]$ (respectively, $\bLL[X]$) denote the Lebesgue space of $\Comp$-valued functions (respectively, $\Comp^m$-valued vector fields) with square-integrable modulus on $X$. The standard Hermitian inner products (and norms) in $\LL[X]$ and $\bLL[X]$ are irrespectively denoted by $(\mathfrak{f},\mathfrak{g})_X\defi\int_X\mathfrak{f}{\cdot}\overline{\mathfrak{g}}$ (and $\|{\cdot}\|_{0,X}\defi\sqrt{({\cdot},{\cdot})_X}$, with the convention that $\|{\cdot}\|_{0}\defi\|{\cdot}\|_{0,\Omega}$). We also define
$$L^2_0(\Omega)\defi\left\{v\in\LL\mid\int_\Omega v=0\right\}\qquad\text{and}\qquad\vec{L}^2_{\vec{0}}(\Omega)\defi\left\{\vec{v}\in\bLL\mid\int_\Omega\vec{v}=\vec{0}\right\},$$
as well as the space $\vec{L}^2_\eta(\Omega)\defi\left(\bLL,(\eta\,\cdot,{\cdot})_{\Omega}\right)$ (remark that the norm $\|\eta^{\frac12}{\cdot}\|_0$ is equivalent, by~\eqref{eq:eta.c}, to the norm $\|{\cdot}\|_0$ in $\bLL$). For $s\in\Natu^\star$, we let $H^s(X)$ (respectively, $\vec{H}^s(X)$) denote the Sobolev space of $\Comp$-valued functions in $\LL[X]$ (respectively, $\Comp^m$-valued vector fields in $\bLL[X]$) whose partial weak derivatives of order up to $s$ have square-integrable modulus on $X$. The standard norms (and semi-norms) in $H^s(X)$ and $\vec{H}^s(X)$ are irrespectively denoted by $\|{\cdot}\|_{s,X}$ (and $|{\cdot}|_{s,X}$), with the convention that $\|{\cdot}\|_s\defi\|{\cdot}\|_{s,\Omega}$ (and $|{\cdot}|_s\defi|{\cdot}|_{s,\Omega}$). We also define $H^1_0(\Omega)\defi\{v\in H^1(\Omega)\mid v_{\mid\Gamma}=0\}$.

Let $Y$ be a three-dimensional, open Lipschitz subset of $\Omega$. Classically, we let
\begin{align*}
  \Hcurl[Y]&\defi\big\{\vec{v}\in\bLL[Y]\mid\Curl\vec{v}\in\bLL[Y]\big\},\\
  \Hdiv[Y]&\defi\big\{\vec{v}\in\bLL[Y]\mid\Div(\eta\vec{v})\in\LL[Y]\big\},
\end{align*}
as well as their subspaces
\begin{align*}
  \Hcurlz[Y]&\defi\big\{\vec{v}\in\Hcurl[Y]\mid\Curl\vec{v}\equiv\vec{0}\big\},\\
  \Hdivz[Y]&\defi\big\{\vec{v}\in\Hdiv[Y]\mid\Div(\eta\vec{v})\equiv 0\big\}.
\end{align*}
The 
spaces $\Hcurl[Y]$ and $\Hdiv[Y]$ are endowed with the following weighted norms:
$$\|{\cdot}\|_{\Curl,Y}^2\defi\|\eta^{\frac12}{\cdot}\|_{0,Y}^2+\eta_\sharp\|{\Curl\cdot}\|_{0,Y}^2,\qquad\|{\cdot}\|_{\Div,Y}^2\defi\|\eta^{\frac12}{\cdot}\|_{0,Y}^2+\eta_\flat^{-1}\|{\Div(\eta\,\cdot)}\|_{0,Y}^2,$$
with the convention that $\|{\cdot}\|_{\Curl}\defi\|{\cdot}\|_{\Curl,\Omega}$ and $\|{\cdot}\|_{\Div}\defi\|{\cdot}\|_{\Div,\Omega}$.
Let $\normal_{\partial Y}$ denote the unit outward normal vector field to $\partial Y$, defined almost everywhere on $\partial Y$.
For $\vec{v}\in\Hdiv[Y]$, one can give a sense to the normal trace of $\eta\vec{v}$ on $\partial Y$, denoted $(\eta\vec{v})_{\mid\partial Y}{\cdot}\normal_{\partial Y}$, as an element of $H^{-\frac12}(\partial Y)$ (space of bounded antilinear forms on $H^{\frac12}(\partial Y)$). In addition, the mapping $\vec{v}\mapsto(\eta\vec{v})_{\mid\partial Y}{\cdot}\normal_{\partial Y}$ is continuous from $\Hdiv[Y]$ to $H^{-\frac12}(\partial Y)$.
Likewise, for $\vec{v}\in\Hcurl[Y]$, one can give a sense to the rotated tangential trace of $\vec{v}$ on $\partial Y$, denoted $\vec{v}_{\mid\partial Y}{\times}\normal_{\partial Y}$, as an element of $H^{-\frac12}(\partial Y)^3$ (space of bounded antilinear forms on $H^{\frac12}(\partial Y)^3$), and the mapping $\vec{v}\mapsto\vec{v}_{\mid\partial Y}{\times}\normal_{\partial Y}$ is continuous from $\Hcurl[Y]$ to $H^{-\frac12}(\partial Y)^3$.
\begin{remark}[Tangential vector fields] \label{re:tvf}
  Remark that, with our above choice of notation, $\bLL[\partial Y]=\LL[\partial Y]^2$. Define $\vec{H}^{\frac12}(\partial Y)\defi H^{\frac12}(\partial Y)^2$ and $\vec{H}^{-\frac12}(\partial Y)\defi H^{-\frac12}(\partial Y)^2$. Letting $H_{\vec{x}}$ denote the tangent hyperplane to $\partial Y$ at point $\vec{x}$, and considering $\vec{\tau}_1(\vec{x}),\vec{\tau}_2(\vec{x})\subset H_{\vec{x}}$ two (orthonormal) vectors such that $\big(\vec{\tau}_1(\vec{x}),\vec{\tau}_2(\vec{x}),\normal_{\partial Y}(\vec{x})\big)$ forms a right-handed orthonormal basis of $\Real^3$, one can identify $(\vec{v}_{\mid\partial Y}{\times}\normal_{\partial Y})(\vec{x})$ with the vector of $\Comp^2$ of its local coordinates in the basis $\big(\vec{\tau}_1(\vec{x}),\vec{\tau}_2(\vec{x})\big)$ of $H_{\vec{x}}$.
With a slight abuse of notation, one can then write that $\vec{v}_{\mid\partial Y}{\times}\normal_{\partial Y}\in\vec{H}^{-\frac12}(\partial Y)$. We will extensively make use of this observation in the sequel.
\end{remark}
\noindent
In what follows, the (sesquilinear) duality pairings between $H^{-\frac12}(\partial Y)$ and $H^{\frac12}(\partial Y)$ on the one side, and between $\vec{H}^{-\frac12}(\partial Y)$ and $\vec{H}^{\frac12}(\partial Y)$ on the other side, are irrespectively denoted by $\langle{\cdot},{\cdot}\rangle_{\partial Y}$.
Whenever $\vec{v}\in\vec{H}^1(Y)\subset\Hcurl[Y]\cap\vec{H}(\Div;Y)$ with $\vec{H}(\Div;Y)\defi\vec{H}(\Div_1;Y)$, for almost every $\vec{x}\in\partial Y$,
$$\vec{v}_{\mid\partial Y}(\vec{x})=(\vec{v}_{\mid\partial Y}{\cdot}\normal_{\partial Y})(\vec{x})\normal_{\partial Y}(\vec{x})+\normal_{\partial Y}(\vec{x}){\times}(\vec{v}_{\mid\partial Y}{\times}\normal_{\partial Y})(\vec{x}).$$
In this case, $\vec{v}_{\mid\partial Y}{\cdot}\normal_{\partial Y}\in\LL[\partial Y]$ and $\vec{v}_{\mid\partial Y}{\times}\normal_{\partial Y}\in\bLL[\partial Y]$.

We finally introduce the following subspaces of $\Hcurl$ and $\Hdiv$:
$$\Hzcurl\defi\left\{\vec{v}\in\Hcurl\mid\vec{v}_{\mid\Gamma}{\times}\normal\equiv\vec{0}\right\}$$
and $\Hzcurlz\defi\Hzcurl\cap\Hcurlz$, as well as
$$\Hzdiv\defi\left\{\vec{v}\in\Hdiv\mid(\eta\vec{v})_{\mid\Gamma}{\cdot}\normal\equiv 0\right\}$$
and $\Hzdivz\defi\Hzdiv\cap\Hdivz$.

Assume that the first Betti number $\beta_1$ of $\Omega$ is positive. Then, for $v\in\LL[\hat{\Omega}]$ (resp.~$\vec{v}\in\bLL[\hat{\Omega}]$), we denote by $\check{v}$ (resp.~$\check{\vec{v}}$) its continuation to $\LL$ (resp.~$\bLL$). Also, for each cutting surface $\Sigma_i$ of $\Omega$, $i\in\{1,\ldots,\beta_1\}$, we associate the superscript $+$ to the side of $\hat{\Omega}$ (with respect to $\Sigma_i$) for which $\normal_{\Sigma_i}$ is outward, and the superscript $-$ to the side of $\hat{\Omega}$ for which $\normal_{\Sigma_i}$ is inward. Let $v$ be some function defined on $\hat{\Omega}$ and, with obvious notation, let $v^+_{\mid\Sigma_i}$ and $v^-_{\mid\Sigma_i}$ denote its two traces on $\Sigma_i$ defined (if need be, in a weak sense) from both sides of $\hat{\Omega}$. Then, the jump of $v$ on $\Sigma_i$ is defined by
\begin{equation} \label{eq:jump}
  \llbracket v\rrbracket_{\Sigma_i}\defi v^+_{\mid\Sigma_i}-v^-_{\mid\Sigma_i}.
\end{equation}                         

\subsection{Helmholtz--Hodge decompositions}

We introduce in this section two Helmholtz--Hodge decompositions for vector fields in $\bLL$.

\subsubsection{First Helmholtz--Hodge decomposition}

Let us introduce the following space of harmonic vector fields:
\begin{equation} \label{eq:fharmonic}
  \fharmonic\defi\Hzcurlz\cap\Hdivz.
\end{equation}
By~\cite[Prop.~3.3.10]{ACJLa:18}, the harmonic space $\fharmonic$ has dimension $\beta_2$, and a basis for $\fharmonic$ is given by $\big(\Grad\omega_j\big)_{j\in\{1,\ldots,\beta_2\}}$, where, for $j\in\{1,\ldots,\beta_2\}$, $\omega_j \in H^1(\Omega)$ is the unique solution to
\begin{subequations}
\label{eq:harmonic1}
  \begin{alignat}{2}
    -\Div (\eta\Grad \omega_j) &= 0 &\qquad&\text{in $\Omega$},
	\\
    \omega_j  &=  0 &\qquad&\text{on $\Gamma\setminus\Gamma_j$},
    \\
    \omega_j  &=  1 &\qquad&\text{on $\Gamma_j$}.
  \end{alignat}
\end{subequations}
Remark that the $\omega_j$'s are real-valued (since $\overline{\omega_j}$ also solves the uniquely solvable Problem~\eqref{eq:harmonic1}), and
$$\mathfrak{C}_{j'j}\defi\big(\eta\Grad\omega_j,\Grad\omega_{j'}\big)_{\Omega}=\big\langle(\eta\Grad\omega_j)_{\mid\Gamma_{j'}}{\cdot}\normal,1\big\rangle_{\Gamma_{j'}},$$
which implies that $\Grad\omega_j$ and $\Grad\omega_{j'}$ are not $\vec{L}^2_\eta(\Omega)$-orthogonal in general. The real-valued matrix $\mathfrak{C}\in\Real^{\beta_2\times\beta_2}$ is called {\em capacitance matrix}, and is symmetric positive-definite (cf.~\cite[Cor.~3.3.8]{ACJLa:18}). For $\vec{w}\in\fharmonic$ writing $\vec{w}=\Grad\omega$, with $\omega\in H^1(\Omega)$ such that $\omega\defi\sum_{j=1}^{\beta_2}\alpha_{j}\omega_{j}$, defining the vectors $\vec{\alpha}_{\vec{w}}\defi(\alpha_{j}\in\Comp)_{j\in\{1,\ldots,\beta_2\}}$ and $\vec{\beta}_{\vec{w}}\defi(\langle(\eta\vec{w})_{\mid\Gamma_{j'}}{\cdot}\normal,1\rangle_{\Gamma_{j'}}\in\Comp)_{j'\in\{1,\ldots,\beta_2\}}$, there holds
\begin{equation} \label{eq:capac}
  \vec{\alpha}_{\vec{w}}=\mathfrak{C}^{-1}\vec{\beta}_{\vec{w}}.
\end{equation}
As a consequence, $\vec{w}\equiv\vec{0}$ if and only if $\langle(\eta\vec{w})_{\mid\Gamma_{j'}}{\cdot}\normal,1\rangle_{\Gamma_{j'}}=0$ for all $j'\in\{1,\ldots,\beta_2\}$ (in fact, for all $j'\in\{0,\ldots,\beta_2\}$ since $\vec{w}\in\Hdivz$).

Building on~\cite[Prop.~3.7.1]{ACJLa:18}, and using~\cite[Thm.~3.4.1]{ACJLa:18}, we infer the following first $\vec{L}^2_\eta(\Omega)$-orthogonal Helmholtz--Hodge decomposition:
\begin{equation} \label{eq:helm1}
  \bLL=\Grad\big(H^1_0(\Omega)\big)\overset{\perp_\eta}{\oplus}\frac{\eta_\sharp}{\eta}\Curl\big(\vec{H}^1(\Omega)\cap\vec{L}^2_{\vec{0}}(\Omega)\big)\overset{\perp_\eta}{\oplus}\fharmonic.
\end{equation}
Furthermore, by~\cite[Thm.~3.4.1]{ACJLa:18}, we know that there is $C_{\Omega,1}>0$ such that $\vec{\psi}\in\vec{H}^1(\Omega)\cap\vec{L}^2_{\vec{0}}(\Omega)$ satisfying $\Curl\vec{\psi}=\vec{z}$ can always be chosen so that
\begin{equation} \label{hh.reg1}
  \|\vec{\psi}\|_1\leq C_{\Omega,1}\|\vec{z}\|_0.
\end{equation}

\subsubsection{Second Helmholtz--Hodge decomposition}

Let us now introduce the following space of harmonic vector fields:
\begin{equation} \label{eq:sharmonic}
  \sharmonic\defi\Hcurlz\cap\Hzdivz.
\end{equation}
By~\cite[Prop.~3.3.13]{ACJLa:18}, the harmonic space $\sharmonic$ has dimension $\beta_1$, and a basis for $\sharmonic$ is given by $\big(\check{\Grad\pi_i}\big)_{i\in\{1,\ldots,\beta_1\}}$, where, for $i\in\{1,\ldots,\beta_1\}$, $\pi_i \in H^1(\hat{\Omega})\cap L^2_0(\hat{\Omega})$ is the unique solution (recall that $\hat{\Omega}$ is assumed to be connected) to
\begin{subequations}
\label{eq:harmonic2}
  \begin{alignat}{2}
    -\Div (\eta\Grad \pi_i) &= 0 &\qquad&\text{in $\hat{\Omega}$},
    \\
    (\eta\Grad \pi_i) {\cdot} \normal &= 0 &\qquad&\text{on $\Gamma$},
	\\
    \llbracket(\eta\Grad \pi_i) {\cdot} \normal_{\Sigma_{i'}}\rrbracket_{\Sigma_{i'}}  &=  0 &\qquad&\text{for all $i'\in\{1,\ldots,\beta_1\}$},
    \\
    \llbracket\pi_i\rrbracket_{\Sigma_{i'}} &= \delta_{ii'}&\qquad&\text{for all $i'\in\{1,\ldots,\beta_1\}$}.
  \end{alignat}
\end{subequations}
We remind the reader that $\check{\Grad\pi_i}$ is the continuation to $\bLL$ of $\Grad\pi_i\in\bLL[\hat{\Omega}]$, and that the jump operator $\llbracket{\cdot}\rrbracket_{\Sigma_{i'}}$ is defined in~\eqref{eq:jump}. Notice that the $\pi_i$'s are real-valued (since $\overline{\pi_i}$ also solves the uniquely solvable Problem~\eqref{eq:harmonic2}) and, following the convention~\eqref{eq:jump},
$$\mathfrak{L}_{i'i}\defi\big(\eta\,\check{\Grad\pi_i},\check{\Grad\pi_{i'}}\big)_{\Omega}=\big(\eta\Grad\pi_i,\Grad\pi_{i'}\big)_{\hat{\Omega}}=\big\langle(\eta\Grad\pi_i)_{\mid\Sigma_{i'}}{\cdot}\normal_{\Sigma_{i'}},1\big\rangle_{\Sigma_{i'}},$$
which implies that $\check{\Grad\pi_i}$ and $\check{\Grad\pi_{i'}}$ are not $\vec{L}^2_\eta(\Omega)$-orthogonal in general.
The real-valued matrix $\mathfrak{L}\in\Real^{\beta_1\times\beta_1}$ is called {\em inductance matrix}, and is symmetric positive-definite (see~\cite[Cor.~3.3.14]{ACJLa:18}). For $\vec{w}\in\sharmonic$ writing $\vec{w}=\check{\Grad\pi}$, with $\pi\in H^1(\hat{\Omega})\cap L^2_0(\hat{\Omega})$ such that $\pi\defi\sum_{i=1}^{\beta_1}\alpha_{i}\pi_{i}$, letting this time $\vec{\alpha}_{\vec{w}}\defi(\alpha_{i}\in\Comp)_{i\in\{1,\ldots,\beta_1\}}$ and $\vec{\beta}_{\vec{w}}\defi(\langle(\eta\vec{w})_{\mid\Sigma_{i'}}{\cdot}\normal_{\Sigma_{i'}},1\rangle_{\Sigma_{i'}}\in\Comp)_{i'\in\{1,\ldots,\beta_1\}}$, there holds
\begin{equation} \label{eq:induct}
  \vec{\alpha}_{\vec{w}}=\mathfrak{L}^{-1}\vec{\beta}_{\vec{w}}.
\end{equation}
As a consequence, $\vec{w}\equiv\vec{0}$ if and only if $\langle(\eta\vec{w})_{\mid\Sigma_{i'}}{\cdot}\normal_{\Sigma_{i'}},1\rangle_{\Sigma_{i'}}=0$ for all $i'\in\{1,\ldots,\beta_1\}$.

Building on~\cite[Prop.~3.7.3]{ACJLa:18}, and using~\cite[Thm.~3.5.1]{ACJLa:18}, we infer the following second $\vec{L}^2_\eta(\Omega)$-orthogonal Helmholtz--Hodge decomposition:
\begin{equation} \label{eq:helm2}
  \bLL=\Grad\big(H^1(\Omega)\cap L^2_0(\Omega)\big)\overset{\perp_\eta}{\oplus}\frac{\eta_\sharp}{\eta}\Curl\big(\vec{H}^1(\Omega)\cap\Hzcurl\big)\overset{\perp_\eta}{\oplus}\sharmonic.
\end{equation}
Furthermore, combining the results of~\cite[Thm.~3.5.1]{ACJLa:18} and~\cite[Thm.~3.6.7]{ACJLa:18}, we know that there is $C_{\Omega,2}>0$ such that $\vec{\psi}\in\vec{H}^1(\Omega)\cap\Hzcurl$ satisfying $\Curl\vec{\psi}=\vec{z}$ can always be chosen so that
\begin{equation} \label{hh.reg2}
  \|\vec{\psi}\|_1\leq C_{\Omega,2}\|\vec{z}\|_0.
\end{equation}

\subsection{Weber inequalities and Maxwell compactness}

Recall the definition $\cweber\defi\Hcurl\cap\Hdiv$. Endowed with the (weighted) norm
$$\|\vec{v}\|^2_{\vec{X}}\defi\|\vec{v}\|_{\Div}^2+\|\vec{v}\|_{\Curl}^2,$$
$\cweber$ is a Hilbert space.
The first and second Weber inequalities, as well as the related Maxwell compactness properties, are direct consequences of the Helmholtz--Hodge decompositions~\eqref{eq:helm1} and~\eqref{eq:helm2}.

\subsubsection{First Weber inequality}

Let
\begin{equation} \label{eq:cfweb}
  \cfweber\defi\Hzcurl\cap\Hdiv.
\end{equation}
As a closed subspace of $\cweber$ (recall that the rotated tangential trace map is continuous), $\cfweber$, endowed with the $\|{\cdot}\|_{\vec{X}}$-norm, is a Hilbert space. We state and prove the first Weber inequality.
\begin{proposition}[First Weber inequality]
  There is $C_{W,1}>0$ only depending on $\Omega$ such that, for all $\vec{v}\in\cfweber$,
  \begin{equation} \label{eq:web1}
    \|\eta^{\frac12}\vec{v}\|_0\leq C_{W,1}\Bigg(\eta_\flat^{-\frac12}\|\Div(\eta\vec{v})\|_0+\eta_\sharp^{\frac12}\|\Curl\vec{v}\|_0+\eta_\flat^{-\frac12}\kappa_\eta^{\frac12}\left(\sum_{j=1}^{\beta_2}|\langle(\eta\vec{v})_{\mid\Gamma_j}{\cdot}\normal,1\rangle_{\Gamma_j}|^2\right)^{\nicefrac12}\Bigg).
  \end{equation}
\end{proposition}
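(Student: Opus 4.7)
The plan is to apply the first Helmholtz--Hodge decomposition~\eqref{eq:helm1} to split any $\vec{v}\in\cfweber$ as the $\vec{L}^2_\eta$-orthogonal sum $\vec{v}=\Grad\varphi+\vec{z}+\vec{w}$, with $\varphi\in H^1_0(\Omega)$, $\vec{z}\defi\tfrac{\eta_\sharp}{\eta}\Curl\vec{\psi}$ for some $\vec{\psi}\in\vec{H}^1(\Omega)\cap\vec{L}^2_{\vec{0}}(\Omega)$, and $\vec{w}\in\fharmonic$. Pythagoras in $\vec{L}^2_\eta(\Omega)$ then reduces the task to separately bounding $\|\eta^{\frac12}\Grad\varphi\|_0$, $\|\eta^{\frac12}\vec{z}\|_0$ and $\|\eta^{\frac12}\vec{w}\|_0$ by (a piece of) the right-hand side of~\eqref{eq:web1}, with hidden constants depending only on $\Omega$.

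The gradient and curl components are classical and proceed by comparing two expressions for the pairings $(\eta\vec{v},\Grad\varphi)_\Omega$ and $(\eta\vec{v},\vec{z})_\Omega$: the $\vec{L}^2_\eta$-orthogonality of the decomposition identifies them respectively with $\|\eta^{\frac12}\Grad\varphi\|_0^2$ and $\|\eta^{\frac12}\vec{z}\|_0^2$, while integration by parts—using $\varphi\in H^1_0(\Omega)$ for the gradient and the vanishing rotated tangential trace of $\vec{v}$ for the curl—turns them into $-(\Div(\eta\vec{v}),\varphi)_\Omega$ and $\eta_\sharp(\Curl\vec{v},\vec{\psi})_\Omega$. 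Combining Cauchy--Schwarz with the Poincar\'e inequality on $H^1_0(\Omega)$ and with $\|\Grad\varphi\|_0\leq\eta_\flat^{-\frac12}\|\eta^{\frac12}\Grad\varphi\|_0$ produces the $\eta_\flat^{-\frac12}\|\Div(\eta\vec{v})\|_0$ contribution. For the curl term, the key observation is that the weight $\tfrac{\eta_\sharp}{\eta}$ in~\eqref{eq:helm1} has been precisely tuned so that the regularity estimate~\eqref{hh.reg1}, applied to $\Curl\vec{\psi}=\tfrac{\eta}{\eta_\sharp}\vec{z}$, yields $\|\vec{\psi}\|_0\leq\|\vec{\psi}\|_1\lesssim\eta_\sharp^{-\frac12}\|\eta^{\frac12}\vec{z}\|_0$ with no heterogeneity contrast, leading cleanly to $\|\eta^{\frac12}\vec{z}\|_0\lesssim\eta_\sharp^{\frac12}\|\Curl\vec{v}\|_0$.

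The main obstacle is the harmonic component $\vec{w}\in\fharmonic$, where both the topology (through $\beta_2$) and the heterogeneity contrast (through $\kappa_\eta^{\frac12}$) enter. Writing $\vec{w}=\Grad\omega$ with $\omega=\sum_{j=1}^{\beta_2}\alpha_j\omega_j$ in the capacity-potential basis of~\eqref{eq:harmonic1}, the identity~\eqref{eq:capac} reads $\vec{\alpha}_{\vec{w}}=\mathfrak{C}^{-1}\vec{\beta}_{\vec{w}}$, with $\vec{\beta}_{\vec{w}}=(\langle(\eta\vec{w})_{\mid\Gamma_j}{\cdot}\normal,1\rangle_{\Gamma_j})_j$. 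I would first express $\vec{\beta}_{\vec{w}}$ in terms of the data fluxes $\vec{\beta}_{\vec{v}}\defi(\langle(\eta\vec{v})_{\mid\Gamma_j}{\cdot}\normal,1\rangle_{\Gamma_j})_j$ and $\Div(\eta\vec{v})$: Stokes's theorem on the closed surface $\Gamma_j$ kills the flux of the divergence-free $\eta\vec{z}=\eta_\sharp\Curl\vec{\psi}$, while the $\eta$-harmonicity of $\omega_j$ (whose boundary trace coincides with the indicator of $\Gamma_j$ on $\Gamma$), combined with $\varphi\in H^1_0(\Omega)$, gives via Green's identity the conversion $\langle(\eta\Grad\varphi)_{\mid\Gamma_j}{\cdot}\normal,1\rangle_{\Gamma_j}=(\Div(\eta\vec{v}),\omega_j)_\Omega$ (using that $\Div(\eta\vec{v})=\Div(\eta\Grad\varphi)$, since the two other summands are $\eta$-divergence-free). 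Together with the uniform maximum-principle bound $\|\omega_j\|_\infty\leq 1$, this yields $|\vec{\beta}_{\vec{w}}|\lesssim|\vec{\beta}_{\vec{v}}|+\|\Div(\eta\vec{v})\|_0$ with an $\Omega$-only constant. Finally, the identity $\|\eta^{\frac12}\vec{w}\|_0^2=\overline{\vec{\beta}_{\vec{w}}}^T\mathfrak{C}^{-1}\vec{\beta}_{\vec{w}}$, combined with the two-sided spectral comparisons $\eta_\flat\tilde{\mathfrak{C}}\preceq\mathfrak{C}\preceq\eta_\sharp\tilde{\mathfrak{C}}$ against the $\eta\equiv 1$ reference capacitance $\tilde{\mathfrak{C}}$ (whose spectrum depends only on $\Omega$), translates into the explicit $\eta$-dependence displayed in~\eqref{eq:web1}, including the $\kappa_\eta^{\frac12}$ prefactor on the flux term. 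Summation of the three bounds (with Young's inequality on the cross-terms if needed) yields~\eqref{eq:web1}.
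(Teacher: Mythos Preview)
Your proof is correct and follows the paper's overall architecture: apply the first Helmholtz--Hodge decomposition~\eqref{eq:helm1}, then estimate the three $\vec{L}^2_\eta$-orthogonal pieces separately. For the gradient and curl components your argument is identical to the paper's (the paper writes $\|\eta^{\frac12}\vec{v}\|_0^2=(\eta\vec{v},\Grad\varphi)_\Omega+\eta_\sharp(\vec{v},\Curl\vec{\psi})_\Omega+(\eta\vec{v},\Grad\omega)_\Omega$ and integrates each term by parts, which is exactly your pairing argument).

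The one genuine difference is the harmonic piece. The paper handles $(\eta\vec{v},\Grad\omega)_\Omega$ directly: integrating by parts against the full $\vec{v}$ and using that $\omega$ is piecewise constant on $\Gamma$ yields $-(\Div(\eta\vec{v}),\omega)_\Omega+\sum_j\langle(\eta\vec{v})_{\mid\Gamma_j}{\cdot}\normal,1\rangle_{\Gamma_j}\overline{\alpha_j}$; it then bounds $\|\omega\|_0$ by Poincar\'e (since $\omega_{\mid\Gamma_0}=0$) and $|\vec{\alpha}_{\vec{w}}|$ via~\eqref{eq:alpha1}. You take a more indirect route: decompose the fluxes $\langle(\eta\vec{v})_{\mid\Gamma_j}{\cdot}\normal,1\rangle_{\Gamma_j}$ along the three Hodge components to express $\vec{\beta}_{\vec{w}}$ in terms of $\vec{\beta}_{\vec{v}}$ and $\Div(\eta\vec{v})$, then use the quadratic form identity $\|\eta^{\frac12}\vec{w}\|_0^2=\vec{\beta}_{\vec{w}}^{\,*}\mathfrak{C}^{-1}\vec{\beta}_{\vec{w}}$ together with a Dirichlet-principle comparison of $\mathfrak{C}$ with the $\eta\equiv 1$ capacitance. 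This is a valid alternative; it requires two extra (standard) ingredients the paper does not use---the maximum principle for $\omega_j$ and the variational characterization of $\mathfrak{C}$---but in return it makes the $\eta$-dependence of the harmonic constant completely transparent and in fact gives a slightly sharper prefactor $\eta_\flat^{-\frac12}$ on the flux term (which is of course dominated by the stated $\eta_\flat^{-\frac12}\kappa_\eta^{\frac12}$). The paper's route is shorter and avoids these auxiliary facts at the cost of a less explicit tracking of constants.
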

\begin{proof}
  The proof of~\eqref{eq:web1} builds on the first Helmholtz--Hodge decomposition~\eqref{eq:helm1}. Since $\vec{v}\in\cfweber\subset\bLL$, there exist $\varphi\in H^1_0(\Omega)$, $\vec{\psi}\in\vec{H}^1(\Omega)\cap\vec{L}^2_{\vec{0}}(\Omega)$, and $\vec{w}\in\fharmonic$ (writing $\vec{w}=\Grad\omega$, with $\omega\in H^1(\Omega)$ such that $\omega\defi\sum_{j=1}^{\beta_2}\alpha_j\omega_j$), such that
  \begin{equation} \label{eq:decomp1}
    \vec{v}=\Grad\varphi+\frac{\eta_\sharp}{\eta}\Curl\vec{\psi}+\vec{w}.
  \end{equation}
  From the previous decomposition, there holds
  $$\|\eta^{\frac12}\vec{v}\|_0^2=(\eta\vec{v},\Grad\varphi)_{\Omega}+\eta_\sharp(\vec{v},\Curl\vec{\psi})_{\Omega}+(\eta\vec{v},\Grad\omega)_{\Omega}.$$
  By integration by parts, since $\varphi\in H^1_0(\Omega)$, $\vec{v}\in\Hzcurl$, and each $\omega_j$ satisfies~\eqref{eq:harmonic1}, we infer that
  $$\|\eta^{\frac12}\vec{v}\|_0^2=-(\Div(\eta\vec{v}),\varphi)_{\Omega}+\eta_\sharp(\Curl\vec{v},\vec{\psi})_{\Omega}-(\Div(\eta\vec{v}),\omega)_{\Omega}+\sum_{j=1}^{\beta_2}\langle(\eta\vec{v})_{\mid\Gamma_j}{\cdot}\normal,1\rangle_{\Gamma_j}\overline{\alpha_j}.$$
  The triangle and Cauchy--Schwarz inequalities, followed by the Poincar\'e inequality applied to both $\varphi\in H^1_0(\Omega)$ and $\omega\in H^1(\Omega)$ (remark that $\omega_{\mid\Gamma_0}=0$), and the estimate~\eqref{hh.reg1}, then yield
  \begin{multline*}
    \|\eta^{\frac12}\vec{v}\|_0^2\leq C\Bigg(\|\Div(\eta\vec{v})\|_0\|\Grad\varphi\|_0+\eta_\sharp^{\frac12}\|\Curl\vec{v}\|_0\,\eta_\sharp^{\frac12}\|\Curl\vec{\psi}\|_0\\+\|\Div(\eta\vec{v})\|_0\|\Grad\omega\|_0+\left(\sum_{j=1}^{\beta_2}|\langle(\eta\vec{v})_{\mid\Gamma_j}{\cdot}\normal,1\rangle_{\Gamma_j}|^2\right)^{\nicefrac12}\!\!|\vec{\alpha}_{\vec{w}}|\Bigg),
  \end{multline*}
  where $|\vec{\alpha}_{\vec{w}}|$ denotes the Euclidean norm of $\vec{\alpha}_{\vec{w}}\defi(\alpha_j\in\Comp)_{j\in\{1,\dots,\beta_2\}}$. Starting from the expression~\eqref{eq:capac} of $\vec{\alpha}_{\vec{w}}$, and since $\langle(\eta\vec{w})_{\mid\Gamma_{j'}}{\cdot}\normal,1\rangle_{\Gamma_{j'}}=(\eta\Grad\omega,\Grad\omega_{j'})_{\Omega}$, we infer that
  \begin{equation} \label{eq:alpha1}
    |\vec{\alpha}_{\vec{w}}|=|\mathfrak{C}^{-1}\vec{\beta}_{\vec{w}}|\leq\rho(\mathfrak{C}^{-1})|\vec{\beta}_{\vec{w}}|\leq\rho(\mathfrak{C}^{-1})\left(\sum_{j'=1}^{\beta_2}\|\eta^{\frac12}\Grad\omega_{j'}\|_0^2\right)^{\nicefrac12}\|\eta^{\frac12}\Grad\omega\|_0,
  \end{equation}
  where $\rho(\mathfrak{C}^{-1})>0$ denotes the spectral radius of the symmetric positive-definite (real-valued) matrix $\mathfrak{C}^{-1}$, which is proportional to $\eta_\flat^{-1}$.
  The conclusion then follows from the $\vec{L}^2_\eta(\Omega)$-orthogonality of the decomposition~\eqref{eq:decomp1}, so that $\|\eta^{\frac12}\Grad\varphi\|_0\leq\|\eta^{\frac12}\vec{v}\|_0$, $\eta_\sharp\|\eta^{-\frac12}\Curl\vec{\psi}\|_0\leq\|\eta^{\frac12}\vec{v}\|_0$, and $\|\eta^{\frac12}\Grad\omega\|_0\leq\|\eta^{\frac12}\vec{v}\|_0$, and from the fact that $\|{\cdot}\|_0\leq\eta_\flat^{-\frac12}\|\eta^{\frac12}{\cdot}\|_0$ and $\eta_\sharp^{\frac12}\|{\cdot}\|_0\leq\eta_\sharp\|\eta^{-\frac12}{\cdot}\|_0$.
\end{proof}
\noindent
The above proof of~\eqref{eq:web1} is constructive.
An alternative proof (in the case $\eta\equiv 1$), based on a contradiction argument, can be found in~\cite[Thm.~3.4.3]{ACJLa:18}.
Another proof (in the case of a symmetric tensor field $\eta$), leveraging the inverse mapping theorem, is available in~\cite[Thm.~6.1.6 ($s=0$)]{ACJLa:18}.
\begin{remark}[Useful variant] \label{rem:var1}
  Assume that $\vec{v}\in\Hzcurl$ satisfies
  $$(\eta\vec{v},\vec{z})_{\Omega}=0\qquad\forall\vec{z}\in\Grad\big(H^1_0(\Omega)\big)\overset{\perp_\eta}{\oplus}\fharmonic.$$
  Then, following the above proof, one can show that
  $$\|\eta^{\frac12}\vec{v}\|_0\leq C_{\Omega,1}\eta_\sharp^{\frac12}\|\Curl\vec{v}\|_0,$$
  where $C_{\Omega,1}>0$ is the multiplicative constant from~\eqref{hh.reg1}.
\end{remark}

The first Weber inequality enables us to define the following norm on $\cfweber$:
$$\|\vec{v}\|^2_{\vec{X}_{\vec{\tau}}}\defi\eta_\flat^{-1}\|\Div(\eta\vec{v})\|_0^2+\eta_\sharp\|\Curl\vec{v}\|_0^2+\eta_\flat^{-1}\sum_{j=1}^{\beta_2}|\langle(\eta\vec{v})_{\mid\Gamma_j}{\cdot}\normal,1\rangle_{\Gamma_j}|^2,$$
which is equivalent to the $\|{\cdot}\|_{\vec{X}}$-norm: there is $C_{\vec{\tau}}>0$, only depending on $\Omega$, such that
$$(C_{\vec{\tau}}\kappa_\eta)^{-1}\|\vec{v}\|_{\vec{X}}^2\leq\|\vec{v}\|_{\vec{X}_{\vec{\tau}}}^2\leq C_{\vec{\tau}}\kappa_\eta\|\vec{v}\|_{\vec{X}}^2\qquad\forall\vec{v}\in\cfweber.$$
Endowed with the $\|{\cdot}\|_{\vec{X}_{\vec{\tau}}}$-norm (more precisely, with the corresponding Hermitian inner product), $\cfweber$ is a Hilbert space. We are now in position to state and prove Maxwell compactness in $\cfweber$. Our proof partly takes inspiration from the one of~\cite[Thm.~3.4.4]{ACJLa:18} (see also~\cite[Thm.~7.5.1 ($s=0$)]{ACJLa:18} in the case of a symmetric tensor field $\eta$).

\begin{proposition}[Maxwell compactness in $\cfweber$] \label{pr:max1}
  Let $(\vec{v}_m)_{m\in\Natu}$ be a sequence of elements of $\cfweber$ for which there exists a real number $C_M>0$ such that $\eta_\sharp^{-\frac12}\|\vec{v}_m\|_{\vec{X}_{\vec{\tau}}}\leq C_M$ for all $m\in\Natu$. Then, there exists an element $\vec{v}\in\cfweber$ such that, along a subsequence as $m\to\infty$, $\vec{v}_m\to\vec{v}$ strongly in $\bLL$.
\end{proposition}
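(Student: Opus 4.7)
My plan is to mimic the proof of the first Weber inequality by applying the first Helmholtz--Hodge decomposition~\eqref{eq:helm1} to each $\vec{v}_m$ and treating the three resulting pieces separately. Concretely, write
$$\vec{v}_m = \Grad\varphi_m + \frac{\eta_\sharp}{\eta}\Curl\vec{\psi}_m + \vec{w}_m,$$
with $\varphi_m \in H^1_0(\Omega)$, $\vec{\psi}_m \in \vec{H}^1(\Omega)\cap\vec{L}^2_{\vec{0}}(\Omega)$ satisfying~\eqref{hh.reg1}, and $\vec{w}_m \in \fharmonic$. The $\vec{L}^2_\eta(\Omega)$-orthogonality of the decomposition, combined with the boundedness of $\|\vec{v}_m\|_{\vec{X}_{\vec{\tau}}}$ and the Poincaré inequality, ensures that $(\varphi_m)$ is bounded in $H^1_0(\Omega)$ and that $(\vec{\psi}_m)$ can be chosen bounded in $\vec{H}^1(\Omega)$ (using $\|\eta^{-\frac12}\Curl \vec{\psi}_m\|_0 \le \eta_\sharp^{-1}\|\eta^{\frac12}\vec{v}_m\|_0$ via orthogonality, and~\eqref{hh.reg1}). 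The finite-dimensional piece $\vec{w}_m$ is bounded via~\eqref{eq:capac} since the fluxes $\langle(\eta\vec{v}_m)_{\mid\Gamma_j}{\cdot}\normal,1\rangle_{\Gamma_j}$ are bounded by hypothesis.

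Next, I would extract (a single) subsequence along which simultaneously: $\varphi_m \to \varphi$ strongly in $\LL$ and $\vec{\psi}_m \to \vec{\psi}$ strongly in $\bLL$ (by the Rellich--Kondrachov theorem), $\vec{w}_m \to \vec{w}$ strongly in $\bLL$ (by finite-dimensionality of $\fharmonic$), and $\Div(\eta\vec{v}_m)\rightharpoonup\xi$ weakly in $\LL$ together with $\Curl\vec{v}_m \rightharpoonup \vec{\zeta}$ weakly in $\bLL$. The core of the argument is to show that $\Grad\varphi_m$ and $\frac{\eta_\sharp}{\eta}\Curl\vec{\psi}_m$ are Cauchy in $\bLL$. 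For the gradient component, noting that $\Div(\eta\Grad\varphi_m) = \Div(\eta\vec{v}_m)$ (since the other two components are $\eta$-divergence-free), an integration by parts yields
$$\|\eta^{\frac12}\Grad(\varphi_m - \varphi_n)\|_0^2 = -\big(\Div(\eta(\vec{v}_m - \vec{v}_n)),\,\varphi_m - \varphi_n\big)_\Omega,$$
whose right-hand side tends to zero as $m,n \to \infty$ by weak-strong pairing. For the curl component, using $\vec{v}_m - \vec{v}_n \in \Hzcurl$ and integration by parts,
$$\eta_\sharp^2 \|\eta^{-\frac12}\Curl(\vec{\psi}_m - \vec{\psi}_n)\|_0^2 = \eta_\sharp\big(\Curl(\vec{v}_m - \vec{v}_n),\,\vec{\psi}_m - \vec{\psi}_n\big)_\Omega,$$
again tends to zero by weak-strong pairing. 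This shows $\vec{v}_m$ converges strongly in $\bLL$ to some $\vec{v}$.

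Finally, I would check that the limit $\vec{v}$ lies in $\cfweber$: from the weak limits, $\Div(\eta\vec{v}) = \xi \in \LL$ and $\Curl\vec{v} = \vec{\zeta} \in \bLL$, and the zero rotated tangential trace condition is preserved because $\Hzcurl$ is a closed subspace of $\Hcurl$ (and the convergence $\vec{v}_m \to \vec{v}$ is at least weak in $\Hcurl$).

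I expect the main obstacle to be the careful book-keeping needed to make the three components simultaneously converge along a common subsequence while ensuring their defining regularity estimates (especially~\eqref{hh.reg1} for $\vec{\psi}_m$) remain valid under the choices made; everything else reduces to standard Rellich compactness and the orthogonality of the Helmholtz--Hodge decomposition. A minor subtlety is that the integration by parts used for the curl component requires $\vec{v}_m - \vec{v}_n$ to have zero tangential trace, which is precisely why the argument works for $\cfweber$ but would need the symmetric adaptation for $\csweber$.
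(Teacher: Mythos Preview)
Your approach is essentially the same as the paper's: both apply the first Helmholtz--Hodge decomposition~\eqref{eq:helm1} to each $\vec{v}_m$, use orthogonality and~\eqref{hh.reg1} to bound the three pieces, invoke Rellich compactness for $\varphi_m$ and $\vec{\psi}_m$ together with finite-dimensionality of $\fharmonic$ for $\vec{w}_m$, and then upgrade to strong convergence of $\Grad\varphi_m$ and $\Curl\vec{\psi}_m$ via the same integration-by-parts identities (exploiting $\varphi_m\in H^1_0(\Omega)$ for the divergence part and $\vec{v}_m\in\Hzcurl$ for the curl part). The only cosmetic difference is that the paper first identifies the weak limit $\vec{v}\in\cfweber$ and then compares $\vec{v}_m$ to $\vec{v}$, whereas you run a Cauchy argument in $m,n$; both routes lead to the same bound (e.g.~$\|\eta^{\frac12}\Grad(\varphi_m-\varphi_n)\|_0^2\le 2C_M\eta_\sharp\|\varphi_m-\varphi_n\|_0$) and are equally valid. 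One small simplification: rather than bounding $\vec{w}_m$ via~\eqref{eq:capac} and the fluxes of $\eta\vec{v}_m$ (which requires checking that the fluxes of the other two pieces through each $\Gamma_j$ vanish), it is quicker to use $\vec{L}^2_\eta(\Omega)$-orthogonality directly to get $\|\eta^{\frac12}\vec{w}_m\|_0\le\|\eta^{\frac12}\vec{v}_m\|_0$, the latter being bounded by the first Weber inequality~\eqref{eq:web1}.
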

\begin{proof}
  The proof proceeds in three steps.

  \smallskip
  {\em Step 1 (Weak convergence):} By assumption, $\|\vec{v}_m\|_{\vec{X}_{\vec{\tau}}}\leq C_M\eta_\sharp^{\frac12}$ for all $m\in\Natu$. By definition of the $\|{\cdot}\|_{\vec{X}_{\vec{\tau}}}$-norm, and by the first Weber inequality~\eqref{eq:web1}, this implies that (i) $(\vec{v}_m)_{m\in\Natu}$ and $(\Curl\vec{v}_m)_{m\in\Natu}$ are uniformly (in $m$) bounded in $\bLL$, and (ii) $(\Div(\eta\vec{v}_m))_{m\in\Natu}$ is uniformly (in $m$) bounded in $\LL$. Using standard (weak compactness and limit regularity) arguments, we then infer the existence of $\vec{v}\in\cfweber$ such that, along a subsequence (not relabelled), (i) $\vec{v}_m\rightharpoonup\vec{v}$ and $\Curl\vec{v}_m\rightharpoonup\Curl\vec{v}$ weakly in $\bLL$, and (ii) $\Div(\eta\vec{v}_m)\rightharpoonup\Div(\eta\vec{v})$ weakly in $\LL$.

  \smallskip
  {\em Step 2 (Characterization of the limit):} For any $m\in\Natu$, since $\vec{v}_m\in\cfweber\subset\bLL$, invoking the first Helmholtz--Hodge decomposition~\eqref{eq:helm1}, there exist $\varphi_m\in H^1_0(\Omega)$, $\vec{\psi}_m\in\vec{H}^1(\Omega)\cap\vec{L}^2_{\vec{0}}(\Omega)$, and $\vec{w}_m\in\fharmonic$, such that
  \begin{equation} \label{eq:decomp1m}
    \vec{v}_m=\Grad\varphi_m+\frac{\eta_\sharp}{\eta}\Curl\vec{\psi}_m+\vec{w}_m,
  \end{equation}
  with $\|\vec{\psi}_m\|_1\leq C_{\Omega,1}\|\Curl\vec{\psi}_m\|_0$ by~\eqref{hh.reg1}.
  Since the sequence $(\vec{v}_m)_{m\in\Natu}$ is uniformly (in $m$) bounded in $\bLL$, and the decomposition~\eqref{eq:decomp1m} is $\vec{L}^2_\eta(\Omega)$-orthogonal with $\eta$ essentially bounded by above and by below away from zero in $\Omega$, the three sequences $(\Grad\varphi_m)_{m\in\Natu}$, $(\Curl\vec{\psi}_m)_{m\in\Natu}$, and $(\vec{w}_m)_{m\in\Natu}$ are also uniformly (in $m$) bounded in $\bLL$.
  Given that $(\vec{w}_m)_{m\in\Natu}\subset\fharmonic$, and that the harmonic space $\fharmonic$ has finite dimension, we directly infer from the Bolzano--Weierstra\ss~theorem the existence of $\vec{w}\in\fharmonic$ such that, up to extraction (not relabelled), $\vec{w}_m\to\vec{w}$ strongly in $\bLL$.
  Let us now deal with the two remaining terms of the decomposition.
  By the Poincar\'e inequality applied to $\varphi_m\in H^1_0(\Omega)$, and the estimate $\|\vec{\psi}_m\|_1\leq C_{\Omega,1}\|\Curl\vec{\psi}_m\|_0$, there holds that (i) $(\varphi_m)_{m\in\Natu}$ is uniformly bounded in $H^1(\Omega)$, and (ii) $(\vec{\psi}_m)_{m\in\Natu}$ is uniformly bounded in $\vec{H}^1(\Omega)$.
  Invoking standard (weak compactness and limit regularity) arguments, one can infer the existence of $\varphi\in H^1_0(\Omega)$ and $\vec{\psi}\in\vec{H}^1(\Omega)\cap\vec{L}^2_{\vec{0}}(\Omega)$ such that, up to extractions (not relabelled), $\Grad\varphi_m\rightharpoonup\Grad\varphi$ and $\Curl\vec{\psi}_m\rightharpoonup\Curl\vec{\psi}$ weakly in $\bLL$. By linearity, we have thus proven that, along a subsequence (not relabelled), $\vec{v}_m\rightharpoonup\Grad\varphi+\frac{\eta_\sharp}{\eta}\Curl\vec{\psi}+\vec{w}$ weakly in $\bLL$. The uniqueness of the weak limit then yields that $\vec{v}=\Grad\varphi+\frac{\eta_\sharp}{\eta}\Curl\vec{\psi}+\vec{w}$.

  \smallskip
  {\em Step 3 (Strong convergence):} By Rellich's compactness theorem, it actually holds that, along the same subsequence (not relabelled) as in Step 2, $\varphi_m\to\varphi$ strongly in $\LL$ and $\vec{\psi}_m\to\vec{\psi}$ strongly in $\bLL$.
   We now want to prove the strong convergences of $(\Grad\varphi_m)_{m\in\Natu}$ and $(\Curl\vec{\psi}_m)_{m\in\Natu}$ in $\bLL$ (we remind that we already proved strong convergence for $(\vec{w}_m)_{m\in\Natu}$). Recalling the expression of $\vec{v}\in\cfweber$ we derived in Step 2, since the decomposition~\eqref{eq:decomp1m} is $\vec{L}^2_\eta(\Omega)$-orthogonal, we have
  $$\|\eta^{\frac12}(\Grad\varphi_m-\Grad\varphi)\|_0^2=\big(\eta(\vec{v}_m-\vec{v}),\Grad(\varphi_m-\varphi)\big)_{\Omega}=-\big(\Div(\eta(\vec{v}_m-\vec{v})),\varphi_m-\varphi\big)_{\Omega},$$
  where we have used that $\varphi_m,\varphi\in H^1_0(\Omega)$. By Cauchy--Schwarz inequality, combined with the fact that $\|\Div(\eta\vec{v}_m)\|_0\leq C_M\eta_\sharp$ for all $m\in\Natu$ and $\Div(\eta\vec{v}_m)\rightharpoonup\Div(\eta\vec{v})$ weakly in $\LL$ (so that $\|\Div(\eta\vec{v})\|_0\leq C_M\eta_\sharp$), we infer that
  $$\|\eta^{\frac12}(\Grad\varphi_m-\Grad\varphi)\|_0^2\leq 2C_M\eta_\sharp\|\varphi_m-\varphi\|_0.$$
  Since $\varphi_m\to\varphi$ strongly in $\LL$, passing to the limit $m\to\infty$ we conclude that $\Grad\varphi_m\to\Grad\varphi$ strongly in $\bLL$. By the very same arguments, there holds
  $$\|\eta^{-\frac12}(\Curl\vec{\psi}_m-\Curl\vec{\psi})\|_0^2=\eta_\sharp^{-1}\big(\vec{v}_m-\vec{v},\Curl(\vec{\psi}_m-\vec{\psi})\big)_{\Omega}=\eta_\sharp^{-1}\big(\Curl(\vec{v}_m-\vec{v}),\vec{\psi}_m-\vec{\psi}\big)_{\Omega},$$
  where we have used that $\vec{v}_m,\vec{v}\in\Hzcurl$. This implies that
  $$\|\eta^{-\frac12}(\Curl\vec{\psi}_m-\Curl\vec{\psi})\|_0^2\leq 2C_M\eta_\sharp^{-1}\|\vec{\psi}_m-\vec{\psi}\|_0,$$
  which eventually yields that $\Curl\vec{\psi}_m\to\Curl\vec{\psi}$ strongly in $\bLL$. Thus, along a subsequence (not relabelled), $\vec{v}_m\to\vec{v}$ strongly in $\bLL$, which concludes the proof.
\end{proof}

\subsubsection{Second Weber inequality}

Let
\begin{equation} \label{eq:csweb}
  \csweber\defi\Hcurl\cap\Hzdiv.
\end{equation}
As a closed subspace of $\cweber$ (recall that the $\eta$-weighted normal trace map is continuous), $\csweber$, endowed with the $\|{\cdot}\|_{\vec{X}}$-norm, is a Hilbert space.
We now state and prove the second Weber inequality.
\begin{proposition}[Second Weber inequality]
  There is $C_{W,2}>0$ only depending on $\Omega$ such that, for all $\vec{v}\in\csweber$,
  \begin{equation} \label{eq:web2}
    \|\eta^{\frac12}\vec{v}\|_0\leq C_{W,2}\Bigg(\eta_\flat^{-\frac12}\|\Div(\eta\vec{v})\|_0+\eta_\sharp^{\frac12}\|\Curl\vec{v}\|_0+\eta_\flat^{-\frac12}\kappa_\eta^{\frac12}\left(\sum_{i=1}^{\beta_1}|\langle(\eta\vec{v})_{\mid\Sigma_i}{\cdot}\normal_{\Sigma_i},1\rangle_{\Sigma_i}|^2\right)^{\nicefrac12}\Bigg).
  \end{equation}
\end{proposition}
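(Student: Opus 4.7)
The plan is to follow, line by line, the proof of the first Weber inequality~\eqref{eq:web1}, substituting the first Helmholtz--Hodge decomposition~\eqref{eq:helm1} with the second one~\eqref{eq:helm2} and the capacitance argument with its inductance counterpart. Concretely, since $\vec{v}\in\csweber\subset\bLL$, I would invoke~\eqref{eq:helm2} to write $\vec{v}=\Grad\varphi+\tfrac{\eta_\sharp}{\eta}\Curl\vec{\psi}+\vec{w}$ with $\varphi\in H^1(\Omega)\cap L^2_0(\Omega)$, $\vec{\psi}\in\vec{H}^1(\Omega)\cap\Hzcurl$, and $\vec{w}\in\sharmonic$; writing $\vec{w}=\check{\Grad\pi}$ with $\pi=\sum_{i=1}^{\beta_1}\alpha_i\pi_i$, I would then test against $\eta\vec{v}$ to obtain
\[
\|\eta^{\frac12}\vec{v}\|_0^2=(\eta\vec{v},\Grad\varphi)_\Omega+\eta_\sharp(\vec{v},\Curl\vec{\psi})_\Omega+(\eta\vec{v},\check{\Grad\pi})_\Omega.
\]

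The next step is to integrate by parts each term. For the first one, since $\vec{v}\in\Hzdiv$, the boundary contribution vanishes and we are left with $-(\Div(\eta\vec{v}),\varphi)_\Omega$. For the second one, since $\vec{\psi}\in\Hzcurl$, a standard integration by parts yields $(\Curl\vec{v},\vec{\psi})_\Omega$. The key (and most delicate) step is the third term: recognising that $(\eta\vec{v},\check{\Grad\pi})_\Omega=(\eta\vec{v},\Grad\pi)_{\hat\Omega}$ and integrating by parts on the pseudo-Lipschitz cut domain $\hat\Omega$, the boundary $\partial\hat\Omega=\Gamma\cup\bigcup_{i}\Sigma_i$ contributes only through the cutting surfaces (the $\Gamma$-contribution vanishes since $\vec{v}\in\Hzdiv$). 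Using that $(\eta\vec{v})_{\mid\partial\hat\Omega}{\cdot}\normal$ is single-valued across each $\Sigma_i$ (because $\vec{v}\in\Hdiv$) while $\pi$ jumps by $\llbracket\pi\rrbracket_{\Sigma_{i'}}=\alpha_{i'}$ by~\eqref{eq:harmonic2}, the surface contribution telescopes to $\sum_{i=1}^{\beta_1}\langle(\eta\vec{v})_{\mid\Sigma_i}{\cdot}\normal_{\Sigma_i},1\rangle_{\Sigma_i}\overline{\alpha_i}$, while the volumic contribution reduces to $-(\Div(\eta\vec{v}),\pi)_{\hat\Omega}$.

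From here, I would apply Cauchy--Schwarz, followed by the Poincar\'e--Wirtinger inequality on $\varphi\in H^1(\Omega)\cap L^2_0(\Omega)$ and on $\pi\in H^1(\hat\Omega)\cap L^2_0(\hat\Omega)$ (recall that $\hat\Omega$ is connected), and by the regularity estimate~\eqref{hh.reg2} to control $\|\vec{\psi}\|_1$ by $\|\Curl\vec{\psi}\|_0$. To bound $|\vec{\alpha}_{\vec{w}}|$, I would mimic~\eqref{eq:alpha1}: starting from~\eqref{eq:induct} and exploiting $\langle(\eta\vec{w})_{\mid\Sigma_{i'}}{\cdot}\normal_{\Sigma_{i'}},1\rangle_{\Sigma_{i'}}=(\eta\Grad\pi,\Grad\pi_{i'})_{\hat\Omega}$, I would estimate
\[
|\vec{\alpha}_{\vec{w}}|\leq\rho(\mathfrak{L}^{-1})\Bigg(\sum_{i'=1}^{\beta_1}\|\eta^{\frac12}\Grad\pi_{i'}\|_{0,\hat\Omega}^2\Bigg)^{\nicefrac12}\|\eta^{\frac12}\Grad\pi\|_{0,\hat\Omega},
\]
with $\rho(\mathfrak{L}^{-1})$ proportional to $\eta_\flat^{-1}$. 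The $\vec{L}^2_\eta(\Omega)$-orthogonality of the decomposition then absorbs $\|\eta^{\frac12}\Grad\varphi\|_0$, $\eta_\sharp\|\eta^{-\frac12}\Curl\vec{\psi}\|_0$, and $\|\eta^{\frac12}\Grad\pi\|_{0,\hat\Omega}=\|\eta^{\frac12}\vec{w}\|_0$ by $\|\eta^{\frac12}\vec{v}\|_0$, and dividing through produces~\eqref{eq:web2} after using $\|{\cdot}\|_0\leq\eta_\flat^{-\frac12}\|\eta^{\frac12}{\cdot}\|_0$ and $\eta_\sharp^{\frac12}\|{\cdot}\|_0\leq\eta_\sharp\|\eta^{-\frac12}{\cdot}\|_0$.

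The main obstacle, or rather the one point where real care is required, is the integration by parts against $\vec{w}$: one has to work on $\hat\Omega$ (not on $\Omega$), correctly orient the two sides of each cutting surface as prescribed by $\normal_{\Sigma_i}$, and use the definition~\eqref{eq:jump} of the jump operator together with the non-trivial jump condition in~\eqref{eq:harmonic2} to produce the harmonic fluxes appearing on the right-hand side of~\eqref{eq:web2}. Everything else is a faithful transcription of the proof of~\eqref{eq:web1}.
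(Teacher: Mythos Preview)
Your proposal is correct and follows essentially the same route as the paper's own proof: apply the second Helmholtz--Hodge decomposition~\eqref{eq:helm2}, integrate by parts each of the three resulting terms (using $\vec{v}\in\Hzdiv$, $\vec{\psi}\in\Hzcurl$, and the jump structure of $\pi$ across the cutting surfaces in $\hat\Omega$, respectively), control $|\vec{\alpha}_{\vec{w}}|$ via the inductance matrix estimate~\eqref{eq:induct}, and conclude by $\vec{L}^2_\eta(\Omega)$-orthogonality. The only terminological difference is that the paper calls the mean-zero Poincar\'e inequality the ``Poincar\'e--Steklov'' inequality rather than ``Poincar\'e--Wirtinger''.
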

\begin{proof}
  The proof of~\eqref{eq:web2} builds on the second Helmholtz--Hodge decomposition~\eqref{eq:helm2}. Since $\vec{v}\in\csweber\subset\bLL$, there exist $\varphi\in H^1(\Omega)\cap L^2_0(\Omega)$, $\vec{\psi}\in\vec{H}^1(\Omega)\cap\Hzcurl$, and $\vec{w}\in\sharmonic$ (writing $\vec{w}=\check{\Grad\pi}$, with $\pi\in H^1(\hat{\Omega})\cap L^2_0(\hat{\Omega})$ such that $\pi\defi\sum_{i=1}^{\beta_1}\alpha_i\pi_i$), such that
  \begin{equation} \label{eq:decomp2}
    \vec{v}=\Grad\varphi+\frac{\eta_\sharp}{\eta}\Curl\vec{\psi}+\vec{w}.
  \end{equation}
  From the previous decomposition, there holds
  $$\|\eta^{\frac12}\vec{v}\|_0^2=(\eta\vec{v},\Grad\varphi)_{\Omega}+\eta_\sharp(\vec{v},\Curl\vec{\psi})_{\Omega}+(\eta\vec{v},\Grad\pi)_{\hat{\Omega}}.$$
  By integration by parts, since $\vec{v}\in\Hzdiv$, $\vec{\psi}\in\Hzcurl$, and each $\pi_i$ satisfies~\eqref{eq:harmonic2}, we infer
  $$\|\eta^{\frac12}\vec{v}\|_0^2=-(\Div(\eta\vec{v}),\varphi)_{\Omega}+\eta_\sharp(\Curl\vec{v},\vec{\psi})_{\Omega}-(\Div(\eta\vec{v}),\pi)_{\hat{\Omega}}+\sum_{i=1}^{\beta_1}\langle(\eta\vec{v})_{\mid\Sigma_i}{\cdot}\normal_{\Sigma_i},1\rangle_{\Sigma_i}\overline{\alpha_i}.$$
  The triangle and Cauchy--Schwarz inequalities, followed by the Poincar\'e--Steklov inequality applied to both $\varphi\in H^1(\Omega)\cap L^2_0(\Omega)$ and $\pi\in H^1(\hat{\Omega})\cap L^2_0(\hat{\Omega})$ (recall that $\hat{\Omega}$ is assumed to be connected), and the estimate~\eqref{hh.reg2}, then yield
  \begin{multline*}
    \|\eta^{\frac12}\vec{v}\|_0^2\leq C\Bigg(\|\Div(\eta\vec{v})\|_0\|\Grad\varphi\|_0+\eta_\sharp^{\frac12}\|\Curl\vec{v}\|_0\,\eta_\sharp^{\frac12}\|\Curl\vec{\psi}\|_0\\+\|\Div(\eta\vec{v})\|_0\|\check{\Grad\pi}\|_0+\left(\sum_{i=1}^{\beta_1}|\langle(\eta\vec{v})_{\mid\Sigma_i}{\cdot}\normal_{\Sigma_i},1\rangle_{\Sigma_i}|^2\right)^{\nicefrac12}\!\!|\vec{\alpha}_{\vec{w}}|\Bigg),
  \end{multline*}
  where $|\vec{\alpha}_{\vec{w}}|$ denotes the Euclidean norm of $\vec{\alpha}_{\vec{w}}\defi(\alpha_i\in\Comp)_{i\in\{1,\dots,\beta_1\}}$, and where we also used that $\|\Div(\eta\vec{v})\|_{0,\hat{\Omega}}=\|\Div(\eta\vec{v})\|_0$ since $\vec{v}\in\Hdiv$, and that $\|\Grad\pi\|_{0,\hat{\Omega}}=\|\check{\Grad\pi}\|_0$.
  Starting from the expression~\eqref{eq:induct} of $\vec{\alpha}_{\vec{w}}$, and since $\langle(\eta\vec{w})_{\mid\Sigma_{i'}}{\cdot}\normal_{\Sigma_{i'}},1\rangle_{\Sigma_{i'}}=(\eta\,\check{\Grad\pi},\check{\Grad\pi_{i'}})_{\Omega}$, we infer that
  \begin{equation} \label{eq:alpha2}
    |\vec{\alpha}_{\vec{w}}|=|\mathfrak{L}^{-1}\vec{\beta}_{\vec{w}}|\leq\rho(\mathfrak{L}^{-1})|\vec{\beta}_{\vec{w}}|\leq\rho(\mathfrak{L}^{-1})\left(\sum_{i'=1}^{\beta_1}\|\eta^{\frac12}\check{\Grad\pi_{i'}}\|_0^2\right)^{\nicefrac12}\|\eta^{\frac12}\check{\Grad\pi}\|_0,
  \end{equation}
  where $\rho(\mathfrak{L}^{-1})>0$ denotes the spectral radius of the symmetric positive-definite (real-valued) matrix $\mathfrak{L}^{-1}$, which is proportional to $\eta_\flat^{-1}$.
  The conclusion then follows from the $\vec{L}^2_\eta(\Omega)$-orthogonality of the decomposition~\eqref{eq:decomp2}, so that $\|\eta^{\frac12}\Grad\varphi\|_0\leq\|\eta^{\frac12}\vec{v}\|_0$, $\eta_\sharp\|\eta^{-\frac12}\Curl\vec{\psi}\|_0\leq\|\eta^{\frac12}\vec{v}\|_0$, and $\|\eta^{\frac12}\check{\Grad\pi}\|_0\leq\|\eta^{\frac12}\vec{v}\|_0$, and from the fact that $\|{\cdot}\|_0\leq\eta_\flat^{-\frac12}\|\eta^{\frac12}{\cdot}\|_0$ and $\eta_\sharp^{\frac12}\|{\cdot}\|_0\leq\eta_\sharp\|\eta^{-\frac12}{\cdot}\|_0$.
\end{proof}
\noindent
As for~\eqref{eq:web1}, the above proof of~\eqref{eq:web2} is constructive. An alternative proof (in the case $\eta\equiv 1$), based on a contradiction argument, can be found in~\cite[Thm.~3.5.3]{ACJLa:18}. Another proof (in the case of a symmetric tensor field $\eta$), leveraging the inverse mapping theorem, is available in~\cite[Thm.~6.2.5]{ACJLa:18}.
\begin{remark}[Useful variant] \label{rem:var2}
  Assume that $\vec{v}\in\Hcurl$ satisfies
  $$(\eta\vec{v},\vec{z})_{\Omega}=0\qquad\forall\vec{z}\in\Grad\big(H^1(\Omega)\cap L^2_0(\Omega)\big)\overset{\perp_\eta}{\oplus}\sharmonic.$$
  Then, revisiting the above proof, one can show that
  $$\|\eta^{\frac12}\vec{v}\|_0\leq C_{\Omega,2}\eta_\sharp^{\frac12}\|\Curl\vec{v}\|_0,$$
  where $C_{\Omega,2}>0$ is the multiplicative constant from~\eqref{hh.reg2}.
\end{remark}

The second Weber inequality enables us to define the following norm on $\csweber$:
$$\|\vec{v}\|^2_{\vec{X}_{n}}\defi\eta_\flat^{-1}\|\Div(\eta\vec{v})\|_0^2+\eta_\sharp\|\Curl\vec{v}\|_0^2+\eta_\flat^{-1}\sum_{i=1}^{\beta_1}|\langle(\eta\vec{v})_{\mid\Sigma_i}{\cdot}\normal_{\Sigma_i},1\rangle_{\Sigma_i}|^2,$$
which is equivalent to the $\|{\cdot}\|_{\vec{X}}$-norm: there is $C_{n}>0$, only depending on $\Omega$, such that
$$(C_{n}\kappa_\eta)^{-1}\|\vec{v}\|_{\vec{X}}^2\leq\|\vec{v}\|_{\vec{X}_{n}}^2\leq C_{n}\kappa_\eta\|\vec{v}\|_{\vec{X}}^2\qquad\forall\vec{v}\in\csweber.$$
Endowed with the $\|{\cdot}\|_{\vec{X}_{n}}$-norm (more precisely, with the corresponding Hermitian inner product), $\csweber$ is a Hilbert space. We are now in position to state and prove Maxwell compactness in $\csweber$. Our proof partly takes inspiration from the one of~\cite[Thm.~3.5.4]{ACJLa:18} (see also~\cite[Thm.~7.5.3]{ACJLa:18} in the case of a symmetric tensor field $\eta$).

\begin{proposition}[Maxwell compactness in $\csweber$] \label{pr:max2}
  Let $(\vec{v}_m)_{m\in\Natu}$ be a sequence of elements of $\csweber$ for which there exists a real number $C_M>0$ such that $\eta_\sharp^{-\frac12}\|\vec{v}_m\|_{\vec{X}_{n}}\leq C_M$ for all $m\in\Natu$. Then, there exists an element $\vec{v}\in\csweber$ such that, along a subsequence as $m\to\infty$, $\vec{v}_m\to\vec{v}$ strongly in $\bLL$.
\end{proposition}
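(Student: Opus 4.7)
The plan is to mirror the three-step structure used in the proof of Proposition~\ref{pr:max1}, but now relying on the second Helmholtz--Hodge decomposition~\eqref{eq:helm2} and the second Weber inequality~\eqref{eq:web2}, with integrations by parts whose boundary terms vanish for the dual reason (normal trace on $\Gamma$ for $\vec{v}_m,\vec{v}$, tangential trace on $\Gamma$ for the vector potentials).

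First I would extract weak limits. From the uniform bound $\|\vec{v}_m\|_{\vec{X}_n}\leq C_M\eta_\sharp^{\frac12}$ together with~\eqref{eq:web2}, the sequences $(\vec{v}_m)$, $(\Curl\vec{v}_m)$ are uniformly bounded in $\bLL$ and $(\Div(\eta\vec{v}_m))$ is uniformly bounded in $\LL$. By weak compactness and closedness of $\csweber$ under weak convergence (the $\eta$-weighted normal trace being continuous), one finds $\vec{v}\in\csweber$ such that, along a subsequence, $\vec{v}_m\rightharpoonup\vec{v}$ and $\Curl\vec{v}_m\rightharpoonup\Curl\vec{v}$ weakly in $\bLL$, while $\Div(\eta\vec{v}_m)\rightharpoonup\Div(\eta\vec{v})$ weakly in $\LL$.

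Second, I would decompose each $\vec{v}_m$ according to~\eqref{eq:helm2}, writing $\vec{v}_m=\Grad\varphi_m+\frac{\eta_\sharp}{\eta}\Curl\vec{\psi}_m+\vec{w}_m$ with $\varphi_m\in H^1(\Omega)\cap L^2_0(\Omega)$, $\vec{\psi}_m\in\vec{H}^1(\Omega)\cap\Hzcurl$ chosen so that~\eqref{hh.reg2} holds, and $\vec{w}_m\in\sharmonic$. The $\vec{L}^2_\eta(\Omega)$-orthogonality of the decomposition transfers the uniform $\bLL$-bound on $\vec{v}_m$ to each summand; since $\sharmonic$ is finite-dimensional, Bolzano--Weierstra\ss{} yields $\vec{w}_m\to\vec{w}\in\sharmonic$ strongly in $\bLL$. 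The Poincar\'e--Steklov inequality applied to $\varphi_m$ (recall its zero mean) and the estimate~\eqref{hh.reg2} for $\vec{\psi}_m$ give uniform $H^1$-bounds, so up to further extraction $\Grad\varphi_m\rightharpoonup\Grad\varphi$ and $\Curl\vec{\psi}_m\rightharpoonup\Curl\vec{\psi}$ weakly in $\bLL$ for some $\varphi\in H^1(\Omega)\cap L^2_0(\Omega)$ and $\vec{\psi}\in\vec{H}^1(\Omega)\cap\Hzcurl$. Uniqueness of the weak limit then identifies $\vec{v}=\Grad\varphi+\frac{\eta_\sharp}{\eta}\Curl\vec{\psi}+\vec{w}$.

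Third, I would upgrade to strong convergence. Rellich's theorem, applied along the same subsequence, yields $\varphi_m\to\varphi$ strongly in $\LL$ and $\vec{\psi}_m\to\vec{\psi}$ strongly in $\bLL$. Using the $\vec{L}^2_\eta(\Omega)$-orthogonality of the decomposition together with integrations by parts, one computes
\[
\|\eta^{\frac12}(\Grad\varphi_m-\Grad\varphi)\|_0^2=\big(\eta(\vec{v}_m-\vec{v}),\Grad(\varphi_m-\varphi)\big)_\Omega=-\big(\Div(\eta(\vec{v}_m-\vec{v})),\varphi_m-\varphi\big)_\Omega,
\]
where the boundary term vanishes since $\vec{v}_m,\vec{v}\in\Hzdiv$ (this is the key structural point distinguishing the present proof from that of Proposition~\ref{pr:max1}, where the vanishing came from $\varphi_m,\varphi\in H^1_0(\Omega)$). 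Similarly,
\[
\|\eta^{-\frac12}(\Curl\vec{\psi}_m-\Curl\vec{\psi})\|_0^2=\eta_\sharp^{-1}\big(\Curl(\vec{v}_m-\vec{v}),\vec{\psi}_m-\vec{\psi}\big)_\Omega,
\]
the boundary term vanishing because $\vec{\psi}_m,\vec{\psi}\in\Hzcurl$. A Cauchy--Schwarz inequality combined with the uniform bounds on $\Div(\eta\vec{v}_m)$ and $\Curl\vec{v}_m$ and the strong $L^2$-convergences just obtained then gives strong $\bLL$-convergence of both gradient and curl parts, hence of $\vec{v}_m$ to $\vec{v}$.

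The main obstacle is not analytic but bookkeeping: one must check that the right trace vanishes in each integration by parts (normal trace for the gradient piece, tangential trace for the curl piece), so that the Helmholtz--Hodge summands can be controlled by the compactly embedded $L^2$-factors appearing in the duality products. The $\beta_1$ circulation terms in $\|{\cdot}\|_{\vec{X}_n}$ do not enter the compactness argument explicitly; they enter only implicitly through the second Weber inequality that underlies the Step~1 bound on $(\vec{v}_m)$ in $\bLL$.
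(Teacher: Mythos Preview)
Your proposal is correct and follows essentially the same approach as the paper's proof: the same three-step structure (weak limits via~\eqref{eq:web2}, decomposition via~\eqref{eq:helm2} with Bolzano--Weierstra\ss{} on the finite-dimensional harmonic part, and Rellich plus integration by parts for the strong convergence), with the same identification of which boundary terms vanish for which reason. Your remark that the $\beta_1$ flux terms enter only implicitly through the Weber inequality in Step~1 is also exactly how the paper treats them.
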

\begin{proof}
  The proof proceeds in three steps.

  \smallskip
  {\em Step 1 (Weak convergence):} By assumption, $\|\vec{v}_m\|_{\vec{X}_{n}}\leq C_M\eta_\sharp^{\frac12}$ for all $m\in\Natu$. By definition of the $\|{\cdot}\|_{\vec{X}_{n}}$-norm, and by the second Weber inequality~\eqref{eq:web2}, this implies that (i) $(\vec{v}_m)_{m\in\Natu}$ and $(\Curl\vec{v}_m)_{m\in\Natu}$ are uniformly (in $m$) bounded in $\bLL$, and (ii) $(\Div(\eta\vec{v}_m))_{m\in\Natu}$ is uniformly (in $m$) bounded in $\LL$. Using standard (weak compactness and limit regularity) arguments, we then infer the existence of $\vec{v}\in\csweber$ such that, along a subsequence (not relabelled), (i) $\vec{v}_m\rightharpoonup\vec{v}$ and $\Curl\vec{v}_m\rightharpoonup\Curl\vec{v}$ weakly in $\bLL$, and (ii) $\Div(\eta\vec{v}_m)\rightharpoonup\Div(\eta\vec{v})$ weakly in $\LL$.

  \smallskip
  {\em Step 2 (Characterization of the limit):} For any $m\in\Natu$, since $\vec{v}_m\in\csweber\subset\bLL$, invoking the second Helmholtz--Hodge decomposition~\eqref{eq:helm2}, there exist $\varphi_m\in H^1(\Omega)\cap L^2_0(\Omega)$, $\vec{\psi}_m\in\vec{H}^1(\Omega)\cap\Hzcurl$, and $\vec{w}_m\in\sharmonic$, such that
  \begin{equation} \label{eq:decomp2m}
    \vec{v}_m=\Grad\varphi_m+\frac{\eta_\sharp}{\eta}\Curl\vec{\psi}_m+\vec{w}_m,
  \end{equation}
  with $\|\vec{\psi}_m\|_1\leq C_{\Omega,2}\|\Curl\vec{\psi}_m\|_0$ by~\eqref{hh.reg2}.
  Since the sequence $(\vec{v}_m)_{m\in\Natu}$ is uniformly (in $m$) bounded in $\bLL$, and the decomposition~\eqref{eq:decomp2m} is $\vec{L}^2_\eta(\Omega)$-orthogonal with $\eta$ essentially bounded by above and by below away from zero in $\Omega$, the three sequences $(\Grad\varphi_m)_{m\in\Natu}$, $(\Curl\vec{\psi}_m)_{m\in\Natu}$, and $(\vec{w}_m)_{m\in\Natu}$ are also uniformly (in $m$) bounded in $\bLL$.
  Given that $(\vec{w}_m)_{m\in\Natu}\subset\sharmonic$, and that the harmonic space $\sharmonic$ has finite dimension, we straightforwardly infer from the Bolzano--Weierstra\ss~theorem the existence of $\vec{w}\in\sharmonic$ such that, up to extraction (not relabelled), $\vec{w}_m\to\vec{w}$ strongly in $\bLL$.
  Let us now deal with the two remaining terms of the decomposition. By the Poincar\'e--Steklov inequality applied to $\varphi_m\in H^1(\Omega)\cap L^2_0(\Omega)$, and the estimate $\|\vec{\psi}_m\|_1\leq C_{\Omega,2}\|\Curl\vec{\psi}_m\|_0$, there holds that (i) $(\varphi_m)_{m\in\Natu}$ is uniformly bounded in $H^1(\Omega)$, and (ii) $(\vec{\psi}_m)_{m\in\Natu}$ is uniformly bounded in $\vec{H}^1(\Omega)$.
  Invoking standard (weak compactness and limit regularity) arguments, one can infer the existence of $\varphi\in H^1(\Omega)\cap L^2_0(\Omega)$ and $\vec{\psi}\in\vec{H}^1(\Omega)\cap\Hzcurl$ such that, up to extractions (not relabelled), $\Grad\varphi_m\rightharpoonup\Grad\varphi$ and $\Curl\vec{\psi}_m\rightharpoonup\Curl\vec{\psi}$ weakly in $\bLL$. By linearity, we have thus proven that, along a subsequence (not relabelled), $\vec{v}_m\rightharpoonup\Grad\varphi+\frac{\eta_\sharp}{\eta}\Curl\vec{\psi}+\vec{w}$ weakly in $\bLL$. The uniqueness of the weak limit then yields that $\vec{v}=\Grad\varphi+\frac{\eta_\sharp}{\eta}\Curl\vec{\psi}+\vec{w}$.

  \smallskip
  {\em Step 3 (Strong convergence):} By Rellich's compactness theorem, it actually holds that, along the same subsequence (not relabelled) as in Step 2, $\varphi_m\to\varphi$ strongly in $\LL$ and $\vec{\psi}_m\to\vec{\psi}$ strongly in $\bLL$.
  We now want to prove the strong convergences of $(\Grad\varphi_m)_{m\in\Natu}$ and $(\Curl\vec{\psi}_m)_{m\in\Natu}$ in $\bLL$ (we remind that we already proved strong convergence for $(\vec{w}_m)_{m\in\Natu}$). Recalling the expression of $\vec{v}\in\csweber$ we derived in Step 2, since the decomposition~\eqref{eq:decomp2m} is $\vec{L}^2_\eta(\Omega)$-orthogonal, we have
  $$\|\eta^{\frac12}(\Grad\varphi_m-\Grad\varphi)\|_0^2=\big(\eta(\vec{v}_m-\vec{v}),\Grad(\varphi_m-\varphi)\big)_{\Omega}=-\big(\Div(\eta(\vec{v}_m-\vec{v})),\varphi_m-\varphi\big)_{\Omega},$$
  where we have used that $\vec{v}_m,\vec{v}\in\Hzdiv$. By Cauchy--Schwarz inequality, combined with the fact that $\|\Div(\eta\vec{v}_m)\|_0\leq C_M\eta_\sharp$ for all $m\in\Natu$ and $\Div(\eta\vec{v}_m)\rightharpoonup\Div(\eta\vec{v})$ weakly in $\LL$ (so that $\|\Div(\eta\vec{v})\|_0\leq C_M\eta_\sharp$), we infer that
  $$\|\eta^{\frac12}(\Grad\varphi_m-\Grad\varphi)\|_0^2\leq 2C_M\eta_\sharp\|\varphi_m-\varphi\|_0.$$
  Since $\varphi_m\to\varphi$ strongly in $\LL$, passing to the limit $m\to\infty$ we conclude that $\Grad\varphi_m\to\Grad\varphi$ strongly in $\bLL$. By the very same arguments, there holds
  $$\|\eta^{-\frac12}(\Curl\vec{\psi}_m-\Curl\vec{\psi})\|_0^2=\eta_\sharp^{-1}\big(\vec{v}_m-\vec{v},\Curl(\vec{\psi}_m-\vec{\psi})\big)_{\Omega}=\eta_\sharp^{-1}\big(\Curl(\vec{v}_m-\vec{v}),\vec{\psi}_m-\vec{\psi}\big)_{\Omega},$$
  where we have used that $\vec{\psi}_m,\vec{\psi}\in\Hzcurl$. This implies that
  $$\|\eta^{-\frac12}(\Curl\vec{\psi}_m-\Curl\vec{\psi})\|_0^2\leq 2C_M\eta_\sharp^{-1}\|\vec{\psi}_m-\vec{\psi}\|_0,$$
  which eventually yields that $\Curl\vec{\psi}_m\to\Curl\vec{\psi}$ strongly in $\bLL$.
  Thus, along a subsequence (not relabelled), $\vec{v}_m\to\vec{v}$ strongly in $\bLL$, which concludes the proof.
\end{proof}

\section{Discrete setting} \label{se:dis.set}

From now on, we assume that the domain $\Omega\subset\Real^3$ is a (Lipschitz) polyhedron.

\subsection{Polyhedral meshes} \label{sse:pol.mesh}

We consider meshes $\mathcal{M}_h\defi(\t_h,\f_h)$ of $\Omega\subset\Real^3$ in the sense of~\cite[Def.~1.4]{DPDro:20}.

The set $\t_h$ is a finite collection of disjoint, open Lipschitz polyhedra $T$ (called {\em mesh cells}) such that $\overline{\Omega} = \bigcup_{T\in\t_h} \overline{T}$.
The subscript $h$ here refers to the mesh size, defined by $h \defi \max_{T\in\t_h} h_T$, where $h_T\defi\max_{\vec{x},\vec{y}\in\overline{T}}|\vec{x}-\vec{y}|$ denotes the diameter of the mesh cell $T$. The set $\f_h$ is a finite collection of disjoint, two-dimensional connected subsets of $\overline{\Omega}$ (called {\em mesh faces}) such that, for any $F\in\f_h$, (i) $F$ is a relatively open Lipschitz polygonal subset of an affine hyperplane, and (ii) either there exist two distinct mesh cells $T^+,T^-\in\t_h$ such that $\overline{F}\subseteq\partial T^+\cap\partial T^-$ ($F$ is then an {\em interface}), or there exists one mesh cell $T\in\t_h$ such that $\overline{F}\subseteq\partial T\cap\Gamma$ ($F$ is then a {\em boundary face}). The set of mesh faces is further assumed to satisfy $\bigcup_{T\in\t_h}\partial T=\bigcup_{F\in\f_h}\overline{F}$. 
For all $F\in\f_h$, we let $h_F\defi\max_{\vec{x},\vec{y}\in\overline{F}}|\vec{x}-\vec{y}|$ denote the diameter of the face $F$.
Interfaces are collected in the set $\f_h^\circ$, whereas boundary faces are collected in the set $\f_h^\partial$.
For all $T\in\t_h$, we denote by $\f_T$ the subset of $\f_h$ collecting those mesh faces lying on the boundary of $T$, so that $\partial T=\bigcup_{F\in\f_T}\overline{F}$.
For all $T\in\t_h$, consistently with our notation so far, we let $\normal_{\partial T}$ denote the unit vector field, defined almost everywhere on $\partial T$, normal to $\partial T$ and pointing outward from $T$. For all $F\in\f_T$, we also let $\normal_{T,F}\defi\normal_{\partial T\mid F}$ denote the (constant) unit vector normal to the hyperplane containing $F$ and pointing outward from $T$.
Finally, for all $F\in\f_h$, we define $\normal_F$ as the unit (constant) vector normal to $F$ such that either $\normal_F\defi\normal_{T^+,F}$ if $F\subset\partial T^+\cap\partial T^-\in\f_h^\circ$, or $\normal_F\defi\normal_{T,F}(=\normal_{\mid F})$ if $F\subset\partial T\cap\Gamma\in\f_h^\partial$. For further use, we also let, for all $T\in\t_h$ and all $F\in\f_T$, $\varepsilon_{T,F}\in\{-1,1\}$ be such that $\varepsilon_{T,F}\defi\normal_{T,F}{\cdot}\normal_F$.

When $\beta_1>0$, for all $i\in\{1,\ldots,\beta_1\}$, we assume that there exists a subset $\f^\circ_{h,\Sigma_i}$ of $\f_h^\circ$ such that $\overline{\Sigma_i}=\bigcup_{F\in\f^\circ_{h,\Sigma_i}}\overline{F}$, and for which $\normal_{F}=\normal_{\Sigma_i\mid F}$ for all $F\in\f^\circ_{h,\Sigma_i}$. This ensures that we do associate the superscript $+$ to the side of $\hat{\Omega}$ (with respect to $\Sigma_i$) for which $\normal_{\Sigma_i}$ is outward, consistently with our assumption from Section~\ref{sse:fun.set}. Remark that, since the cutting surfaces are piecewise planar, the set $\hat{\Omega}$ is indeed pseudo-Lipschitz. We also set
$$\hat{\f}_h^\circ\defi\f_h^\circ\setminus\bigcup_{i=1}^{\beta_1}\f_{h,\Sigma_i}^\circ.$$
When $\beta_2>0$, for all $j\in\{0,\ldots,\beta_2\}$, we let $\f^\partial_{h,\Gamma_j}$ denote the subset of $\f_h^\partial$ such that $\Gamma_j=\bigcup_{F\in\f^\partial_{h,\Gamma_j}}\overline{F}$.

At the discrete level, the (real-valued) parameter $\eta:\Omega\to[\eta_\flat,\eta_\sharp]$ introduced in~\eqref{eq:eta.c} is assumed to be piecewise constant on the partition $\t_h$ of the domain $\Omega$, and we let
\begin{equation} \label{eq:eta.d}
  \eta_\flat\leq\eta_T\defi\eta_{\mid T}\leq\eta_\sharp\qquad\forall T\in\t_h.
\end{equation}

Let $\Hindex\subset(0,\infty)$ be a countable set of mesh sizes having $0$ as its unique accumulation point. When studying asymptotic properties with respect to the mesh size, one has to adopt a measure of regularity for refined mesh families $(\mathcal{M}_h)_{h\in\Hindex}$. We classically follow~\cite[Def.~1.9]{DPDro:20}, in which regularity for refined mesh families is quantified through a uniform (with respect to $h$) parameter $\varrho\in(0,1)$, called {\em mesh regularity parameter}.
In a nutshell, it is assumed that, for all $h\in\Hindex$, there exists a matching tetrahedral submesh of $\mathcal{M}_h$, (i) which is uniformly shape-regular, and (ii) whose elements have a diameter that is uniformly comparable to the diameter of the cell in $\mathcal{M}_h$ they belong to.
In what follows, we write $a\lesssim b$ (resp.~$a\gtrsim b$) in place of $a\leq Cb$ (resp.~$a\geq Cb$), if $C>0$ only depends on $\Omega$, on the mesh regularity parameter $\varrho$, and (if need be) on the underlying polynomial degree, but is independent of both $h$ and $\eta$.
In particular, for regular mesh families $(\mathcal{M}_h)_{h\in\Hindex}$, for all $h\in\Hindex$, and for all $T\in\t_h$, there holds ${\rm card}(\f_T)\lesssim 1$, as well as $h_T\lesssim h_F\leq h_T$ for all $F\in\f_T$ (cf.~\cite[Lem.~1.12]{DPDro:20}).
Finally, for $X\in\t_h\cup\f_h$, we let $\vec{x}_X\in\Real^3$ be some point inside $X$ such that $X$ contains a ball/disk centered at $\vec{x}_X$ of radius $h_X\lesssim r_X\leq h_X$. The existence of such a point is guaranteed for regular mesh families.

\subsection{Polynomial spaces}

For $\ell\in\Natu$ and $m\in\{2,3\}$, we let $\mathcal{P}^\ell_m$ denote the linear space of $m$-variate $\Comp$-valued polynomials of total degree at most $\ell$, with the convention that, for any $m$, $\mathcal{P}^0_m$ is identified to $\Comp$, and $\mathcal{P}^{-1}_m\defi\{0\}$.
For any $X\in\t_h\cup\f_h$, we denote by $\Poly^\ell(X)$ the linear space spanned by the restrictions to $X$ of the polynomials in $\mathcal{P}^\ell_3$. Letting $m\in\{2,3\}$ be the dimension of $X$, $\Poly^\ell(X)$ is isomorphic to $\mathcal{P}^\ell_m$ (cf.~\cite[Prop.~1.23]{DPDro:20}). We let $\pi^{\ell}_{X}$ denote the $\LL[X]$-orthogonal projector onto $\Poly^{\ell}(X)$. For convenience, we also let $\bPoly^\ell(X)\defi\Poly^\ell(X)^m$ (i.e.~for all $T\in\t_h$, $\bPoly^\ell(T)=\Poly^\ell(T)^3$, and for all $F\in\f_h$, $\bPoly^\ell(F)=\Poly^\ell(F)^2$), and we define $\vec{\pi}^{\ell}_X$ as the $\bLL[X]$-orthogonal projector onto $\bPoly^{\ell}(X)$.

For any $F\in\f_h$, we let $H_F$ denote the hyperplane containing $F$, that we orient according to the normal $\normal_F$. For $w:F\to\Comp$, we let $\Grad_Fw:F\to\Comp^2$ denote the (tangential) gradient of $w$. Likewise, for $\vec{w}:F\to\Comp^2$, $\Div_F\vec{w}:F\to\Comp$ denotes the (tangential) divergence of $\vec{w}$. We also let $\mathbf{rot}_F w:F\to\Comp^2$ be such that
$$\mathbf{rot}_F w\defi\big(\Grad_Fw\big)^\perp,$$
where $\vec{z}^\perp$ is the rotation of angle $-\pi/2$ of $\vec{z}$ in the oriented hyperplane $H_F$.
Let us introduce the following subspaces of $\bPoly^{\ell}(F)$, $\ell\in\Natu$:
$$\pRF{\ell}\defi\mathbf{rot}_F\big(\Poly^{\ell+1}(F)\big),\qquad\kRF{\ell}\defi\Poly^{\ell-1}(F)(\vec{x}-\vec{x}_F),$$
where, for $\vec{x}\in F$, $(\vec{x}-\vec{x}_F)\subset H_F$ is identified to its two-dimensional tangential counterpart (cf.~Remark~\ref{re:tvf}). The polynomial space $\kRF{\ell}$ is the so-called {\em Koszul complement} of $\pRF{\ell}$.
As a consequence of the {\em homotopy formula} (cf.~\cite[Thm.~7.1]{Arnol:18}), the following (non $\bLL[F]$-orthogonal) polynomial decomposition holds true:
\begin{equation} \label{eq:decomp.F}
  \bPoly^\ell(F)=\pRF{\ell}\oplus\kRF{\ell}.
\end{equation}
Furthermore, by exactness of the (tangential) polynomial de Rham complex, the differential mapping $\Div_F:\kRF{\ell}\to\Poly^{\ell-1}(F)$ is an isomorphism.

For any $T\in\t_h$ now, we introduce the following subspaces of $\bPoly^{\ell}(T)$, $\ell\in\Natu$:
\begin{equation*}
  \begin{aligned}
    \pGT{\ell}&\defi\Grad\big(\Poly^{\ell+1}(T)\big),\qquad&\kGT{\ell}&\defi\bPoly^{\ell-1}(T){\times}(\vec{x}-\vec{x}_T),\\
    \pRT{\ell}&\defi\Curl\big(\bPoly^{\ell+1}(T)\big),\qquad&\kRT{\ell}&\defi\Poly^{\ell-1}(T)(\vec{x}-\vec{x}_T).
  \end{aligned}
\end{equation*}
The two polynomial spaces $\kGT{\ell}$ and $\kRT{\ell}$ are the Koszul complements of $\pGT{\ell}$ and $\pRT{\ell}$, respectively. By the homotopy formula (cf.~\cite[Thm.~7.1]{Arnol:18}), the two following (non $\bLL[T]$-orthogonal) polynomial decompositions hold true:
\begin{equation} \label{eq:decomp.T}
  \bPoly^{\ell}(T)=\pGT{\ell}\oplus\kGT{\ell}=\pRT{\ell}\oplus\kRT{\ell}.
\end{equation}
By exactness of the polynomial de Rham complex, the differential mappings $\Curl:\kGT{\ell}\to\pRT{\ell-1}$ and $\Div:\kRT{\ell}\to\Poly^{\ell-1}(T)$ are isomorphisms. 

\begin{remark}[Inverses of cell-wise differential mappings] \label{re:inv}
  By the inverse mapping theorem, we know that the inverse mappings $\Curl^{-1}\defi(\Curl)^{-1}:\pRT{\ell-1}\to\kGT{\ell}$ and $\vec{\Div}^{-1}\defi(\Div)^{-1}:\Poly^{\ell-1}(T)\to\kRT{\ell}$ are bounded. From a quantitative viewpoint, one can show that
  \begin{equation} \label{eq:nim}
    \|\Curl^{-1}\vec{c}\|_{0,T}\lesssim h_T\|\vec{c}\|_{0,T}\quad\forall\vec{c}\in\pRT{\ell-1},\qquad\|\vec{\Div}^{-1} d\|_{0,T}\lesssim h_T\|d\|_{0,T}\quad\forall d\in\Poly^{\ell-1}(T).
  \end{equation}
  This has been proven for general polyhedral cells by a transport/scaling argument in~\cite[Lem.~9]{DPDro:23}. For star-shaped polyhedral cells, it is possible to obtain explicit multiplicative constants (cf.~\cite[Lem.~2.2]{CDPLe:22} for $\Curl^{-1}$, and Appendix~\ref{ap:nim} for $\vec{\Div}^{-1}$).
\end{remark}

For all $T\in\t_h$ and all $F\in\f_T$, since for $v:T\to\Comp$ we have $\normal_F{\times}((\Grad v)_{\mid F}{\times}\normal_F)=\Grad_F(v_{\mid F})$ (where $\normal_F{\times}((\Grad v)_{\mid F}{\times}\normal_F)$ is identified to its two-dimensional tangential counterpart), there holds
\begin{equation} \label{eq:rttr}
  \pGT{\ell}_{\mid F}{\times}\normal_F=\pRF{\ell},
\end{equation}
where vectors in $\pGT{\ell}_{\mid F}{\times}\normal_F$ are identified to their two-dimensional counterparts.
In the same vein, since for $\vec{v}:T\to\Comp^3$ we have $(\Curl\vec{v})_{\mid F}{\cdot}\normal_F=\Div_F(\vec{v}_{\mid F}{\times}\normal_F)$ (where $\vec{v}_{\mid F}{\times}\normal_F$ is identified to its two-dimensional tangential counterpart), and $\Div_F\big(\bPoly^{\ell+1}(F)\big)=\Poly^{\ell}(F)$ (by~\eqref{eq:decomp.F} with $\ell\leftarrow\ell+1$ and recalling that $\Div_F:\kRF{\ell+1}\to\Poly^{\ell}(F)$ is an isomorphism), there holds
\begin{equation} \label{eq:ntr}
  \pRT{\ell}_{\mid F}{\cdot}\normal_F=\Poly^{\ell}(F).
\end{equation}

For the $3$-variate $\Comp$-valued polynomial space $\Poly^\ell$, and $\Comp^3$-valued polynomial space $\bPoly^{\ell}$,
we introduce the following broken versions:
\begin{align*}
  \Poly^\ell(\t_h)\defi&\{v\in\LL\mid v_{\mid T}\in\Poly^\ell(T)\,\forall T\in\t_h\},\\
  \bPoly^\ell(\t_h)\defi&\{\vec{v}\in\bLL\mid\vec{v}_{\mid T}\in\bPoly^\ell(T)\,\forall T\in\t_h\}.
\end{align*}
We classically define on $\Poly^\ell(\t_h)$ the broken gradient operator $\Grad_h$, and on $\bPoly^{\ell}(\t_h)$ the broken rotational and divergence operators $\Curl_h$ and $\Div_h$.
We let $\pi^\ell_h$ (resp.~$\vec{\pi}^\ell_h$) be the $\LL$-orthogonal (resp.~$\bLL$-orthogonal) projector onto $\Poly^\ell(\t_h)$ (resp.~$\bPoly^{\ell}(\t_h)$).

\subsection{$\vec{H}(\Curl)\cap\vec{H}(\Div_\eta)$-like hybrid spaces}

Let $\ell\in\Natu$ be a given polynomial degree. We define the following discrete counterpart of the space $\cweber=\Hcurl\cap\Hdiv$:
\begin{equation} \label{def:DOFs}
    \dweber \defi\left\{ \vh \defi \Big((\vT)_{T\in\t_h},(\vFt)_{F\in\f_h},(\vFn)_{F\in\f_h}\Big)
     \st 
     \begin{alignedat}{2}
	\vT&\in \bPoly^{\ell}(T) &\quad& \forall T\in\t_h
  		\\
  		\vFt&\in \qRF{\ell} &\quad& \forall F\in\f_h
  		\\
  		\vFn& \in \Poly^{\ell}(F) &\quad& \forall F\in\f_h
      \end{alignedat}
     \right\},
\end{equation}
where the (possibly trimmed) polynomial space $\qRF{\ell}$ satisfies
\begin{equation} \label{eq:qRF}
  \pRF{\ell}\subseteq\qRF{\ell}\subseteq\bPoly^{\ell}(F).
\end{equation} 
We let $\vec{\pi}_{\vec{\mathcal{Q}},F}^{\ell}$ denote the $\bLL[F]$-orthogonal projector onto $\qRF{\ell}$.
In~\eqref{def:DOFs}, $\vFt$ stands for the rotated tangential trace unknown, whereas $\vFn$ stands for the $\eta$-weighted normal trace unknown.

Given a mesh cell $T\in\t_h$, we denote by $\dweberT$ the restriction of $\dweber$ to $T$, and by
$$\vTF\defi\big(\vT,(\vFt)_{F\in\f_T},(\vFn)_{F\in\f_T}\big)\in\dweberT$$
the restriction of the generic element $\vh\in\dweber$ to the cell $T$. 
For $\vh\in\dweber$, we let $\vTh$ (not underlined) be the broken polynomial vector field in $\bPoly^\ell(\t_h)$ such that
$$(\vTh)_{\mid T} \defi \vT \qquad\forall T\in\t_h.$$
Also, we let $\vec{\gamma}_{\vec{\tau},h}:\dweber\to\bLL[\Gamma]$ denote the discrete rotated tangential trace operator, and $\gamma_{n,h}:\dweber\to\LL[\Gamma]$ be the discrete $\eta$-weighted normal trace operator such that, for all $\vh\in\dweber$,
\begin{equation} \label{eq:trop}
  \vec{\gamma}_{\vec{\tau},h}(\vh)_{\mid F}\defi\vFt\qquad\text{and}\qquad\gamma_{n,h}(\vh)_{\mid F}\defi\vFn\qquad\forall F\in\f_h^\partial.
\end{equation}
Note that, for almost every $\vec{x}\in\Gamma$, $\vec{\gamma}_{\vec{\tau},h}(\vh)(\vec{x})$ is a vector in $\Comp^2$.
The discrete trace operators enable us to define the following subspaces of $\dweber$, which are the discrete counterparts of $\cfweber$ and $\csweber$, respectively defined in~\eqref{eq:cfweb} and~\eqref{eq:csweb}:
\begin{equation} \label{eq:dweber}
  \dfweber\defi\left\{\vh\in\dweber\mid\vec{\gamma}_{\vec{\tau},h}(\vh)\equiv\vec{0}\right\},\qquad\dsweber\defi\left\{\vh\in\dweber\mid\gamma_{n,h}(\vh)\equiv 0\right\}.
\end{equation}

For any $T\in\t_h$, we introduce the following local Hermitian, positive semi-definite, sesquilinear forms: for all $\wTF,\vTF\in\dweberT$,
\begin{equation} \label{eq:stab}
  \begin{alignedat}{1}
    s_{\Curl,T}(\wTF,\vTF)&\defi\sum_{F\in\f_T}h_F^{-1}\left(\big(\vec{\pi}^{\ell}_{\vec{\mathcal{Q}},F}\big(\vec{w}_{T\mid F}{\times}\normal_F\big) - \wFt\big),\big(\vec{\pi}^{\ell}_{\vec{\mathcal{Q}},F}\big(\vec{v}_{T\mid F}{\times}\normal_F\big) - \vFt\big)\right)_F,\\
    s_{\Div,T}(\wTF,\vTF)&\defi\sum_{F\in\f_T}h_F^{-1}\Big(\big(\eta_T\vec{w}_{T\mid F}{\cdot}\normal_F - \wFn\big),\big(\eta_T\vec{v}_{T\mid F}{\cdot}\normal_F - \vFn\big)\Big)_F,
  \end{alignedat}
\end{equation}
where $\vec{w}_{T\mid F}{\times}\normal_F$ and $\vec{v}_{T\mid F}{\times}\normal_F$ are identified to their two-dimensional (tangential) counterparts. Based on~\eqref{eq:stab}, we next define $\vec{H}(\Curl)$- and $\vec{H}(\Div_\eta)$-like hybrid semi-norms: for all $\vh\in\dweber$,
\begin{equation} \label{eq:senocu}
  |\vh|_{\Curl,h}^2\defi\sum_{T\in\t_h}\left(\|\Curl\vT\|_{0,T}^2+s_{\Curl,T}(\vTF,\vTF)\right),
\end{equation}
and
\begin{equation} \label{eq:senodi}
  |\vh|_{\Div,h}^2\defi\sum_{T\in\t_h}\left(\|\Div(\eta_T\vT)\|_{0,T}^2+s_{\Div,T}(\vTF,\vTF)\right).
\end{equation}
Finally, we equip the discrete space $\dweber$ with the following weighted norm:
\begin{equation} \label{eq:no}
  \|\vh\|_{\vec{X},h}^2\defi\|\eta^{\frac12}\vTh\|_0^2+\eta_\flat^{-1}|\vh|_{\Div,h}^2+\eta_\sharp|\vh|_{\Curl,h}^2.
\end{equation}

Notice in~\eqref{eq:senocu} the deliberate presence, within the least-squares face penalty terms of $s_{\Curl,T}$, of the tangential face projection $\vec{\pi}^{\ell}_{\vec{\mathcal{Q}},F}\big(\vec{v}_{T\mid F}{\times}\normal_F\big)$. As will be made precise in Lemma~\ref{le:opc}, this projection aims at guaranteeing the optimal (polynomial) consistency of the local Hermitian form $s_{\Curl,T}$, making of the latter a ready-to-use (optimal) stabilization for hybrid schemes. It is quite crucial to note that the introduction of this face projection does not jeopardize stability, as is made clear in Lemma~\ref{le:fuco} below. This latter fact essentially relies on the assumption~\eqref{eq:qRF} on the tangential face space. Similar considerations actually apply to~\eqref{eq:senodi} and $s_{\Div,T}$ but with the difference that, since the normal face space is chosen to be $\Poly^{\ell}(F)$ and $\eta$ is piecewise constant on $\t_h$, then $\pi^{\ell}_F(\eta_T\vec{v}_{T\mid F}{\cdot}\normal_F)=\eta_T\vec{v}_{T\mid F}{\cdot}\normal_F$ for $\vT\in\bPoly^{\ell}(T)$. The heuristics behind the choice of face spaces is further motivated in Remark~\ref{rem:fac.spa}.
\begin{lemma}[Full control of tangential jumps] \label{le:fuco}
  For all $\vh\in\dweber$, and all $T\in\t_h$, the following holds true:
  \begin{equation} \label{eq:withoutproj}
    \left(\sum_{F\in\f_T}h_F^{-1}\|\vec{v}_{T\mid F}{\times}\normal_F-\vFt\|_{0,F}^2\right)^{\nicefrac12}\lesssim\|\Curl\vT\|_{0,T}+s_{\Curl,T}(\vTF,\vTF)^{\nicefrac12}.
  \end{equation}
\end{lemma}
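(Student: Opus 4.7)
The key mismatch is that $s_{\Curl,T}$ only controls $\vec{\pi}^{\ell}_{\vec{\mathcal{Q}},F}\bigl(\vec{v}_{T\mid F}{\times}\normal_F\bigr) - \vFt$, not the full difference $\vec{v}_{T\mid F}{\times}\normal_F - \vFt$. I would split the latter by the triangle inequality as
$$\vec{v}_{T\mid F}{\times}\normal_F - \vFt = \Bigl(\vec{v}_{T\mid F}{\times}\normal_F - \vec{\pi}^{\ell}_{\vec{\mathcal{Q}},F}\bigl(\vec{v}_{T\mid F}{\times}\normal_F\bigr)\Bigr) + \Bigl(\vec{\pi}^{\ell}_{\vec{\mathcal{Q}},F}\bigl(\vec{v}_{T\mid F}{\times}\normal_F\bigr) - \vFt\Bigr),$$
and multiply by $h_F^{-1/2}$. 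The second summand, squared and summed over $F\in\f_T$, is exactly $s_{\Curl,T}(\vTF,\vTF)$ by~\eqref{eq:stab}. All the work therefore lies in bounding the first summand (the projection-orthogonal remainder) by $h_F^{1/2}\|\Curl\vT\|_{0,T}$.

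For the remainder, since $\pRF{\ell}\subseteq\qRF{\ell}$ by~\eqref{eq:qRF}, the best-approximation property of the $\bLL[F]$-orthogonal projector yields
$$\bigl\|\vec{v}_{T\mid F}{\times}\normal_F - \vec{\pi}^{\ell}_{\vec{\mathcal{Q}},F}\bigl(\vec{v}_{T\mid F}{\times}\normal_F\bigr)\bigr\|_{0,F} \leq \inf_{\vec{r}\in\pRF{\ell}}\bigl\|\vec{v}_{T\mid F}{\times}\normal_F - \vec{r}\bigr\|_{0,F}.$$
Since $\vT\in\bPoly^{\ell}(T)$ gives $\vec{v}_{T\mid F}{\times}\normal_F\in\bPoly^{\ell}(F)$, I would decompose $\vec{v}_{T\mid F}{\times}\normal_F = \vec{r} + \vec{k}$ along~\eqref{eq:decomp.F} with $\vec{r}\in\pRF{\ell}$, $\vec{k}\in\kRF{\ell}$, so that the previous infimum is bounded by $\|\vec{k}\|_{0,F}$. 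The crucial identity $\Div_F(\vec{v}_{T\mid F}{\times}\normal_F) = (\Curl\vT)_{\mid F}\cdot\normal_F$ (used already in~\eqref{eq:ntr}) together with $\Div_F\circ\mathbf{rot}_F\equiv 0$ gives $\Div_F\vec{k} = (\Curl\vT)_{\mid F}\cdot\normal_F$.

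At this point I would invoke the inverse-type bound for the isomorphism $\Div_F:\kRF{\ell}\to\Poly^{\ell-1}(F)$ (the face analog of~\eqref{eq:nim}, obtained by the same transport/scaling argument used for cells), which yields $\|\vec{k}\|_{0,F}\lesssim h_F\|\Div_F\vec{k}\|_{0,F}$, followed by a standard polynomial discrete trace inequality $\|(\Curl\vT)_{\mid F}\|_{0,F}\lesssim h_F^{-1/2}\|\Curl\vT\|_{0,T}$. Chaining these three estimates delivers $h_F^{-1/2}\|\vec{v}_{T\mid F}{\times}\normal_F - \vec{\pi}^{\ell}_{\vec{\mathcal{Q}},F}(\vec{v}_{T\mid F}{\times}\normal_F)\|_{0,F}\lesssim\|\Curl\vT\|_{0,T}$. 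Summation over $F\in\f_T$ (using $\mathrm{card}(\f_T)\lesssim 1$ from mesh regularity) and combination with the $s_{\Curl,T}^{1/2}$ control of the projected part concludes the proof. The only real delicate point is isolating, at the outset, the projection-orthogonal part and recognizing that the assumption $\pRF{\ell}\subseteq\qRF{\ell}$ is precisely what permits reducing the analysis to the (divergence-free) $\mathbf{rot}_F$-range, thereby eliminating the $\vec{r}$-component and leaving only a Koszul residual controlled by $\Curl\vT$.
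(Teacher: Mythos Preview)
Your argument is correct, but it takes a genuinely different route from the paper's proof. Both proofs begin with the same triangle-inequality split into the projection-orthogonal part $(\vec{I}-\vec{\pi}^{\ell}_{\vec{\mathcal{Q}},F})(\vec{v}_{T\mid F}{\times}\normal_F)$ and the part controlled by $s_{\Curl,T}$, and both exploit $\pRF{\ell}\subseteq\qRF{\ell}$ to reduce the first part to a Koszul-type residual governed by $\Curl\vT$. The difference is \emph{where} the Koszul decomposition is performed. The paper works at the \emph{cell} level: it writes $\vT=(\vT-\Curl^{-1}(\Curl\vT))+\Curl^{-1}(\Curl\vT)$ with the first summand in $\pGT{\ell}$, invokes~\eqref{eq:rttr} so that its rotated tangential trace already lies in $\pRF{\ell}\subseteq\qRF{\ell}$, and hence $(\vec{I}-\vec{\pi}^{\ell}_{\vec{\mathcal{Q}},F})(\vec{v}_{T\mid F}{\times}\normal_F)=(\vec{I}-\vec{\pi}^{\ell}_{\vec{\mathcal{Q}},F})\big(\Curl^{-1}(\Curl\vT)_{\mid F}{\times}\normal_F\big)$; the conclusion then follows from the cell-level bound~\eqref{eq:nim} on $\Curl^{-1}$ together with a discrete trace inequality. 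You instead work at the \emph{face} level: you decompose $\vec{v}_{T\mid F}{\times}\normal_F$ directly via~\eqref{eq:decomp.F}, identify the $\kRF{\ell}$-component through the trace identity $\Div_F(\vec{v}_{T\mid F}{\times}\normal_F)=(\Curl\vT)_{\mid F}{\cdot}\normal_F$, and invoke the two-dimensional analogue of~\eqref{eq:nim} for $\Div_F^{-1}:\Poly^{\ell-1}(F)\to\kRF{\ell}$. The paper's route has the advantage of using only tools already stated in the text (the cell bound~\eqref{eq:nim}), whereas your route requires the face-level inverse estimate, which is not written down explicitly but follows by the same scaling argument. Conversely, your argument is arguably more intrinsic to the face and avoids passing through the auxiliary cell field $\Curl^{-1}(\Curl\vT)$.
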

\begin{proof}
  By Remark~\ref{re:inv}, notice that $\big(\vT-\Curl^{-1}(\Curl\vT)\big)\in\pGT{\ell}$. Hence, by~\eqref{eq:rttr}, for any $F\in\f_T$,
  $$\big(\vec{v}_{T\mid F}{\times}\normal_F-\Curl^{-1}(\Curl\vT)_{\mid F}{\times}\normal_F\big)\in\pRF{\ell}.$$
  Since $\pRF{\ell}\subseteq\qRF{\ell}$ owing to~\eqref{eq:qRF}, we then infer that
  $$\vec{v}_{T\mid F}{\times}\normal_F-\vFt=\big(\vec{I}-\vec{\pi}^{\ell}_{\vec{\mathcal{Q}},F}\big)\big(\Curl^{-1}(\Curl\vT)_{\mid F}{\times}\normal_F\big)+\vec{\pi}^\ell_{\vec{\mathcal{Q}},F}\big(\vec{v}_{T\mid F}{\times}\normal_F\big)-\vFt.$$
  By the triangle inequality, and the $\bLL[F]$-optimality of the orthogonal projector $\vec{\pi}^{\ell}_{\vec{\mathcal{Q}},F}$, we get
  \begin{multline*}
    h_F^{-\frac12}\|\vec{v}_{T\mid F}{\times}\normal_F-\vFt\|_{0,F}\leq h_F^{-\frac12}\|\Curl^{-1}(\Curl\vT)_{\mid F}{\times}\normal_F\|_{0,F}\\+h_F^{-\frac12}\|\vec{\pi}^{\ell}_{\vec{\mathcal{Q}},F}\big(\vec{v}_{T\mid F}{\times}\normal_F\big)-\vFt\|_{0,F},
  \end{multline*}
  which, in turn, by a discrete trace inequality (cf.~e.g.~\cite[Lem.~1.32]{DPDro:20}) combined to mesh regularity, the estimate~\eqref{eq:nim} on the local norm of $\Curl^{-1}$, and the expression~\eqref{eq:stab} of $s_{\Curl,T}$, yields~\eqref{eq:withoutproj}.
\end{proof}
\begin{remark}[Face spaces] \label{rem:fac.spa}
  For a cell unknown $\vT$ in $\bPoly^{\ell}(T)$, the choice of the tangential face space $\qRF{\ell}$ satisfying~\eqref{eq:qRF} is driven by the following stability argument. Locally to a mesh cell $T\in\t_h$, the face projection $\vec{\pi}^{\ell}_{\vec{\mathcal{Q}},F}\big(\vec{v}_{T\mid F}{\times}\normal_F\big)$ for $F\in\f_T$ must enable to control, via least-squares face penalty, (at least) the rotated tangential trace of the $\pGT{\ell}$-part of $\vT$ (i.e.~$\vT-\Curl^{-1}(\Curl\vT)$). As a matter of fact, $\pGT{\ell}$ is precisely the kernel of the rotational operator in $\bPoly^{\ell}(T)$. Since, by~\eqref{eq:rttr}, $\pGT{\ell}_{\mid F}{\times}\normal_F=\pRF{\ell}$, this motivates the choice of $\qRF{\ell}$ such that $\pRF{\ell}\subseteq\qRF{\ell}\subseteq\bPoly^\ell(F)$, the space $\pRF{\ell}$ being the smallest admissible set ensuring stability.
  If one wants to adapt the argument to the normal face space, one must now control with a face projection of $\eta_T\vec{v}_{T\mid F}{\cdot}\normal_F$ (at least) the normal trace of the $\pRT{\ell}$-part of $\eta_T\vT$ (i.e.~$\eta_T\vT-\vec{\Div}^{-1}(\Div(\eta_T\vT))$). As a matter of fact, $\eta_T$ is a constant, so that $\eta_T\vT\in\bPoly^{\ell}(T)$, and $\pRT{\ell}$ is precisely the kernel of the divergence operator in $\bPoly^{\ell}(T)$. Since, by~\eqref{eq:ntr}, $\pRT{\ell}_{\mid F}{\cdot}\normal_F=\Poly^{\ell}(F)$, this means that choosing $\Poly^{\ell}(F)$ as the normal face space is actually optimal.
\end{remark}

For $q>2$, we define the following global reduction operator: $\underline{\mathbbmsl{I}}^{\ell}_h:\cweber\cap\vec{L}^q(\Omega)\to\dweber$ is such that, for all $\vec{v}\in\cweber\cap\vec{L}^q(\Omega)$,
\begin{equation} \label{def:Idch}
    \underline{\mathbbmsl{I}}^{\ell}_h(\vec{v}) \defi \left(
    \big(\vec{\pi}_{T}^{\ell}(\vec{v}_{\mid T})\big)_{T\in\t_h},
    \big(\vec{\pi}_{\vec{\mathcal{Q}},F}^{\ell}(\vec{v}_{\mid F}{\times}\normal_F)\big)_{F\in\f_h},\big(\pi_F^{\ell}((\eta\vec{v})_{\mid F}{\cdot}\normal_F)\big)_{F\in\f_h}
    \right),
\end{equation}
where $\vec{v}_{\mid F}{\times}\normal_F$ is identified to its two-dimensional (tangential) counterpart.
\begin{remark}[Regularity]
  Following~\cite{ErnGu:22}, for $\eta$ satisfying~\eqref{eq:eta.c}, the regularity $\vec{v}\in\cweber\cap\vec{L}^q(\Omega)$ is sufficient to give a (weak) meaning to the face polynomial projections in~\eqref{def:Idch}. Besides, this regularity is typically satisfied by the solutions to the Maxwell system with piecewise smooth parameter $\eta$ as in~\eqref{eq:eta.d}. Indeed, functions in $\cfweber$ or $\csweber$ with piecewise smooth $\eta$ belong to $\vec{H}^s(\Omega)$ for some $s>0$ (cf.~\cite{CoDaN:99,Jochm:99,BGLud:13}), i.e.~by Sobolev embedding, to $\vec{L}^q(\Omega)$ for some $q>2$.
\end{remark}
\noindent
For all $T\in\t_h$, we also introduce the local reduction operator: $\underline{\mathbbmsl{I}}^{\ell}_T:\Hcurl[T]\cap\Hdiv[T]\cap\vec{L}^q(T)\to\dweberT$ is such that, for all $\vec{v}\in\Hcurl[T]\cap\Hdiv[T]\cap\vec{L}^q(T)$,
\begin{equation} \label{def:Idch.loc}
    \underline{\mathbbmsl{I}}^{\ell}_T(\vec{v}) \defi \left(
    \vec{\pi}_{T}^{\ell}(\vec{v}),
    \big(\vec{\pi}_{\vec{\mathcal{Q}},F}^{\ell}(\vec{v}_{\mid F}{\times}\normal_F)\big)_{F\in\f_T},\big(\pi_F^{\ell}(\eta_T\vec{v}_{\mid F}{\cdot}\normal_F)\big)_{F\in\f_T}
    \right).
\end{equation}
The proof of the next lemma is straightforward based on the definitions~\eqref{eq:stab} and~\eqref{def:Idch.loc}.
\begin{lemma}[Optimal polynomial consistency] \label{le:opc}
  Let $T\in\t_h$. For all $\vec{p}\in\bPoly^{\ell}(T)$, letting $\underline{\mathbbmsl{p}}_T\in\dweberT$ be such that $\underline{\mathbbmsl{p}}_T\defi\underline{\mathbbmsl{I}}^{\ell}_T(\vec{p})$, the following holds true:
  \begin{itemize}
    \item[$\bullet$] $s_{\Curl,T}(\underline{\mathbbmsl{p}}_T,\vTF)=0$ \,for all $\vTF\in\dweberT$;
    \item[$\bullet$] $s_{\Div,T}(\underline{\mathbbmsl{p}}_T,\vTF)=0$ \,for all $\vTF\in\dweberT$.
  \end{itemize}
\end{lemma}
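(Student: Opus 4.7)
The plan is to verify both identities by direct expansion of the definitions of the reduction operator $\underline{\mathbbmsl{I}}^{\ell}_T$ (see~\eqref{def:Idch.loc}) and the local Hermitian forms $s_{\Curl,T}$ and $s_{\Div,T}$ (see~\eqref{eq:stab}). In both cases the key observation is that, when $\vec{p}\in\bPoly^{\ell}(T)$, the cell-projection $\vec{\pi}^{\ell}_T$ acts as the identity on $\vec{p}$, so the face penalty terms computed from $\underline{\mathbbmsl{p}}_T$ collapse to zero.

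For the first identity, I would plug $\wTF=\underline{\mathbbmsl{p}}_T$ into the definition of $s_{\Curl,T}$. Since $\vec{\pi}^{\ell}_T(\vec{p})=\vec{p}$ and the rotated tangential face component of $\underline{\mathbbmsl{p}}_T$ is by definition $\vec{\pi}^{\ell}_{\vec{\mathcal{Q}},F}(\vec{p}_{\mid F}{\times}\normal_F)$, each face contribution of the form $\vec{\pi}^{\ell}_{\vec{\mathcal{Q}},F}(\vec{p}_{\mid F}{\times}\normal_F)-\vec{\pi}^{\ell}_{\vec{\mathcal{Q}},F}(\vec{p}_{\mid F}{\times}\normal_F)$ vanishes identically, and hence $s_{\Curl,T}(\underline{\mathbbmsl{p}}_T,\vTF)=0$ regardless of $\vTF\in\dweberT$.

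For the second identity, I would proceed analogously. Plugging $\wTF=\underline{\mathbbmsl{p}}_T$ into $s_{\Div,T}$, each face penalty reads $\eta_T\vec{p}_{\mid F}{\cdot}\normal_F-\pi^{\ell}_F(\eta_T\vec{p}_{\mid F}{\cdot}\normal_F)$. The only non-trivial point here is to notice that $\eta_T$ is a real constant on $T$ (by~\eqref{eq:eta.d}), and that for $\vec{p}\in\bPoly^{\ell}(T)$ the trace $\vec{p}_{\mid F}{\cdot}\normal_F$ lies in $\Poly^{\ell}(F)$, so that $\eta_T\vec{p}_{\mid F}{\cdot}\normal_F\in\Poly^{\ell}(F)$. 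The projector $\pi^{\ell}_F$ then acts as the identity, each face term vanishes, and $s_{\Div,T}(\underline{\mathbbmsl{p}}_T,\vTF)=0$.

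There is no real obstacle: everything is by definition, and the critical ingredient that makes the second identity work is precisely the choice of the normal face space $\Poly^{\ell}(F)$, which matches the full trace space $\pRT{\ell}_{\mid F}{\cdot}\normal_F$ per~\eqref{eq:ntr}, together with the piecewise-constancy of $\eta$ on $\t_h$; had either assumption been relaxed, the normal residual would no longer lie in $\Poly^{\ell}(F)$ and optimal consistency would fail. This is exactly the heuristic articulated in Remark~\ref{rem:fac.spa}.
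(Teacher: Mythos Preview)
Your proof is correct and follows exactly the approach indicated by the paper, which states that the result is ``straightforward based on the definitions~\eqref{eq:stab} and~\eqref{def:Idch.loc}'' without giving further details. The paper also explicitly records, just before the lemma, the key fact you use for the second identity, namely that $\pi^{\ell}_F(\eta_T\vec{v}_{T\mid F}{\cdot}\normal_F)=\eta_T\vec{v}_{T\mid F}{\cdot}\normal_F$ for $\vT\in\bPoly^{\ell}(T)$ since $\eta_T$ is constant and the normal face space is $\Poly^{\ell}(F)$.
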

\noindent
The important Lemma~\ref{le:opc} above guarantees that the stabilization Hermitian forms~\eqref{eq:stab} shall provide superconvergence when used within hybrid numerical schemes (cf.~\cite{CQSSo:17,CDPLe:22}). In the literature, this kind of stabilization is often referred to as of Lehrenfeld--Sch\"oberl type~\cite{LeSch:16} (see also~\cite{CEPig:21}).

\section{Hybrid Weber inequalities} \label{se:weber}

Let $\ell\in\Natu$ be a given polynomial degree.

\subsection{First Weber inequality}

Recall the definitions~\eqref{eq:dweber} of the discrete space $\dfweber$,~\eqref{eq:senocu}--\eqref{eq:senodi} of the hybrid semi-norms $|{\cdot}|_{\Curl,h}$ and $|{\cdot}|_{\Div,h}$, and~\eqref{eq:trop} of the discrete $\eta$-weighted normal trace operator $\gamma_{n,h}$.
\begin{theorem}[First hybrid Weber inequality] \label{thm:fweber}
  There exists $c_{W,1}>0$ independent of $h$ and $\eta$ such that, for all $\vh\in\dfweber$, one has
  \begin{equation} \label{thm:max.ineq.eq1}
    \|\eta^{\frac12}\vTh\|_0 \leq c_{W,1}\left(\eta_\flat^{-\frac12}|\vh|_{\Div,h}+\eta_\sharp^{\frac12}|\vh|_{\Curl,h}+\eta_\flat^{-\frac12}\kappa_\eta^{\frac12}\Bigg(\sum_{j=1}^{\beta_2}\big|\big(\gamma_{n,h}(\vh),1\big)_{\Gamma_j}\big|^2\Bigg)^{\nicefrac12}\right).
  \end{equation}
\end{theorem}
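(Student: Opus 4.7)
The plan is to emulate at the discrete level the continuous proof of~\eqref{eq:web1}, viewing $\vTh\in\bPoly^\ell(\t_h)\subset\bLL$. Since $\vTh\in\bLL$, the first Helmholtz--Hodge decomposition~\eqref{eq:helm1} supplies $\varphi\in H^1_0(\Omega)$, $\vec{\psi}\in\vec{H}^1(\Omega)\cap\vec{L}^2_{\vec{0}}(\Omega)$ satisfying the regularity estimate~\eqref{hh.reg1}, and $\vec{w}=\Grad\omega\in\fharmonic$ with $\omega=\sum_{j=1}^{\beta_2}\alpha_j\omega_j$, so that
\begin{equation*}
  \|\eta^{\frac12}\vTh\|_0^2=\sum_{T\in\t_h}(\eta_T\vT,\Grad\varphi)_T+\eta_\sharp\sum_{T\in\t_h}(\vT,\Curl\vec{\psi})_T+\sum_{T\in\t_h}(\eta_T\vT,\Grad\omega)_T.
\end{equation*}

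Each sum would be handled by cell-wise integration by parts. For the first sum, the skeleton contribution $\sum_T\sum_{F\in\f_T}\langle(\eta_T\vT){\cdot}\normal_{T,F},\varphi\rangle_F$ is rewritten, using the orientation cancellation $\varepsilon_{T^+,F}+\varepsilon_{T^-,F}=0$ on interfaces and $\varphi_{\mid\Gamma}=0$ on boundary faces, as $\sum_T\sum_{F\in\f_T}\varepsilon_{T,F}\langle(\eta_T\vT){\cdot}\normal_F-\vFn,\varphi\rangle_F$. The same orientation trick, combined with the condition $\vFt\equiv\vec{0}$ encoded in $\vh\in\dfweber$, yields for the curl sum the analogous skeleton expression $\sum_T\sum_{F\in\f_T}\varepsilon_{T,F}\langle\vT{\times}\normal_F-\vFt,\vec{\psi}\rangle_F$. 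For the third sum, the boundary values $\omega_{\mid\Gamma_j}=\alpha_j$ (and $\omega_{\mid\Gamma_0}=0$) produce an additional contribution $\sum_{j=1}^{\beta_2}\overline{\alpha_j}\big(\gamma_{n,h}(\vh),1\big)_{\Gamma_j}$ which matches the third term on the right-hand side of~\eqref{thm:max.ineq.eq1}.

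The cell $L^2$-pairings arising after integration by parts (typically $-(\Div_h(\eta\vTh),\varphi)_\Omega$ or $(\Curl_h\vTh,\vec{\psi})_\Omega$) are bounded by Cauchy--Schwarz and the trivial inequalities $\|\Div_h(\eta\vTh)\|_0\leq|\vh|_{\Div,h}$ and $\|\Curl_h\vTh\|_0\leq|\vh|_{\Curl,h}$ built into~\eqref{eq:senocu}--\eqref{eq:senodi}. The face pairings are bounded by an $h_F^{-1/2}\cdot h_F^{1/2}$ splitting and Cauchy--Schwarz: the jumps in $\vFn$ are controlled by $|\vh|_{\Div,h}$ via~\eqref{eq:senodi}, and the (unprojected) jumps in $\vFt$ are controlled by $|\vh|_{\Curl,h}$ via Lemma~\ref{le:fuco}, while the dual test-function factor $\big(\sum_T\sum_{F\in\f_T}h_F\|\chi\|_{0,F}^2\big)^{\nicefrac12}$ for $\chi\in\{\varphi,\vec{\psi},\omega\}$ is bounded by $\|\chi\|_{H^1(\Omega)}$ through a continuous trace inequality combined with mesh regularity. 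Poincar\'e's inequality, the estimate~\eqref{hh.reg1}, and the Poincar\'e inequality on $\omega$ (legitimate because $\omega_{\mid\Gamma_0}=0$) then reduce these $H^1$-norms to $\|\Grad\varphi\|_0$, $\|\Curl\vec{\psi}\|_0$ and $\|\Grad\omega\|_0$, respectively. The endgame is verbatim the continuous one: estimate $|\vec{\alpha}_{\vec{w}}|$ via~\eqref{eq:capac}--\eqref{eq:alpha1}, which produces the $\kappa_\eta^{\frac12}\eta_\flat^{-\frac12}$ prefactor on the harmonic-flux contribution; invoke the $\vec{L}^2_\eta(\Omega)$-orthogonality of~\eqref{eq:helm1} to absorb $\|\Grad\varphi\|_0$, $\|\Curl\vec{\psi}\|_0$ and $\|\Grad\omega\|_0$ into $\|\eta^{\frac12}\vTh\|_0$; and divide.

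The main delicate point I anticipate is the curl face pairing: had $|\vh|_{\Curl,h}$ only controlled the projected tangential gap $\vec{\pi}^{\ell}_{\vec{\mathcal{Q}},F}(\vT{\times}\normal_F)-\vFt$ instead of the full unprojected gap $\vT{\times}\normal_F-\vFt$, it would be impossible to test directly against the generic $H^1$-field $\vec{\psi}$ from the Helmholtz--Hodge decomposition, since $\vec{\psi}{\times}\normal_F$ does not lie in $\qRF{\ell}$. It is precisely the assumption~\eqref{eq:qRF} on the face space that enables Lemma~\ref{le:fuco} to recover the unprojected control, and this is what makes the whole argument close.
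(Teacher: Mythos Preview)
Your proposal is correct and follows essentially the same approach as the paper's own proof: decompose $\vTh$ via the first Helmholtz--Hodge decomposition~\eqref{eq:helm1}, integrate by parts cell by cell, insert the face unknowns $\vFn$ and $\vFt$ using orientation cancellation and the boundary conditions, bound the resulting jump terms by $|\vh|_{\Div,h}$ and (via Lemma~\ref{le:fuco}) $|\vh|_{\Curl,h}$, control the test-function factors by trace/Poincar\'e inequalities and~\eqref{hh.reg1}, handle the harmonic part through~\eqref{eq:alpha1}, and conclude by $\vec{L}^2_\eta(\Omega)$-orthogonality. Your identification of Lemma~\ref{le:fuco} as the crux for the tangential face pairing is exactly the point the paper stresses.
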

\begin{proof}
  Since $\vTh\in\bPoly^{\ell}(\t_h)\subset\bLL$, by the first Helmholtz--Hodge decomposition~\eqref{eq:helm1}, there exist $\varphi\in H^1_0(\Omega)$, $\vec{\psi}\in\vec{H}^1(\Omega)\cap\vec{L}^2_{\vec{0}}(\Omega)$, and $\vec{w}\in\fharmonic$ (writing $\vec{w}=\Grad\omega$, with $\omega\in H^1(\Omega)$ such that $\omega\defi\sum_{j=1}^{\beta_2}\alpha_j\omega_j$), such that
  \begin{equation} \label{eq:hodge}
    \vTh = \Grad \varphi + \frac{\eta_\sharp}{\eta}\Curl \vec{\psi} + \vec{w},
  \end{equation}
  with $\|\vec{\psi}\|_1\leq C_{\Omega,1}\|\Curl\vec{\psi}\|_0$ by~\eqref{hh.reg1}.
  From the decomposition~\eqref{eq:hodge}, we directly infer that
  \begin{equation}\label{eq:max.proof:norm.decomp1} 
    \|\eta^{\frac12}\vTh\|_0^2 =
    (\eta\vTh,\Grad \varphi)_\Omega
    +\eta_\sharp\big(\vTh,\Curl\vec{\psi}\big)_\Omega
    +\big(\eta\vTh, \Grad \omega \big)_\Omega
    \ifed \mathcal{I}_1 + \eta_\sharp\mathcal{I}_2 +\mathcal I_3.
  \end{equation}
  Let us now estimate the three addends in the right-hand side of~\eqref{eq:max.proof:norm.decomp1}.

  For $\mathcal{I}_1$, first remark that
  $$\sum_{T\in\t_h}\sum_{F\in\f_T}\varepsilon_{T,F}\big(\vFn,\varphi_{\mid F}\big)_F=0,$$
  as a consequence of the fact that $\varphi\in H^1_0(\Omega)$ (so that $\varphi$ is single-valued at interfaces and vanishes on boundary faces) and that, for any $F\subset\partial T^+\cap\partial T^-\in\f_h^\circ$, $\varepsilon_{T^+,F}+\varepsilon_{T^-,F}=0$.
  Hence, performing cell-by-cell integration by parts, we get
  \begin{align*}
    \mathcal{I}_{1}
    &=\sum_{T \in \t_h}\big(\eta_T\vT, \Grad \varphi)_T
    =\sum_{T\in\t_h} \bigg(
    -\big(\Div (\eta_T\vT), \varphi \big)_T
    +\sum_{F\in\f_T} \big( \eta_T\vec{v}_{T\mid F}{\cdot}\normal_{T,F}, \varphi_{\mid F} \big)_F
    \bigg)\\
    &=\sum_{T\in\t_h} -\big(\Div (\eta_T\vT), \varphi \big)_T
    +\sum_{T\in\t_h} \sum_{F\in\f_T} \varepsilon_{T,F}\big( \eta_T\vec{v}_{T\mid F}{\cdot}\normal_F - \vFn, \varphi_{\mid F} \big)_F.
  \end{align*}
  Repeated applications of the Cauchy--Schwarz inequality then yield
  $$\mathcal{I}_{1}\leq|\vh|_{\Div,h}\left(\|\varphi\|^2_0+\sum_{T\in\t_h} \sum_{F\in\f_T}h_F\|\varphi_{\mid F}\|^2_{0,F}\right)^{\nicefrac12}.$$
  Applying a continuous trace inequality (cf.~e.g.~\cite[Lem.~1.31]{DPDro:20}) combined with mesh regularity, using the fact that $h_T\leq{\rm diam}(\Omega)$ for all $T\in\t_h$, and concluding with the Poincar\'e inequality applied to $\varphi\in H^1_0(\Omega)$, we finally infer that
  \begin{equation} \label{eq:I1}
    \mathcal{I}_{1}\lesssim|\vh|_{\Div,h}\|\Grad\varphi\|_0.
  \end{equation}

  Let us now turn to the estimation of $\mathcal{I}_2$. In what follows, following Remark~\ref{re:tvf}, all tangential vector fields are pointwise identified to their counterparts in $\Comp^2$. First, notice that
  $$\sum_{T\in\t_h}\sum_{F\in\f_T}\varepsilon_{T,F}\big(\vFt,\normal_{F}{\times}(\vec{\psi}_{\mid F}{\times}\normal_{F}) \big)_F=0,$$
  as a consequence of the fact that $\vec{\psi}\in \vec{H}^1(\Omega)$ (so that all components of $\vec{\psi}$ are square-integrable and single-valued at interfaces), that $\vh\in\dfweber$ (so that $\vFt\equiv\vec{0}$ for all $F\in\f_h^\partial$), and that, for any $F\subset\partial T^+\cap\partial T^-\in\f_h^\circ$, $\varepsilon_{T^+,F}+\varepsilon_{T^-,F}=0$.
  Hence, performing cell-by-cell integration by parts yields
\begin{align*}
\mathcal{I}_2 &=
\sum_{T\in\t_h}\big(\vT,\Curl\vec{\psi}\big)_T
=\sum_{T\in\t_h} \bigg(\big(\Curl \vT,\vec{\psi}\big)_T
+\sum_{F\in\f_T}\big(\vec{v}_{T\mid F}{\times}\normal_{T,F},\normal_{F}{\times}(\vec{\psi}_{\mid F}{\times}\normal_{F}) \big)_F \bigg)
\\
&=\sum_{T\in\t_h} \big(\Curl \vT,\vec{\psi}\big)_T
+ \sum_{T\in\t_h} \sum_{F\in\f_T}\varepsilon_{T,F}\big(\vec{v}_{T\mid F}{\times}\normal_F-\vFt,\normal_{F}{\times}(\vec{\psi}_{\mid F}{\times}\normal_{F}) \big)_F.
\end{align*}
By repeated applications of the Cauchy--Schwarz inequality, we then get
\begin{multline} \label{eq:I2b}
\mathcal{I}_2
\leq \Bigg(
\|\Curl_h\!\vTh\|_0^2+\sum_{T\in\t_h}\sum_{F\in\f_T} h_F^{-1}\|\vec{v}_{T\mid F}{\times}\normal_F-\vFt\|_{0,F}^2
\Bigg)^{\nicefrac{1}{2}}
\\\times\Bigg(
\|\vec{\psi}\|_0^2+\sum_{T\in\t_h}\sum_{F\in\f_T} h_F\|\normal_{F}{\times}(\vec{\psi}_{\mid F}{\times}\normal_{F})\|_{0,F}^2
\Bigg)^{\nicefrac{1}{2}}.
\end{multline}
Let us focus on the first factor in the right-hand side of~\eqref{eq:I2b}. By the estimate~\eqref{eq:withoutproj}, and the definition~\eqref{eq:senocu} of $|{\cdot}|_{\Curl,h}$, we infer that
$$\Bigg(
\|\Curl_h\!\vTh\|_0^2+\sum_{T\in\t_h}\sum_{F\in\f_T} h_F^{-1}\|\vec{v}_{T\mid F}{\times}\normal_F-\vFt\|_{0,F}^2
\Bigg)^{\nicefrac{1}{2}}\lesssim|\vh|_{\Curl,h}.$$
For the second factor in the right-hand side of~\eqref{eq:I2b}, applying a continuous trace inequality combined with mesh regularity, using the fact that $h_T\leq {\rm diam}(\Omega)$ for all $T\in\t_h$, and concluding with the estimate $\|\vec{\psi}\|_1\leq C_{\Omega,1}\|\Curl\vec{\psi}\|_0$, there holds
\begin{equation*}
\left(
\|\vec{\psi}\|_0^2
+\sum_{T\in\t_h}\sum_{F\in\f_T} h_F\|\normal_{F}{\times}(\vec{\psi}_{\mid F}{\times}\normal_{F})\|_{0,F}^2
		\right)^{\nicefrac{1}{2}}
		\lesssim\|\Curl\vec{\psi}\|_0.
\end{equation*}
Collecting the different results, we finally infer that
\begin{equation} \label{eq:I2}
  \mathcal{I}_2\lesssim |\vh|_{\Curl,h}\|\Curl\vec{\psi}\|_0.
\end{equation}

We are now left with estimating $\mathcal{I}_3$. First, remark that
$$\sum_{T\in\t_h}\sum_{F\in\f_T}\varepsilon_{T,F}\big(\vFn,\omega_{\mid F}\big)_F=\sum_{j=1}^{\beta_2}\big(\gamma_{n,h}(\vh) , 1 \big)_{\Gamma_j}\overline{\alpha_j},$$
as a consequence of the fact that $\omega\in H^1(\Omega)$ (so that $\omega$ is single-valued at interfaces), that $\omega=\sum_{j=1}^{\beta_2}\alpha_j\omega_j$ with $\omega_{j\mid\Gamma_{j'}}=\delta_{jj'}$ for all $j'\in\{0,\ldots,\beta_2\}$ (see~\eqref{eq:harmonic1}), and that, for any $F\subset\partial T^+\cap\partial T^-\in\f_h^{\circ}$, $\varepsilon_{T^+,F}+\varepsilon_{T^-,F}=0$, whereas for any $F\subset\partial T\cap\Gamma\in\f_h^{\partial}$, $\varepsilon_{T,F}=1$.
Hence, performing cell-by-cell integration by parts, we get
\begin{align*}
\mathcal{I}_{3}
&=\sum_{T \in \t_h}\big(\eta_T\vT, \Grad \omega\big)_T
=\sum_{T\in\t_h} \left(
-\big(\Div (\eta_T\vT), \omega \big)_T
+\sum_{F\in\f_T} \big( \eta_T\vec{v}_{T\mid F}{\cdot}\normal_{T,F}, \omega_{\mid F} \big)_F
\right)\\
&=\sum_{T\in\t_h} -\big(\Div (\eta_T\vT), \omega \big)_T
+\sum_{T\in\t_h} \sum_{F\in\f_T } \varepsilon_{T,F}\big( \eta_T\vec{v}_{T\mid F}{\cdot}\normal_{F} - \vFn, \omega_{\mid F} \big)_F \\
&+\sum_{j=1}^{\beta_2}\big(\gamma_{n,h}(\vh) , 1 \big)_{\Gamma_j}\overline{\alpha_j}.
\end{align*}
Repeated applications of the Cauchy--Schwarz inequality then yield
$$\mathcal{I}_{3}\leq|\vh|_{\Div,h}\left(\|\omega\|^2_0+\sum_{T\in\t_h} \sum_{F\in\f_T}h_F\|\omega_{\mid F}\|^2_{0,F}\right)^{\nicefrac12}+\Bigg(\sum_{j=1}^{\beta_2}\big|\big(\gamma_{n,h}(\vh) , 1 \big)_{\Gamma_j}\big|^2\Bigg)^{\nicefrac12}|\vec{\alpha}_{\vec{w}}|,$$
where $|\vec{\alpha}_{\vec{w}}|$ denotes the Euclidean norm of $\vec{\alpha}_{\vec{w}}\defi(\alpha_j\in\Comp)_{j\in\{1,\dots,\beta_2\}}$.
Proceeding as in the continuous setting to estimate $|\vec{\alpha}_{\vec{w}}|$ (cf.~\eqref{eq:alpha1}), and as for the term $\mathcal{I}_1$ to estimate the $\omega$-factor (recall that $\omega_{\mid\Gamma_0}=0$), we finally infer that
\begin{equation} \label{eq:I3}
  \mathcal{I}_3\lesssim |\vh|_{\Div,h}\|\Grad\omega\|_0+\eta_\flat^{-\frac12}\kappa_\eta^{\frac12}\Bigg(\sum_{j=1}^{\beta_2}\big|\big(\gamma_{n,h}(\vh) , 1 \big)_{\Gamma_j}\big|^2\Bigg)^{\nicefrac12}\|\eta^{\frac12}\Grad\omega\|_0.
\end{equation}

By the $\vec{L}^2_\eta(\Omega)$-orthogonality of the decomposition~\eqref{eq:hodge}, there holds $\|\Grad\varphi\|_0\leq\eta_\flat^{-\frac12}\|\eta^{\frac12}\vTh\|_0$, $\|\Curl\vec{\psi}\|_0\leq\eta_\sharp^{-\frac12}\|\eta^{\frac12}\vTh\|_0$, and $\|\Grad\omega\|_0\leq\eta_\flat^{-\frac12}\|\eta^{\frac12}\Grad\omega\|_0\leq\eta_\flat^{-\frac12}\|\eta^{\frac12}\vTh\|_0$.
Plugging the three estimates~\eqref{eq:I1},~\eqref{eq:I2}, and~\eqref{eq:I3} into~\eqref{eq:max.proof:norm.decomp1}, we eventually obtain~\eqref{thm:max.ineq.eq1}.
\end{proof}

\begin{remark}[Useful variant] \label{rem:var1d}
  Let us introduce the following hybrid counterpart of the space $\Hcurl$:
  \begin{equation} \label{def:DOFs.c}
    \dweberc \defi\left\{ \vhc \defi \Big((\vT)_{T\in\t_h},(\vFt)_{F\in\f_h}\Big)
     \st 
     \begin{alignedat}{2}
	\vT&\in \bPoly^{\ell}(T) &\quad& \forall T\in\t_h
  		\\
  		\vFt&\in \qRF{\ell} &\quad& \forall F\in\f_h
      \end{alignedat}
     \right\},
  \end{equation}
  along with its subspace $\dfweberc\defi\left\{\vhc\in\dweberc\mid\vFt\equiv\vec{0}\,\forall F\in\f_h^{\partial}\right\}$ (hybrid counterpart of $\Hzcurl$).
  Following Remark~\ref{rem:var1}, assume that $\vhc\in\dfweberc$ is such that
  \begin{equation} \label{eq:ortho1}
    (\eta\vTh,\vec{z})_{\Omega}=0\qquad\forall\vec{z}\in\Grad\big(H^1_0(\Omega)\big)\overset{\perp_\eta}{\oplus}\fharmonic.
  \end{equation}
  Then, following the proof of Theorem~\ref{thm:fweber}, and (seamlessly) restricting the definition~\eqref{eq:senocu} of $|{\cdot}|_{\Curl,h}$ to vectors in $\dweberc$, one can show that
  \begin{equation} \label{eq:usweb1}
    \|\eta^{\frac12}\vTh\|_0\lesssim\eta_\sharp^{\frac12}|\vhc|_{\Curl,h},
  \end{equation}
  which is a complex-valued, variable-$\eta$ extension to possibly non-trivial topologies of~\cite[Thm.~2.1]{CDPLe:22}. We refer the reader to~\cite{CDPLe:22} for an example, in the magnetostatics framework on trivial domains, of how the orthogonality condition~\eqref{eq:ortho1} may be (possibly approximately) imposed at the discrete level on general polyhedral mesh families.
\end{remark}

\subsection{Second Weber inequality}

Recall the definitions~\eqref{eq:dweber} of the discrete space $\dsweber$,~\eqref{eq:senocu}--\eqref{eq:senodi} of the hybrid semi-norms $|{\cdot}|_{\Curl,h}$ and $|{\cdot}|_{\Div,h}$, and~\eqref{eq:trop} of the discrete $\eta$-weighted normal trace operator $\gamma_{n,h}$.
\begin{theorem}[Second hybrid Weber inequality] \label{thm:sweber}
  There exists $c_{W,2}>0$ independent of $h$ and $\eta$ such that, for all $\vh\in \dsweber$, one has
  \begin{equation}\label{thm:max.ineq.eq2}
    \|\eta^{\frac12}\vTh\|_0 \leq c_{W,2}\left(\eta_\flat^{-\frac12}|\vh|_{\Div,h}+\eta_\sharp^{\frac12}|\vh|_{\Curl,h}+\eta_\flat^{-\frac12}\kappa_\eta^{\frac12}\Bigg(\sum_{i=1}^{\beta_1}\big|\big(\gamma_{n,h}(\vh) , 1 \big)_{\Sigma_i}\big|^2\Bigg)^{\nicefrac12}\right).
  \end{equation}
\end{theorem}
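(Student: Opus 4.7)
The plan is to mirror the proof of Theorem~\ref{thm:fweber}, substituting the first Helmholtz--Hodge decomposition~\eqref{eq:helm1} with the second one~\eqref{eq:helm2}. Writing $\vTh=\Grad\varphi+\frac{\eta_\sharp}{\eta}\Curl\vec{\psi}+\vec{w}$ with $\varphi\in H^1(\Omega)\cap L^2_0(\Omega)$, $\vec{\psi}\in\vec{H}^1(\Omega)\cap\Hzcurl$ (satisfying $\|\vec{\psi}\|_1\leq C_{\Omega,2}\|\Curl\vec{\psi}\|_0$ by~\eqref{hh.reg2}), and $\vec{w}=\check{\Grad\pi}$ with $\pi=\sum_{i=1}^{\beta_1}\alpha_i\pi_i\in H^1(\hat{\Omega})\cap L^2_0(\hat{\Omega})$, I would expand
$$\|\eta^{\frac12}\vTh\|_0^2=(\eta\vTh,\Grad\varphi)_\Omega+\eta_\sharp(\vTh,\Curl\vec{\psi})_\Omega+(\eta\vTh,\check{\Grad\pi})_\Omega\ifed\mathcal{I}_1+\eta_\sharp\mathcal{I}_2+\mathcal{I}_3,$$
and estimate each addend by cell-by-cell integration by parts.

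For $\mathcal{I}_1$, the boundary condition on $\vh\in\dsweber$ replaces the one on $\varphi$ used in the fully tangential case: since $\vFn\equiv 0$ on every $F\in\f_h^\partial$, the auxiliary sum $\sum_{T,F}\varepsilon_{T,F}(\vFn,\varphi_{\mid F})_F$ still vanishes (interface terms cancel by single-valuedness of $\varphi$ and $\varepsilon_{T^+,F}+\varepsilon_{T^-,F}=0$; boundary terms vanish by the discrete normal trace condition). This allows insertion of $\vFn$ to produce face-penalty increments, and Cauchy--Schwarz combined with a continuous trace inequality, mesh regularity, and the Poincar\'e--Steklov inequality applied to $\varphi\in H^1(\Omega)\cap L^2_0(\Omega)$ yield $\mathcal{I}_1\lesssim|\vh|_{\Div,h}\|\Grad\varphi\|_0$. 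For $\mathcal{I}_2$, the role previously played by $\vh\in\dfweber$ is now taken up by $\vec{\psi}\in\Hzcurl$: the rotated tangential trace of $\vec{\psi}$ vanishes on $\Gamma$, so $\sum_{T,F}\varepsilon_{T,F}(\vFt,\normal_F\times(\vec{\psi}_{\mid F}\times\normal_F))_F=0$ persists, and the argument of Theorem~\ref{thm:fweber} applies verbatim via~\eqref{eq:withoutproj} and~\eqref{hh.reg2}, giving $\mathcal{I}_2\lesssim|\vh|_{\Curl,h}\|\Curl\vec{\psi}\|_0$.

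The main new ingredient, and the expected obstacle, is the estimation of the harmonic contribution $\mathcal{I}_3=\sum_T(\eta_T\vT,\Grad\pi)_T$, because $\pi$ is defined only on $\hat{\Omega}$ and is multi-valued across the cutting surfaces. The bookkeeping relies on the assumption (Section~\ref{sse:pol.mesh}) that each $\Sigma_i$ is an exact union of mesh faces, so each cell lies entirely on one side of every cutting surface and the trace of $\pi$ from $T$ on $F\in\f_T$ is unambiguous. After cell-by-cell integration by parts, one partitions $\f_h=\f_h^\partial\cup\hat{\f}_h^\circ\cup\bigcup_{i=1}^{\beta_1}\f_{h,\Sigma_i}^\circ$ and treats the three categories separately. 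On $F\in\f_h^\partial$ the face contribution is already a $\vFn$-penalty term since $\vFn\equiv 0$; on $F\in\hat{\f}_h^\circ$ the single-valuedness of $\pi$ together with $\varepsilon_{T^+,F}+\varepsilon_{T^-,F}=0$ lets one insert $\vFn$ freely to rewrite the sum as a face-penalty contribution. On $F\in\f_{h,\Sigma_i}^\circ$, the convention $\normal_F=\normal_{\Sigma_i\mid F}$ aligns the $+/-$ sides of $\Sigma_i$ and of $F$; adding and subtracting $\vFn$ produces two face-penalty terms plus the extra contribution $(\vFn,\pi^+-\pi^-)_F=\overline{\alpha_i}(\vFn,1)_F$ by~\eqref{eq:harmonic2}. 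Summing over $i$ and $F\in\f_{h,\Sigma_i}^\circ$ reassembles exactly $\sum_{i=1}^{\beta_1}\overline{\alpha_i}(\gamma_{n,h}(\vh),1)_{\Sigma_i}$, with $(\gamma_{n,h}(\vh),1)_{\Sigma_i}$ interpreted as $\sum_{F\in\f_{h,\Sigma_i}^\circ}(\vFn,1)_F$.

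To close, I would bound $|\vec{\alpha}_{\vec{w}}|$ via~\eqref{eq:alpha2} and apply the Poincar\'e--Steklov inequality to $\pi\in H^1(\hat{\Omega})\cap L^2_0(\hat{\Omega})$ (valid since $\hat{\Omega}$ is pseudo-Lipschitz and connected), using $\|\Div(\eta\vTh)\|_{0,\hat{\Omega}}=\|\Div_h(\eta\vTh)\|_0$ and $\|\Grad\pi\|_{0,\hat{\Omega}}=\|\check{\Grad\pi}\|_0$, to obtain $\mathcal{I}_3\lesssim|\vh|_{\Div,h}\|\check{\Grad\pi}\|_0+\eta_\flat^{-\frac12}\kappa_\eta^{\frac12}\big(\sum_{i=1}^{\beta_1}|(\gamma_{n,h}(\vh),1)_{\Sigma_i}|^2\big)^{\frac12}\|\eta^{\frac12}\check{\Grad\pi}\|_0$. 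Collecting the three estimates and invoking the $\vec{L}^2_\eta(\Omega)$-orthogonality of~\eqref{eq:helm2} to bound $\|\Grad\varphi\|_0$, $\|\Curl\vec{\psi}\|_0$ and $\|\check{\Grad\pi}\|_0$ by $\eta$-weighted multiples of $\|\eta^{\frac12}\vTh\|_0$, together with $\|\cdot\|_0\leq\eta_\flat^{-\frac12}\|\eta^{\frac12}\cdot\|_0$ and $\eta_\sharp^{\frac12}\|\cdot\|_0\leq\eta_\sharp\|\eta^{-\frac12}\cdot\|_0$, yields~\eqref{thm:max.ineq.eq2}.
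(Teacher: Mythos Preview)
Your proposal is correct and follows essentially the same approach as the paper's proof: decompose $\vTh$ via~\eqref{eq:helm2}, estimate $\mathcal{I}_1$ and $\mathcal{I}_2$ by adapting the arguments of Theorem~\ref{thm:fweber} (with the boundary conditions on $\vh$ and on $\vec{\psi}$ suitably swapped, exactly as you identify), and handle $\mathcal{I}_3$ by cell-by-cell integration by parts together with the face partition $\f_h^\partial\cup\hat{\f}_h^\circ\cup\bigcup_i\f_{h,\Sigma_i}^\circ$ to extract the flux terms $\sum_i\overline{\alpha_i}(\gamma_{n,h}(\vh),1)_{\Sigma_i}$. The paper proceeds identically, invoking the same Poincar\'e--Steklov inequalities on $\Omega$ and $\hat{\Omega}$, the bound~\eqref{eq:alpha2} on $|\vec{\alpha}_{\vec{w}}|$, and the $\vec{L}^2_\eta(\Omega)$-orthogonality of~\eqref{eq:helm2} to conclude.
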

\begin{proof}
  Since $\vTh\in\bPoly^{\ell}(\t_h)\subset\bLL$, by the second Helmholtz--Hodge decomposition~\eqref{eq:helm2}, there exist $\varphi\in H^1(\Omega)\cap L^2_0(\Omega)$, $\vec{\psi}\in\vec{H}^1(\Omega)\cap\Hzcurl$, and $\vec{w}\in\sharmonic$ (writing $\vec{w}=\check{\Grad\pi}$, with $\pi\in H^1(\hat{\Omega})\cap L^2_0(\hat{\Omega})$ such that $\pi\defi\sum_{i=1}^{\beta_1}\alpha_i\pi_i$), such that
  \begin{equation} \label{eq:hodgeb}
    \vTh = \Grad \varphi + \frac{\eta_\sharp}{\eta}\Curl \vec{\psi} + \vec{w},
  \end{equation}
  with $\|\vec{\psi}\|_1\leq C_{\Omega,2}\|\Curl\vec{\psi}\|_0$ by~\eqref{hh.reg2}.
  From the decomposition~\eqref{eq:hodgeb}, we directly infer that
  \begin{equation}\label{eq:max.proof:norm.decomp2} 
    \|\eta^{\frac12}\vTh\|_0^2 =
    (\eta\vTh,\Grad \varphi)_\Omega
    +\eta_\sharp\big(\vTh,\Curl\vec{\psi}\big)_\Omega
    +\big(\eta\vTh, \Grad \pi \big)_{\hat{\Omega}}
    \ifed \mathcal{I}_1 + \eta_\sharp\mathcal{I}_2 +\mathcal I_3.
  \end{equation}
  Let us now estimate the three addends in the right-hand side of~\eqref{eq:max.proof:norm.decomp2}.

  For $\mathcal{I}_1$ and $\mathcal{I}_2$, it is straightforward to prove, adapting the arguments advocated in the proof of Theorem~\ref{thm:fweber}, that
  \begin{equation} \label{eq:I12}
    \mathcal{I}_1+\eta_\sharp\mathcal{I}_2\lesssim|\vh|_{\Div,h}\|\Grad\varphi\|_0+\eta_\sharp|\vh|_{\Curl,h}\|\Curl\vec{\psi}\|_0.
  \end{equation}

  Let us now focus on $\mathcal{I}_3$. First, remark that
  $$\sum_{T\in\t_h}\sum_{F\in\f_T}\varepsilon_{T,F}\big(\vFn,(\pi_{\mid T})_{\mid F}\big)_F=\sum_{i=1}^{\beta_1}\big(\gamma_{n,h}(\vh) , 1 \big)_{\Sigma_i}\overline{\alpha_i},$$
  as a consequence of the fact that $\pi\in H^1(\hat{\Omega})$ (so that $\pi$ is single-valued at interfaces $F\in\hat{\f}_h^\circ$), that $\pi=\sum_{i=1}^{\beta_1}\alpha_i\pi_i$ with $\llbracket\pi_i\rrbracket_{\Sigma_{i'}}=\delta_{ii'}$ for all $i'\in\{1,\ldots,\beta_1\}$ (see~\eqref{eq:harmonic2}), that $\vh\in\dsweber$ (so that $\vFn=0$ for all $F\in\f_h^\partial$), and that, for any $F\subset\partial T^+\cap\partial T^-\in\f_h^\circ$, $\varepsilon_{T^+,F}+\varepsilon_{T^-,F}=0$ with, for any $i\in\{1,\ldots,\beta_1\}$, and any $F\subset\partial T^+\cap\partial T^-\in\f_{h,\Sigma_i}^\circ$, $\varepsilon_{T^+,F}=1$.
  Hence, performing cell-by-cell integration by parts, we get
\begin{align*}
\mathcal{I}_{3}
&=\sum_{T \in \t_h}\big(\eta_T\vT, \Grad \pi)_T
=\sum_{T\in\t_h} \left(
-\big(\Div (\eta_T\vT), \pi \big)_T
+\sum_{F\in\f_T} \big( \eta_T\vec{v}_{T\mid F}{\cdot}\normal_{T,F}, (\pi_{\mid T})_{\mid F} \big)_F
\right)\\
&=\sum_{T\in\t_h}
-\big(\Div (\eta_T\vT), \pi \big)_T
+\sum_{T\in\t_h} \sum_{F\in\f_T} \varepsilon_{T,F}\big( \eta_T\vec{v}_{T\mid F}{\cdot}\normal_F - \vFn,(\pi_{\mid T})_{\mid F} \big)_F\\
&+\sum_{i=1}^{\beta_1}\big(\gamma_{n,h}(\vh) , 1 \big)_{\Sigma_i}\overline{\alpha_i}.
\end{align*}
Repeated applications of the Cauchy--Schwarz inequality then yield
$$\mathcal{I}_{3}\leq|\vh|_{\Div,h}\left(\|\pi\|^2_{0,\hat{\Omega}}+\sum_{T\in\t_h} \sum_{F\in\f_T}h_F\|(\pi_{\mid T})_{\mid F}\|^2_{0,F}\right)^{\nicefrac12}+\Bigg(\sum_{i=1}^{\beta_1}\big|\big(\gamma_{n,h}(\vh) , 1 \big)_{\Sigma_i}\big|^2\Bigg)^{\nicefrac12}|\vec{\alpha}_{\vec{w}}|,$$
where $|\vec{\alpha}_{\vec{w}}|$ denotes the Euclidean norm of $\vec{\alpha}_{\vec{w}}\defi(\alpha_i\in\Comp)_{i\in\{1,\dots,\beta_1\}}$.
Proceeding as in the continuous setting (cf.~\eqref{eq:alpha2}) to estimate $|\vec{\alpha}_{\vec{w}}|$, and applying to the $\pi$-factor a continuous trace inequality combined with mesh regularity (and the fact that $h_T\leq{\rm diam}(\Omega)$ for all $T\in\t_h$), along with a Poincar\'e--Steklov inequality (recall that $\pi\in H^1(\hat{\Omega})\cap L^2_0(\hat{\Omega})$ and that $\hat{\Omega}$ is assumed to be connected), we finally infer
\begin{equation} \label{eq:I3b}
  \mathcal{I}_3\lesssim |\vh|_{\Div,h}\|\check{\Grad\pi}\|_0+\eta_\flat^{-\frac12}\kappa_\eta^{\frac12}\Bigg(\sum_{i=1}^{\beta_1}\big|\big(\gamma_{n,h}(\vh) , 1 \big)_{\Sigma_i}\big|^2\Bigg)^{\nicefrac12}\|\eta^{\nicefrac12}\check{\Grad\pi}\|_0,
\end{equation}
where we also used that $\|\Grad\pi\|_{0,\hat{\Omega}}=\|\check{\Grad\pi}\|_0$.

By the $\vec{L}^2_\eta(\Omega)$-orthogonality of the decomposition~\eqref{eq:hodgeb}, there holds $\|\Grad\varphi\|_0\leq\eta_\flat^{-\frac12}\|\eta^{\frac12}\vTh\|_0$, $\|\Curl\vec{\psi}\|_0\leq\eta_\sharp^{-\frac12}\|\eta^{\frac12}\vTh\|_0$, and $\|\check{\Grad\pi}\|_0\leq\eta_\flat^{-\frac12}\|\eta^{\frac12}\check{\Grad\pi}\|_0\leq\eta_\flat^{-\frac12}\|\eta^{\frac12}\vTh\|_0$.
Plugging the two estimates~\eqref{eq:I12} and~\eqref{eq:I3b} into~\eqref{eq:max.proof:norm.decomp2}, we eventually obtain~\eqref{thm:max.ineq.eq2}.
\end{proof}

\begin{remark}[Useful variant] \label{rem:var2d}
  Recall the definition~\eqref{def:DOFs.c} of the hybrid counterpart $\dweberc$ of the space $\Hcurl$.
  Following Remark~\ref{rem:var2}, assume that $\vhc\in\dweberc$ is such that
  \begin{equation} \label{eq:ortho2}
    (\eta\vTh,\vec{z})_{\Omega}=0\qquad\forall\vec{z}\in\Grad\big(H^1(\Omega)\cap L^2_0(\Omega)\big)\overset{\perp_\eta}{\oplus}\sharmonic.
  \end{equation}
  Then, revisiting the proof of Theorem~\ref{thm:sweber}, one can show that
  \begin{equation} \label{eq:usweb2}
    \|\eta^{\frac12}\vTh\|_0\lesssim \eta_\sharp^{\frac12}|\vhc|_{\Curl,h}.
  \end{equation}
\end{remark}

\section{Discrete Maxwell compactness} \label{se:maxcom}

Let $\ell,m,p\in\Natu$ be three given polynomial degrees.

\subsection{Discrete differential operators}

Recall the definitions~\eqref{def:DOFs} of the discrete space $\dweber$,~\eqref{eq:stab} of the local Hermitian forms $s_{\Curl,T}$ and $s_{\Div,T}$, and~\eqref{eq:senocu}--\eqref{eq:senodi} of the hybrid semi-norms $|{\cdot}|_{\Curl,h}$ and $|{\cdot}|_{\Div,h}$.

We define the (global) rotational reconstruction operator $\Cmh : \dweber \to \bPoly^m(\t_h)$ through its local restrictions $\CmT: \dweberT \to \bPoly^m(T)$ to any mesh cell $T \in \t_h$: for all $\vTF\in\dweberT$, $\CmT(\vTF)\in\bPoly^m(T)$ is the unique solution to
\begin{equation}\label{def:CTk}
  \big(\CmT(\vTF), \vec{q}\big)_T = (\vT,\Curl \vec{q})_T
-\!\!\!\sum_{F\in\f_T}\!\varepsilon_{T,F}\big(\vFt,\normal_F{\times}(\vec{q}_{\mid F}{\times}\normal_F)\big)_F \qquad\forall\vec{q}\in\bPoly^m(T),
\end{equation}
where, as now standard, $\normal_F{\times}(\vec{q}_{\mid F}{\times}\normal_F)$ is identified to its two-dimensional (tangential) counterpart.
Performing an integration by parts of the term $(\vT,\Curl \vec{q})_T$, and using a discrete trace inequality along with Lemma~\ref{le:fuco}, it is an easy matter to prove that
\begin{equation} \label{eq:bound.c}
  \sum_{T\in\t_h}\left(\|\CmT(\vTF)\|_{0,T}^2+s_{\Curl,T}(\vTF,\vTF)\right)\lesssim|\vh|^2_{\Curl,h}\qquad\forall\vh\in\dweber.
\end{equation}
If $m\geq\ell-1$, and $\qRF{\ell}$ (already satisfying~\eqref{eq:qRF}) also satisfies $\bPoly^{\ell-1}(F)\subset\qRF{\ell}$ for all $F\in\f_h$, one can also prove that $|\vh|^2_{\Curl,h}\lesssim\sum_{T\in\t_h}\left(\|\CmT(\vTF)\|_{0,T}^2+s_{\Curl,T}(\vTF,\vTF)\right)$.

In the same vein, we define the (global) divergence reconstruction operator $\Dph : \dweber \to \Poly^p(\t_h)$ through its local restrictions $\DpT: \dweberT \to \Poly^p(T)$ to any mesh cell $T \in \t_h$: for all $\vTF\in\dweberT$, $\DpT(\vTF)\in\Poly^p(T)$ is the unique solution to 
\begin{equation}\label{def:DTk}
  \big(\DpT(\vTF), q\big)_T = -(\eta_T\vT,\Grad q)_T   + \!\!\sum_{F\in\f_T}\!\varepsilon_{T,F}\big(\vFn, q_{\mid F}\big)_F \qquad\forall q \in\Poly^p(T).
\end{equation}
Performing an integration by parts of the term $(\eta_T\vT,\Grad q)_T$, and using a discrete trace inequality, it is an easy matter to prove that
\begin{equation} \label{eq:bound.d}
  \sum_{T\in\t_h}\left(\|\DpT(\vTF)\|_{0,T}^2+s_{\Div,T}(\vTF,\vTF)\right)\lesssim|\vh|_{\Div,h}^2\qquad\forall\vh\in\dweber.
\end{equation}
If $p\geq\ell-1$, one can also prove that $|\vh|_{\Div,h}^2\lesssim\sum_{T\in\t_h}\left(\|\DpT(\vTF)\|_{0,T}^2+s_{\Div,T}(\vTF,\vTF)\right)$.

\subsection{Compactness in $\underline{\mathbbmsl{X}}_{h,\vec{\tau}}^{\ell}$}

Recall the definitions~\eqref{eq:dweber} of the discrete space $\dfweber$, and~\eqref{eq:senocu}--\eqref{eq:senodi} of the hybrid semi-norms $|{\cdot}|_{\Curl,h}$ and $|{\cdot}|_{\Div,h}$. Leveraging the first hybrid Weber inequality from Theorem~\ref{thm:fweber}, we infer that
\begin{equation} \label{eq:nocu}
  \|\vh\|_{\vec{X}_{\vec{\tau}},h}^2\defi \eta_\flat^{-1}|\vh|_{\Div,h}^2+\eta_\sharp|\vh|_{\Curl,h}^2+\eta_\flat^{-1}\sum_{j=1}^{\beta_2}\big|\big(\gamma_{n,h}(\vh),1\big)_{\Gamma_j}\big|^2
\end{equation}
defines a norm on $\dfweber$, which can be proven uniformly (in $h$) equivalent to the $\|{\cdot}\|_{\vec{X},h}$-norm~\eqref{eq:no}: there is $c_{\vec{\tau}}>0$, independent of $h$ and $\eta$, such that
$$(c_{\vec{\tau}}\kappa_\eta)^{-1}\|\vh\|_{\vec{X},h}^2\leq\|\vh\|_{\vec{X}_{\vec{\tau}},h}^2\leq c_{\vec{\tau}}\kappa_\eta\|\vh\|_{\vec{X},h}^2\qquad\forall\vh\in\dfweber.$$

\begin{theorem}[Maxwell compactness in $\dfweber$] \label{th:max1}
  Let $\big(\mathcal{M}_h\big)_{h\in\Hindex}$ be a regular mesh family as defined in Section~\ref{sse:pol.mesh}. Let $(\vh)_{h\in\Hindex}$ be a sequence of elements of $\dfweber$ for which there exists a real number $c_M>0$ (independent of $h$) such that $\eta_\sharp^{-\frac12}\|\vh\|_{\vec{X}_{\vec{\tau}},h}\leq c_M$ for all $h\in\Hindex$.
  Then, there exists an element $\vec{v}\in\cfweber$ such that, along a subsequence, as $h \to 0$,
  \begin{enumerate}[label=(\alph*),leftmargin=2\parindent]
    \item $\vTh \rightarrow \vec{v}$ strongly in $\bLL$;
    \item $\Cmh(\vh)\rightharpoonup\Curl\vec{v}$ weakly in $\bLL$;
    \item $\Dph(\vh)\rightharpoonup\Div(\eta\vec{v})$ weakly in $\LL$;
    \item $\gamma_{n,h}(\vh)\rightharpoonup(\eta\vec{v})_{\mid\Gamma}{\cdot}\normal$ weakly in $H^{-\frac12}(\Gamma)$.
  \end{enumerate}
\end{theorem}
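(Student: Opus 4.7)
The plan is to follow the three-step structure of the proof of Proposition~\ref{pr:max1}, adapting each step to the broken polynomial setting by invoking the first hybrid Weber inequality (Theorem~\ref{thm:fweber}), the boundedness estimates~\eqref{eq:bound.c}--\eqref{eq:bound.d} for the discrete reconstructions, and cell-wise integrations by parts to handle face residuals.

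\emph{Step 1 (weak convergences).} From $\eta_\sharp^{-\frac12}\|\vh\|_{\vec{X}_{\vec{\tau}},h}\leq c_M$, Theorem~\ref{thm:fweber} yields a uniform bound on $\|\eta^{\frac12}\vTh\|_0$, while~\eqref{eq:bound.c}--\eqref{eq:bound.d} give uniform bounds on $\|\Cmh(\vh)\|_0$ and $\|\Dph(\vh)\|_0$. Standard weak-compactness arguments then produce, along a common subsequence, weak limits $\vec{v}\in\bLL$, $\vec{\chi}\in\bLL$, $\delta\in\LL$. For the boundary trace, I would bound $\|\gamma_{n,h}(\vh)\|_{H^{-\frac12}(\Gamma)}$ by duality: given $\mu\in H^{\frac12}(\Gamma)$ and an $H^1$-lift $q$ of $\mu$, the cell-wise integration by parts of $(\Dph(\vh),q)_\Omega$ via~\eqref{def:DTk} expresses $\langle\gamma_{n,h}(\vh),\mu\rangle_\Gamma$ as the sum of a volume contribution controlled by $|\vh|_{\Div,h}$ and $h$-small face-jump residuals.

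\emph{Step 2 (identification of the limits).} To establish $\vec{\chi}=\Curl\vec{v}$, the idea is to test~\eqref{def:CTk} against $\vec{q}_h\defi\vec{\pi}^m_h\vec{q}$ for smooth $\vec{q}\in C_c^\infty(\Omega)^3$ and pass to the limit $h\to 0$: the volume term $(\vTh,\Curl\vec{q})_\Omega$ converges to $(\vec{v},\Curl\vec{q})_\Omega$ by weak convergence, whereas the face residuals are controlled by $|\vh|_{\Curl,h}$ times an $h$-small approximation factor obtained via a continuous trace inequality on $\vec{q}-\vec{q}_h$. An analogous argument based on~\eqref{def:DTk} identifies $\delta=\Div(\eta\vec{v})$. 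Extending this testing to $\vec{q}$ not vanishing on $\Gamma$, and combining with the constraint $\vec{\gamma}_{\vec{\tau},h}(\vh)\equiv\vec{0}$, enforces $\vec{v}_{\mid\Gamma}{\times}\normal\equiv\vec{0}$, so that $\vec{v}\in\cfweber$, together with the weak-$H^{-\frac12}(\Gamma)$ convergence of $\gamma_{n,h}(\vh)$ to $(\eta\vec{v})_{\mid\Gamma}{\cdot}\normal$.

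\emph{Step 3 (strong convergence of $\vTh$ in $\bLL$).} This is where the main obstacle lies. My plan is to apply the continuous first Helmholtz--Hodge decomposition~\eqref{eq:helm1} directly to the broken polynomial field $\vTh\in\bLL$, obtaining
\begin{equation*}
\vTh=\Grad\varphi_h+\frac{\eta_\sharp}{\eta}\Curl\vec{\psi}_h+\vec{w}_h,
\end{equation*}
with $\varphi_h\in H^1_0(\Omega)$, $\vec{\psi}_h\in\vec{H}^1(\Omega)\cap\vec{L}^2_{\vec{0}}(\Omega)$ satisfying~\eqref{hh.reg1}, and $\vec{w}_h\in\fharmonic$. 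By $\vec{L}^2_\eta(\Omega)$-orthogonality, each of the three summands inherits the uniform $\bLL$-bound of $\vTh$; combined with Poincar\'e and~\eqref{hh.reg1}, this yields uniform $H^1$-bounds on $\varphi_h$ and $\vec{\psi}_h$. Rellich's theorem then produces strong $\LL$- and $\bLL$-limits $\varphi,\vec{\psi}$ (up to a subsequence), and finite-dimensionality of $\fharmonic$ gives strong $\bLL$-convergence $\vec{w}_h\to\vec{w}$. Uniqueness of the weak limit identifies $\vec{v}=\Grad\varphi+\frac{\eta_\sharp}{\eta}\Curl\vec{\psi}+\vec{w}$. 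To upgrade to strong convergence of the gradient and curl parts, I would exploit orthogonality to write
\begin{equation*}
\|\eta^{\frac12}(\Grad\varphi_h-\Grad\varphi)\|_0^2=(\eta(\vTh-\vec{v}),\Grad(\varphi_h-\varphi))_\Omega,
\end{equation*}
then integrate by parts cell-wise on the $\vTh$-contribution. Inserting the vanishing sum $\sum_T\sum_{F\in\f_T}\varepsilon_{T,F}(\vFn,(\varphi_h-\varphi)_{\mid F})_F=0$ (which holds since $\varphi_h-\varphi\in H^1_0(\Omega)$) reveals a normal face-jump residual controlled by $|\vh|_{\Div,h}$; a continuous trace inequality together with mesh regularity then bounds the whole right-hand side by $|\vh|_{\Div,h}$ times $\|\varphi_h-\varphi\|_0+h\|\Grad(\varphi_h-\varphi)\|_0$, which vanishes as $h\to 0$ thanks to the strong $\LL$-convergence of $\varphi_h$ and the uniform $H^1$-bound on $\varphi_h-\varphi$. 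Applying the same argument to $\Curl\vec{\psi}_h$, but now using Lemma~\ref{le:fuco} to absorb the tangential face jumps into $|\vh|_{\Curl,h}$, yields the strong convergence of the curl part and completes the proof.
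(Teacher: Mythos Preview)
Your proposal is correct and follows essentially the same route as the paper's proof: the same uniform bounds via Theorem~\ref{thm:fweber} and~\eqref{eq:bound.c}--\eqref{eq:bound.d}, the same identification of the weak limits by testing~\eqref{def:CTk}--\eqref{def:DTk} against (projections of) smooth functions and tracking the $h$-small face residuals, the same Helmholtz--Hodge decomposition~\eqref{eq:helm1} applied directly to $\vTh\in\bLL$, and the same upgrade to strong convergence of $\Grad\varphi_h$ and $\Curl\vec{\psi}_h$ by cell-wise integration by parts with the inserted vanishing face sums. The only differences are organizational (you group the paper's Step~1 into your Steps~1--2 and its Steps~2--3 into your Step~3) and the extra sketch of a direct $H^{-\frac12}(\Gamma)$-bound on $\gamma_{n,h}(\vh)$, which the paper bypasses by obtaining point~(d) directly from the limiting identity~\eqref{eq:parts2} with $z\in C^\infty(\overline\Omega)$.
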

\begin{proof}
  As in the continuous case (cf.~Proposition~\ref{pr:max1}), the proof proceeds in three steps.
  
  \smallskip
  {\em Step 1 (Weak convergence):} By assumption, $\|\vh\|_{\vec{X}_{\vec{\tau}},h}\leq c_M\eta_\sharp^{\frac12}$ for all $h\in\Hindex$. By definition~\eqref{eq:nocu} of the $\|{\cdot}\|_{\vec{X}_{\vec{\tau}},h}$-norm, by the boundedness results~\eqref{eq:bound.c} and~\eqref{eq:bound.d}, and by the first hybrid Weber inequality~\eqref{thm:max.ineq.eq1}, this implies that (i) $(\vTh)_{h\in\Hindex}$ and $\big(\Cmh(\vh)\big)_{h\in\Hindex}$ are uniformly bounded in $\bLL$, and (ii) $\big(\Dph(\vh)\big)_{h\in\Hindex}$ is uniformly bounded in $\LL$. By weak compactness, we then infer the existence of $\vec{v}\in\bLL$, $\vec{c}\in\bLL$, and $d\in\LL$ such that, along a subsequence (not relabelled), (i) $\vTh\rightharpoonup\vec{v}$ and $\Cmh(\vh)\rightharpoonup\vec{c}$ weakly in $\bLL$, and (ii) $\Dph(\vh)\rightharpoonup d$ weakly in $\LL$.

  Let us first prove that $\vec{v}\in\Hzcurl$ and that $\vec{c}=\Curl\vec{v}$. Let $\vec{z}\in\vec{C}^\infty(\overline{\Omega})$. There holds
  \begin{equation} \label{eq:parts}
    \begin{split}
      \big(\Cmh (\vh), \vec{z} \big)_\Omega
      &=\sum_{T\in\t_h}\big(\CmT (\vTF),\vec{\pi}_T^m(\vec{z}_{\mid T})\big)_T\\
      &=\sum_{T\in\t_h} \bigg(\big( \Curl \vT, \vec{\pi}_T^m(\vec{z}_{\mid T}) \big)_T\\
      &\qquad+ \sum_{F\in\f_T}\varepsilon_{T,F}\big(\vec{v}_{T\mid F}{\times}\normal_F-\vFt,\normal_F{\times}(\vec{\pi}^m_T(\vec{z}_{\mid T})_{\mid F}{\times}\normal_F)\big)_F \bigg)
      \\
      &=\big(\Curl_h \vTh, \vec{z}\big)_{\Omega}
      +\sum_{T \in \t_h} \sum_{F\in\f_T}\varepsilon_{T,F}\big(\vec{v}_{T\mid F}{\times}\normal_F-\vFt,\normal_F{\times}(\vec{z}_{\mid F}{\times}\normal_F)\big)_F+\mathcal{I}_{\Curl,h}
      \\
      &=\big(\vTh, \Curl\vec{z}\big)_{\Omega}+\mathcal{I}_{\Curl,h},
    \end{split}
  \end{equation}
  with $\mathcal{I}_{\Curl,h}$ given by
  \begin{multline*}
    \mathcal{I}_{\Curl,h}\defi\big(\Curl_h \vTh,\vec{\pi}^m_h(\vec{z})-\vec{z}\big)_{\Omega}\\+\sum_{T \in \t_h} \sum_{F\in\f_T}\varepsilon_{T,F}\big(\vec{v}_{T\mid F}{\times}\normal_F-\vFt,\normal_F{\times}((\vec{\pi}^m_T(\vec{z}_{\mid T})-\vec{z})_{\mid F}{\times}\normal_F)\big)_F,
  \end{multline*}                        
  where we have (i) used that $\CmT(\vTF)\in\bPoly^m(T)$ for all $T\in\t_h$ in the first line, (ii) applied the definition~\eqref{def:CTk} of $\CmT(\vTF)$ along with an integration by parts of the term $\big(\vT,\Curl\vec{\pi}^m_T(\vec{z}_{\mid T})\big)_T$ in the second line, (iii) added/subtracted $\big(\Curl_h \vTh, \vec{z}\big)_{\Omega}$ and $\sum_{T \in \t_h} \sum_{F\in\f_T}\varepsilon_{T,F}\big(\vec{v}_{T\mid F}{\times}\normal_F-\vFt,\normal_F{\times}(\vec{z}_{\mid F}{\times}\normal_F)\big)_F$ in the third line, and (iv) performed cell-by-cell integration by parts of the term $\big(\Curl_h \vTh, \vec{z}\big)_{\Omega}$ and used that $\sum_{T \in \t_h} \sum_{F\in\f_T}\varepsilon_{T,F}\big(\vFt,\normal_F{\times}(\vec{z}_{\mid F}{\times}\normal_F)\big)_F=0$ (since $\vec{z}$ is single-valued at interfaces and $\vFt\equiv\vec{0}$ for all $F\in\f_h^{\partial}$ for $\vh\in\dfweber$) in the fourth line. For the term $\mathcal{I}_{\Curl,h}$ in the right-hand side of~\eqref{eq:parts}, standard approximation properties for $\vec{\pi}_h^m$ on mesh cells (cf.~e.g.~\cite[Thm.~1.45]{DPDro:20}) on the one side, and repeated applications of the triangle and Cauchy--Schwarz inequalities along with the estimate~\eqref{eq:withoutproj} and approximation properties for $\vec{\pi}_h^m$ on mesh faces on the other side, yield
  \begin{equation*}
    |\mathcal{I}_{\Curl,h}|\lesssim h^{m+1}|\vh|_{\Curl,h}|\vec{z}|_{m+1}.
  \end{equation*}
  Starting from~\eqref{eq:parts}, and using that $\vTh\rightharpoonup\vec{v}$ and $\Cmh(\vh)\rightharpoonup\vec{c}$ weakly in $\bLL$, combined with the fact that $\mathcal{I}_{\Curl,h}\to 0$ as $h\to 0$ (since $|\vh|_{\Curl,h}\leq c_M$ for all $h\in\Hindex$ and $m+1\geq 1$), we infer that
  $$(\vec{c},\vec{z})_{\Omega}=(\vec{v},\Curl\vec{z})_{\Omega}\qquad\forall\vec{z}\in\vec{C}^\infty(\overline{\Omega}).$$
  Since the latter relation is, in particular, valid for all $\vec{z}\in\vec{C}^\infty_{\vec{0}}(\Omega)$, we infer that $\vec{v}\in\Hcurl$ and that $\vec{c}=\Curl\vec{v}$.
  Then, integrating by parts the left-hand side of the relation, we get that $\langle\vec{v}_{\mid\Gamma}{\times}\normal,\normal{\times}(\vec{z}_{\mid\Gamma}{\times}\normal)\rangle_{\Gamma}=0$ for all $\vec{z}\in\vec{C}^\infty(\overline{\Omega})$, which implies that $\vec{v}\in\Hzcurl$.

  Let us now prove that $\vec{v}\in\Hdiv$ and that $d=\Div(\eta\vec{v})$, along with the weak convergence result $\gamma_{n,h}(\vh)\rightharpoonup(\eta\vec{v})_{\mid\Gamma}{\cdot}\normal$ in $H^{-\frac12}(\Gamma)$. Let $z\in C^\infty(\overline{\Omega})$. Adapting the arguments used in~\eqref{eq:parts} in the case of the rotational operator to the case of the divergence operator, we infer that
  \begin{equation} \label{eq:parts2}
    \big(\Dph(\vh), z \big)_\Omega=-\big(\eta\,\vTh, \Grad z \big)_{\Omega}+\big(\gamma_{n,h}(\vh),z_{\mid\Gamma}\big)_{\Gamma}+\mathcal{I}_{\Div,h},
  \end{equation}
  where $\mathcal{I}_{\Div,h}$ is given by
  $$\mathcal{I}_{\Div,h}\defi\big(\Div_h(\eta\vTh),\pi^p_h(z)-z\big)_{\Omega}+\sum_{T \in \t_h} \sum_{F\in\f_T}\varepsilon_{T,F} \big(\vFn - \eta_T\vec{v}_{T\mid F}{\cdot}\normal_F, (\pi_T^p(z_{\mid T}) - z)_{\mid F} \big)_F.$$
  Adapting the arguments from the rotational case, it is an easy matter to prove that
  \begin{equation*}
    |\mathcal{I}_{\Div,h}|\lesssim h^{p+1}|\vh|_{\Div,h}|z|_{p+1}.
  \end{equation*}
  Let us first choose $z\in C^\infty_0(\Omega)$. Remark that the boundary term in~\eqref{eq:parts2} vanishes. Starting from~\eqref{eq:parts2}, and using that $\vTh\rightharpoonup\vec{v}$ weakly in $\bLL$ and $\Dph(\vh)\rightharpoonup d$ weakly in $\LL$, combined with the fact that $\mathcal{I}_{\Div,h}\to 0$ as $h\to 0$ (since $|\vh|_{\Div,h}\leq c_M\eta_\sharp$ for all $h\in\Hindex$ and $p+1\geq 1$), we infer that
  $$(d,z)_{\Omega}=-(\eta\,\vec{v},\Grad z)_{\Omega}\qquad\forall z\in C^\infty_0(\Omega).$$
  The latter relation directly implies that $\vec{v}\in\Hdiv$ and that $d=\Div(\eta\vec{v})$.
  Now, passing to the limit $h\to 0$ in~\eqref{eq:parts2} for a generic $z\in C^\infty(\overline{\Omega})$, we infer that
  $$\lim_{h\to 0}\big(\gamma_{n,h}(\vh),z_{\mid\Gamma}\big)_{\Gamma}=(\Div(\eta\vec{v}),z)_{\Omega}+(\eta\,\vec{v},\Grad z)_{\Omega}=\langle(\eta\vec{v})_{\mid\Gamma}{\cdot}\normal,z_{\mid\Gamma}\rangle_{\Gamma},$$
  which implies that $\gamma_{n,h}(\vh)\rightharpoonup(\eta\vec{v})_{\mid\Gamma}{\cdot}\normal$ weakly in $H^{-\frac12}(\Gamma)$.

  \smallskip
  {\em Step 2 (Characterization of the limit):} This step is identical to Step 2 from the continuous case (cf.~the proof of Proposition~\ref{pr:max1}), up to the replacement of $m\in\Natu$ by $h\in\Hindex$ (and $m\to\infty$ by $h\to 0$) and of $\vec{v}_m\in\cfweber$ by $\vTh\in\bPoly^{\ell}(\t_h)$. For any $h\in\Hindex$, by~\eqref{eq:helm1}, there exist $\varphi_h\in H^1_0(\Omega)$, $\vec{\psi}_h\in\vec{H}^1(\Omega)\cap\vec{L}^2_{\vec{0}}(\Omega)$, and $\vec{w}_h\in\fharmonic$, such that
  \begin{equation} \label{eq:hodge2}
    \vec{v}_h=\Grad\varphi_h+\frac{\eta_\sharp}{\eta}\Curl\vec{\psi}_h+\vec{w}_h,
  \end{equation}
  with $\|\vec{\psi}_h\|_1\leq C_{\Omega,1}\|\Curl\vec{\psi}_h\|_0$ by~\eqref{hh.reg1}.
  Proceeding as in the continuous case, one can infer the existence of $\vec{w}\in\fharmonic$ such that, up to extraction (not relabelled), $\vec{w}_h\to\vec{w}$ strongly in $\bLL$, and of $\varphi\in H^1_0(\Omega)$ and $\vec{\psi}\in\vec{H}^1(\Omega)\cap\vec{L}^2_{\vec{0}}(\Omega)$ such that, up to extractions (not relabelled), $\Grad\varphi_h\rightharpoonup\Grad\varphi$ and $\Curl\vec{\psi}_h\rightharpoonup\Curl\vec{\psi}$ weakly in $\bLL$. By linearity, we have thus proven that, along a subsequence (not relabelled), $\vec{v}_h\rightharpoonup\Grad\varphi+\frac{\eta_\sharp}{\eta}\Curl\vec{\psi}+\vec{w}$ weakly in $\bLL$. By uniqueness of the weak limit, $\vec{v}=\Grad\varphi+\frac{\eta_\sharp}{\eta}\Curl\vec{\psi}+\vec{w}$.

  \smallskip
  {\em Step 3 (Strong convergence):} Here again, the arguments are inspired by what is done in Step 3 from the continuous case, but with the important difference that here $\vTh\in\bPoly^{\ell}(\t_h)\not\subset\cfweber$. By Rellich's compactness theorem, we actually have that, along the same subsequence (not relabelled) as in Step 2 above, $\varphi_h\to\varphi$ strongly in $\LL$ and $\vec{\psi}_h\to\vec{\psi}$ strongly in $\bLL$.
  We now want to prove the strong convergences of $(\Grad\varphi_h)_{h\in\Hindex}$ and $(\Curl\vec{\psi}_h)_{h\in\Hindex}$ in $\bLL$ (we remind the reader that we already proved strong convergence for $(\vec{w}_h)_{h\in\Hindex}$).

  Recalling the expression of $\vec{v}\in\cfweber$ derived in Step 2, since the decomposition~\eqref{eq:hodge2} is $\vec{L}^2_\eta(\Omega)$-orthogonal, there holds
  \begin{equation*}
    \begin{split}
      \|\eta^{\frac12}(\Grad\varphi_h-\Grad\varphi)\|_0^2&=\big(\eta(\vec{v}_h-\vec{v}),\Grad(\varphi_h-\varphi)\big)_{\Omega}\\
      &=\sum_{T\in\t_h}\bigg(-\big(\Div(\eta_T(\vT-\vec{v})),\varphi_h-\varphi\big)_T\\
      &\qquad+\sum_{F\in\f_T}\big(\eta_T\vec{v}_{T\mid F}{\cdot}\normal_{T,F},(\varphi_h-\varphi)_{\mid F}\big)_F\bigg)\\
      &=-\big(\Div_h(\eta(\vTh-\vec{v})),\varphi_h-\varphi\big)_{\Omega}\\
      &\qquad+\sum_{T\in\t_h}\sum_{F\in\f_T}\varepsilon_{T,F}\big(\eta_T\vec{v}_{T\mid F}{\cdot}\normal_F-\vFn,(\varphi_h-\varphi)_{\mid F}\big)_F,
    \end{split}
  \end{equation*}
  where, to pass from the first to the second line, we have performed cell-by-cell integration by parts and used that $\sum_{T\in\t_h}\big\langle(\eta\vec{v})_{\mid\partial T}{\cdot}\normal_{\partial T},(\varphi_h-\varphi)_{\mid\partial T}\big\rangle_{\partial T}=0$ (as a consequence of the fact that $\vec{v}\in\Hdiv$ and that $\varphi_h,\varphi\in H^1_0(\Omega)$) and, to pass from the second to the third line, we have again used the fact that $(\varphi_h-\varphi)$ is single-valued at interfaces and vanishes on the boundary of the domain to subtract the zero contribution $\sum_{T\in\t_h}\sum_{F\in\f_T}\varepsilon_{T,F}\big(\vFn,(\varphi_h-\varphi)_{\mid F}\big)_F$. By Cauchy--Schwarz inequality, combined with a continuous trace inequality (along with mesh regularity), and the fact that $|\vh|_{\Div,h}\leq c_M\eta_\sharp$ for all $h\in\Hindex$ and $\|\Div(\eta\vec{v})\|_0\lesssim\lim\inf_{h\to 0}|\vh|_{\Div,h}$ (by weak convergence of $(\Dph(\vh))_{h\in\Hindex}$ to $\Div(\eta\vec{v})$ in $\LL$ and the uniform bound~\eqref{eq:bound.d}), we infer that
  \begin{equation} \label{eq:expr}
    \|\eta^{\frac12}(\Grad\varphi_h-\Grad\varphi)\|_0^2\lesssim\eta_\sharp\big(\|\varphi_h-\varphi\|_0+h\|\Grad\varphi_h-\Grad\varphi\|_0\big).
  \end{equation}
  Since $\|\Grad\varphi_h\|_0\leq\eta_\flat^{-\frac12}\|\eta^{\frac12}\vTh\|_0$ (by $\vec{L}^2_\eta(\Omega)$-orthogonality of the decomposition~\eqref{eq:hodge2}), $\|\eta^{\frac12}\vTh\|_0\leq c_{W,1}\sqrt{3}\kappa_\eta^{\frac12}c_M\eta_\sharp^{\frac12}$ (by~\eqref{thm:max.ineq.eq1},~\eqref{eq:nocu},~and $\|\vh\|_{\vec{X}_{\vec{\tau}},h}\leq c_M\eta_\sharp^{\frac12}$), and $\|\Grad\varphi\|_0\leq\lim\inf_{h\to 0}\|\Grad\varphi_h\|_0$ (since $\Grad\varphi_h\rightharpoonup\Grad\varphi$ weakly in $\bLL$), we infer by the triangle inequality that
  $$\|\Grad\varphi_h-\Grad\varphi\|_0\leq 2c_{W,1}\sqrt{3}\kappa_\eta c_M.$$
  Combining this uniform boundedness result to the fact that $\varphi_h\to\varphi$ strongly in $\LL$ to pass to the limit $h\to 0$ in~\eqref{eq:expr}, we conclude that $\Grad\varphi_h\to\Grad\varphi$ strongly in $\bLL$.

  Adapting the latter arguments, there also holds
  \begin{equation*}
    \begin{split}
      \eta_\sharp\|\eta^{-\frac12}(\Curl\vec{\psi}_h-\Curl\vec{\psi})\|_0^2&=\big(\vec{v}_h-\vec{v},\Curl(\vec{\psi}_h-\vec{\psi})\big)_{\Omega}\\
      &=\big(\Curl_h(\vec{v}_h-\vec{v}),\vec{\psi}_h-\vec{\psi}\big)_{\Omega}\\
      &\qquad+\sum_{T\in\t_h}\sum_{F\in\f_T}\varepsilon_{T,F}\big(\vec{v}_{T\mid F}{\times}\normal_F-\vFt,\normal_F{\times}((\vec{\psi}_h-\vec{\psi})_{\mid F}{\times}\normal_F)\big)_F,
    \end{split}
  \end{equation*}
  where we have used that $\vh\in\dfweber$, $\vec{v}\in\Hzcurl$, and $\vec{\psi}_h,\vec{\psi}\in\vec{H}^1(\Omega)$. Additionally invoking~\eqref{eq:withoutproj}, this implies that
  $$\eta_\sharp\|\eta^{-\frac12}(\Curl\vec{\psi}_h-\Curl\vec{\psi})\|_0^2\lesssim\big(\|\vec{\psi}_h-\vec{\psi}\|_0+h|\vec{\psi}_h-\vec{\psi}|_1\big),$$
  which eventually yields, since $|\vec{\psi}_h-\vec{\psi}|_1\leq 2C_{\Omega,1}c_{W,1}\sqrt{3}\kappa_\eta^{\frac12}c_M$ (here, we additionally used the estimate $|\vec{\psi}_h|_1\leq C_{\Omega,1}\|\Curl\vec{\psi}_h\|_0$ and the fact that $\vec{\psi}_h\rightharpoonup\vec{\psi}$ weakly in $\vec{H}^1(\Omega)$), and $\vec{\psi}_h\to\vec{\psi}$ strongly in $\bLL$, that $\Curl\vec{\psi}_h\to\Curl\vec{\psi}$ strongly in $\bLL$.

  Thus, along a subsequence (not relabelled), $\vec{v}_h\to\vec{v}$ strongly in $\bLL$, yielding the conclusion.
\end{proof}

\subsection{Compactness in $\underline{\mathbbmsl{X}}_{h,n}^{\ell}$}

Recall the definitions~\eqref{eq:dweber} of the discrete space $\dsweber$, and~\eqref{eq:senocu}--\eqref{eq:senodi} of the hybrid semi-norms $|{\cdot}|_{\Curl,h}$ and $|{\cdot}|_{\Div,h}$. Leveraging the second hybrid Weber inequality from Theorem~\ref{thm:sweber}, we infer that
\begin{equation} \label{eq:nodi}
  \|\vh\|_{\vec{X}_n,h}^2\defi\eta_\flat^{-1}|\vh|_{\Div,h}^2+\eta_\sharp|\vh|_{\Curl,h}^2+\eta_\flat^{-1}\sum_{i=1}^{\beta_1}\big|\big(\gamma_{n,h}(\vh),1\big)_{\Sigma_i}\big|^2
\end{equation}
defines a norm on $\dsweber$, which can be proven uniformly (in $h$) equivalent to the $\|{\cdot}\|_{\vec{X},h}$-norm~\eqref{eq:no}: there is $c_n>0$, independent of $h$ and $\eta$, such that
$$(c_n\kappa_\eta)^{-1}\|\vh\|_{\vec{X},h}^2\leq\|\vh\|_{\vec{X}_n,h}^2\leq c_n\kappa_\eta\|\vh\|_{\vec{X},h}^2\qquad\forall\vh\in\dsweber.$$

\begin{theorem}[Maxwell compactness in $\dsweber$] \label{th:max2}
  Let $\big(\mathcal{M}_h\big)_{h\in\Hindex}$ be a regular mesh family as defined in Section~\ref{sse:pol.mesh}. Let $(\vh)_{h\in\Hindex}$ be a sequence of elements of $\dsweber$ for which there exists a real number $c_M>0$ (independent of $h$) such that $\eta_\sharp^{-\frac12}\|\vh\|_{\vec{X}_n,h}\leq c_M$ for all $h\in\Hindex$.
  Then, there exists an element $\vec{v}\in\csweber$ such that, along a subsequence, as $h \to 0$,
  \begin{enumerate}[label=(\alph*),leftmargin=2\parindent]
    \item $\vTh \rightarrow \vec{v}$ strongly in $\bLL$;
    \item $\Cmh(\vh)\rightharpoonup\Curl\vec{v}$ weakly in $\bLL$;
    \item $\Dph(\vh)\rightharpoonup\Div(\eta\vec{v})$ weakly in $\LL$;
    \item $\vec{\gamma}_{\vec{\tau},h}(\vh)\rightharpoonup\vec{v}_{\mid\Gamma}{\times}\normal$ weakly in $\vec{H}^{-\frac12}(\Gamma)$.
  \end{enumerate}
\end{theorem}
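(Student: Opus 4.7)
The plan is to mirror the proof of Theorem~\ref{th:max1}, exchanging the roles of the tangential and normal boundary conditions and invoking the second Weber inequality~\eqref{thm:max.ineq.eq2} and the second Helmholtz--Hodge decomposition~\eqref{eq:helm2} in place of their first counterparts. As in the tangential case, I would organize the proof in three steps.

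First (weak convergence), the bound $\|\vh\|_{\vec{X}_n,h}\le c_M\eta_\sharp^{\frac12}$ together with the estimates~\eqref{eq:bound.c},~\eqref{eq:bound.d} and~\eqref{thm:max.ineq.eq2} yields uniform boundedness of $(\vTh)$, $(\Cmh(\vh))$ in $\bLL$ and of $(\Dph(\vh))$ in $\LL$. By weak compactness and extraction, one obtains limits $\vec{v}\in\bLL$, $\vec{c}\in\bLL$, and $d\in\LL$. The identifications $\vec{c}=\Curl\vec{v}$ and $d=\Div(\eta\vec{v})$ are obtained by the same integration-by-parts argument as in~\eqref{eq:parts} and~\eqref{eq:parts2}, exploiting the consistency of $\Cmh$ and $\Dph$ against smooth test fields. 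The new ingredient is that, since $\vh\in\dsweber$ means $\gamma_{n,h}(\vh)\equiv 0$, the boundary contribution in~\eqref{eq:parts2} vanishes for all $z\in C^\infty(\overline{\Omega})$, which directly yields $\vec{v}\in\Hzdiv$. To prove item (d), one tests~\eqref{eq:parts} against $\vec{z}\in\vec{C}^\infty(\overline{\Omega})$ without the cancellation of the boundary term (which no longer vanishes since now only the normal trace is constrained): the ``boundary remainder'' then equals $\sum_{T}\sum_{F\in\f_T\cap\f_h^\partial}\varepsilon_{T,F}(\vFt,\normal_F{\times}(\vec{z}_{\mid F}{\times}\normal_F))_F=(\vec{\gamma}_{\vec{\tau},h}(\vh),\vec{z}_{\mid\Gamma}{\times}\normal)_\Gamma$, and passing to the limit against $\langle\vec{v}_{\mid\Gamma}{\times}\normal,\vec{z}_{\mid\Gamma}{\times}\normal\rangle_\Gamma$ yields the weak convergence in $\vec{H}^{-\frac12}(\Gamma)$.

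Second (characterization of the limit), I would decompose each $\vTh$ according to~\eqref{eq:helm2}: there exist $\varphi_h\in H^1(\Omega)\cap L^2_0(\Omega)$, $\vec{\psi}_h\in\vec{H}^1(\Omega)\cap\Hzcurl$, and $\vec{w}_h\in\sharmonic$ with $\|\vec{\psi}_h\|_1\le C_{\Omega,2}\|\Curl\vec{\psi}_h\|_0$, giving $\vTh=\Grad\varphi_h+\frac{\eta_\sharp}{\eta}\Curl\vec{\psi}_h+\vec{w}_h$. The $\vec{L}^2_\eta(\Omega)$-orthogonality of the decomposition together with the uniform $\bLL$-bound on $(\vTh)$ delivers uniform bounds on the three pieces. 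Finite-dimensionality of $\sharmonic$ and Bolzano--Weierstra\ss{} yield strong convergence $\vec{w}_h\to\vec{w}\in\sharmonic$; the Poincar\'e--Steklov inequality on $\varphi_h\in H^1(\Omega)\cap L^2_0(\Omega)$ and the $\vec{H}^1$-bound on $\vec{\psi}_h$ then produce, along extractions, weak limits $\varphi\in H^1(\Omega)\cap L^2_0(\Omega)$ and $\vec{\psi}\in\vec{H}^1(\Omega)\cap\Hzcurl$, and uniqueness of weak limits gives $\vec{v}=\Grad\varphi+\frac{\eta_\sharp}{\eta}\Curl\vec{\psi}+\vec{w}\in\csweber$.

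Third (strong convergence), by Rellich's theorem $\varphi_h\to\varphi$ strongly in $\LL$ and $\vec{\psi}_h\to\vec{\psi}$ strongly in $\bLL$. To upgrade the $\Grad\varphi_h$ convergence, I would compute $\|\eta^{\frac12}(\Grad\varphi_h-\Grad\varphi)\|_0^2=(\eta(\vTh-\vec{v}),\Grad(\varphi_h-\varphi))_\Omega$ by the $\vec{L}^2_\eta$-orthogonality of~\eqref{eq:helm2}; cell-wise integration by parts, using $\vec{v}\in\Hzdiv$ and $\gamma_{n,h}(\vh)\equiv 0$ to cancel the boundary contributions (this is where the fully-normal condition enters, replacing the use of $\varphi_h,\varphi\in H^1_0(\Omega)$ in Theorem~\ref{th:max1}), produces the bound $\lesssim\eta_\sharp(\|\varphi_h-\varphi\|_0+h\|\Grad(\varphi_h-\varphi)\|_0)$ as in~\eqref{eq:expr}, and uniform boundedness of $\|\Grad\varphi_h\|_0$ via~\eqref{thm:max.ineq.eq2} allows passing to the limit. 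The dual computation for $\Curl\vec{\psi}_h$ is $\eta_\sharp\|\eta^{-\frac12}(\Curl\vec{\psi}_h-\Curl\vec{\psi})\|_0^2=(\vTh-\vec{v},\Curl(\vec{\psi}_h-\vec{\psi}))_\Omega$; the key step here is a cell-wise integration by parts combined with the rotated tangential jump estimate~\eqref{eq:withoutproj}, noting that $\vec{\psi}_h,\vec{\psi}\in\Hzcurl$ now kills the boundary contribution (whereas in Theorem~\ref{th:max1} it was the other way around, with $\vh\in\dfweber$ killing the boundary term). I expect the main obstacle to be bookkeeping these consistent boundary cancellations in the rotational identity of Step~3: one must use the $\Hzcurl$-regularity of $\vec{\psi}_h,\vec{\psi}$ to annihilate the face integrals over $\f_h^\partial$ in the integration by parts of $(\vTh-\vec{v},\Curl(\vec{\psi}_h-\vec{\psi}))_\Omega$, and carefully invoke~\eqref{eq:withoutproj} on interface faces only. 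Once this is done, collecting the three strong limits yields $\vTh\to\vec{v}$ strongly in $\bLL$ along a subsequence, concluding the proof.
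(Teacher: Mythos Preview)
Your proposal is correct and follows exactly the approach the paper itself takes: the paper's proof of Theorem~\ref{th:max2} consists of a single sentence stating that the three steps are essentially identical to those of Theorem~\ref{th:max1} up to inverting the roles of the rotational- and divergence-related parts, and you have spelled out precisely that inversion.

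One small clarification on your last paragraph: in Step~3, the estimate~\eqref{eq:withoutproj} is a local estimate over \emph{all} faces $F\in\f_T$, not just interface faces. You do not restrict it; rather, the added/subtracted term $\sum_{T}\sum_{F\in\f_T}\varepsilon_{T,F}\big(\vFt,\normal_F{\times}((\vec{\psi}_h-\vec{\psi})_{\mid F}{\times}\normal_F)\big)_F$ vanishes globally (interior contributions cancel by single-valuedness of $\vec{\psi}_h-\vec{\psi}\in\vec{H}^1(\Omega)$, boundary contributions vanish because $\vec{\psi}_h,\vec{\psi}\in\Hzcurl$ kills the tangential factor), after which~\eqref{eq:withoutproj} is applied on every face of every cell exactly as in Theorem~\ref{th:max1}.
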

\begin{proof}
  The proof proceeds in three steps, as in the continuous case (cf.~Proposition~\ref{pr:max2}). For the sake of brevity, since, up to inverting the roles of the rotational- and divergence-related parts in the proof, the three steps are essentially identical to those from Theorem~\ref{th:max1}, we do not detail them.
\end{proof}

\section*{Acknowledgments}
The work of the authors is supported by the action ``Pr\'eservation de l'emploi de R\&D'' from the ``Plan de Relance'' recovery plan of the French State.
Support from the LabEx CEMPI (ANR-11-LABX-0007) is also acknowledged.

\appendix

\section{Continuity modulus of $\vec{\Div}^{-1}$ on star-shaped cells} \label{ap:nim}

\begin{lemma}[Continuity modulus of $\vec{\Div}^{-1}$] \label{lemma:ineq.decomp}
  Let $\ell\in\Natu$. Let $T\in\t_h$, and assume that $T$ is star-shaped with respect to $\vec{x}_T$.
  Then, for all $d\in\Poly^{\ell-1}(T)$,
  \begin{equation} \label{lemma:ineq.decomp.eq}
    \|\vec{\Div}^{-1}d\|_{0,T} \leq \frac23 h_T\|d\|_{0,T}.
  \end{equation}
\end{lemma}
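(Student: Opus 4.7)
My plan is to construct $\vec{\Div}^{-1} d$ explicitly as a Poincar\'e-type homotopy integrated along radial rays emanating from $\vec{x}_T$, and then to estimate it using Cauchy--Schwarz in the radial variable together with the star-shapedness assumption.

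First, I would set
$$\vec{v}(\vec{x}) \defi (\vec{x}-\vec{x}_T) \int_0^1 t^2\, d\bigl(\vec{x}_T + t(\vec{x}-\vec{x}_T)\bigr)\, dt,$$
which is well-defined on $\overline{T}$ precisely because $T$ is star-shaped with respect to $\vec{x}_T$. Integrating the pointwise identity $\frac{d}{dt}\bigl[t^3\, d(\vec{x}_T + t(\vec{x}-\vec{x}_T))\bigr] = 3 t^2 d + t^3 (\vec{x}-\vec{x}_T)\cdot\Grad d$ over $[0,1]$ then yields $\Div \vec{v} = d$. Writing $d \in \Poly^{\ell-1}(T)$ as a sum $d=\sum_{k=0}^{\ell-1} d_k$ of polynomials homogeneous in $\vec{x}-\vec{x}_T$ of respective degrees $k$, a direct computation shows that $\vec{v} = \sum_k (k+3)^{-1} (\vec{x}-\vec{x}_T)\, d_k$, so $\vec{v} \in \kRT{\ell}$. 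Since $\Div: \kRT{\ell}\to\Poly^{\ell-1}(T)$ is an isomorphism, this identifies $\vec{v} = \vec{\Div}^{-1} d$.

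Next, I would bound $\|\vec{v}\|_{0,T}$ in radial coordinates $\vec{x} = \vec{x}_T + r \vec{\sigma}$ with $\vec{\sigma} \in S^2$ and $r \in [0,R(\vec{\sigma})]$, where $R(\vec{\sigma}) \defi \sup\{r\geq 0 : \vec{x}_T + r\vec{\sigma} \in \overline{T}\} \leq h_T$. The change of variable $s = tr$ in the defining integral recasts $\vec{v}$ as
$$\vec{v}(\vec{x}_T + r\vec{\sigma}) = r^{-2}\,\vec{\sigma} \int_0^r s^2\, d(\vec{x}_T + s\vec{\sigma})\, ds.$$
Splitting $s^2 = s\cdot s$ and applying Cauchy--Schwarz in $[0,r]$ gives $\bigl|\int_0^r s^2 d\, ds\bigr|^2 \leq \tfrac{r^3}{3}\int_0^r s^2 |d|^2\, ds$, and a subsequent Fubini swap on $[0,R(\vec{\sigma})]$ produces the radial estimate $\int_0^{R(\vec{\sigma})} |\vec{v}|^2 r^2\, dr \leq \tfrac{R(\vec{\sigma})^2}{6}\int_0^{R(\vec{\sigma})} |d|^2 s^2\, ds$. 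Integrating over $S^2$ and exploiting that $T$ is covered (by star-shapedness) by the radial slices $\{\vec{x}_T + r\vec{\sigma} : 0 \leq r \leq R(\vec{\sigma})\}$ will then yield $\|\vec{v}\|_{0,T}^2 \leq \tfrac{h_T^2}{6}\|d\|_{0,T}^2$; since $1/\sqrt{6} < 2/3$, the claim~\eqref{lemma:ineq.decomp.eq} follows.

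The main obstacles are, in order: (i) verifying that the Poincar\'e homotopy $\vec{v}$ lies in the Koszul complement $\kRT{\ell}$ rather than in a larger space---this hinges on star-shapedness (so that the integrand is defined on all of $\overline{T}$) together with the Euler-type identity $\Div((\vec{x}-\vec{x}_T) d_k) = (k+3) d_k$ applied homogeneous degree by homogeneous degree; (ii) carrying out the radial change of variable cleanly enough that $R(\vec{\sigma}) \leq h_T$ can be substituted before the spherical integration. Once the radial representation for $\vec{v}$ is in hand, the Cauchy--Schwarz/Fubini manipulation is routine.
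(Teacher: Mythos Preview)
Your proof is correct and takes a genuinely different route from the paper's.

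The paper argues algebraically: writing $\vec{\Div}^{-1}d = r(\vec{x}-\vec{x}_T)$ with $r\in\Poly^{\ell-1}(T)$, it expands $d = 3r + \Grad r\cdot(\vec{x}-\vec{x}_T)$, pairs with $\overline{r}$, integrates by parts, and uses $(\vec{x}-\vec{x}_T)\cdot\normal_{\partial T}\geq 0$ (the star-shapedness hypothesis, entering as a boundary sign) to obtain $\|r\|_{0,T}\leq\tfrac{2}{3}\|d\|_{0,T}$, whence $\|\vec{\Div}^{-1}d\|_{0,T}\leq h_T\|r\|_{0,T}\leq\tfrac{2}{3}h_T\|d\|_{0,T}$. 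Your approach is constructive: you realise $\vec{\Div}^{-1}d$ as the Poincar\'e homotopy integral, pass to polar coordinates about $\vec{x}_T$, and run a Cauchy--Schwarz/Fubini argument in the radial variable. Here star-shapedness enters not as a boundary sign but as the fact that $T$ is \emph{exactly} parametrised (up to a null set) by $\{\vec{x}_T+r\vec{\sigma}:\vec{\sigma}\in S^2,\,0\leq r\leq R(\vec{\sigma})\}$, so that both $\|\vec{v}\|_{0,T}^2$ and $\|d\|_{0,T}^2$ coincide with the spherical integrals. Your method yields the sharper constant $h_T/\sqrt{6}$; the paper's argument is shorter and avoids coordinates entirely. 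One small inaccuracy in your write-up: since $d$ is a polynomial, the homotopy integral is defined on all of $\Real^3$ and $\vec{v}\in\kRT{\ell}$ is a purely algebraic fact---star-shapedness is needed only in the $L^2$ estimate, not in the construction or identification of $\vec{v}$.
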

\begin{proof}
  The mapping $\Div:\kRT{\ell}\to\Poly^{\ell-1}(T)$ being an isomorphism, let $r\in\Poly^{\ell-1}(T)$ be the unique solution to
  $$d=\Div\big(r(\vec{x}-\vec{x}_T)\big).$$
  Using a simple vector calculus identity to expand the right-hand side, there holds
  $$d=3r+\Grad r{\cdot}(\vec{x}-\vec{x}_T).$$
  Now, multiplying both sides by $\overline{r}$, integrating over $T$, and taking the real part, we get
  \begin{align*}
    \mathfrak{R}\big((d,r)_T\big)
    &= 3 \|r\|_{0,T}^2 + \frac{1}{2} \big(\Grad (|r|^2),(\vec{x}-\vec{x}_T)\big)_T \\
    &= 3 \|r\|_{0,T}^2 - \frac{3}{2} \|r\|_{0,T}^2 + \frac12 \big(|r|^2,(\vec{x}-\vec{x}_T){\cdot}\normal_{\partial T}\big)_{\partial T} \\
    &\geq \frac32 \|r\|_{0,T}^2,
  \end{align*}
  where we have used an integration by parts formula to pass to the second line, and the fact that $T$ is star-shaped with respect to $\boldsymbol{x}_T$ to conclude. 
  Finally, applying the Cauchy--Schwarz inequality to $(d,r)_T$, we infer
  \begin{equation*}
    \|\vec{\Div}^{-1}d\|_{0,T}=\|r(\vec{x}-\vec{x}_T)\|_{0,T}\leq h_T\|r\|_{0,T} \leq \frac23 h_T \|d\|_{0,T},
  \end{equation*}
  which proves~\eqref{lemma:ineq.decomp.eq}.
\end{proof}

\ifJournal

\bibliographystyle{ws-m3as}

\else

\bibliographystyle{plain}

\fi

{\small
\bibliography{hybrid_weber}
}

\end{document}